\newcommand{\si}{\sigma}
\newcommand{\la}{\lambda}
\newcommand{\pa}{\partial}
\newcommand{\al}{\alpha}
\newcommand{\be}{\beta}
\newcommand{\de}{\delta}
\newcommand{\ga}{\gamma}
\newcommand{\ka}{\kappa}
\newcommand{\om}{\omega}
\newcommand{\ve}{\varepsilon}
\newcommand{\ze}{\zeta}
\newcommand{\cd}{\cdot}
\newcommand{\cs}{\operatornamewithlimits{cs}}
\newcommand{\R}{{\mathbb R}}
\newcommand{\Z}{{\mathbb Z}}
\newcommand{\N}{{\mathbb N}}
\newcommand{\cE}{{\cal E}}
\newcommand{\tm}{\widetilde{m}}
\newcommand{\cT}{{\cal{T}}}
\renewcommand{\(}{\left(}
\renewcommand{\)}{\right)}
\renewcommand{\th}{\theta}
\newtheorem{theorem}{\bf Theorem}[section]
\newtheorem{lemma}[theorem]{\bf Lemma}
\newtheorem{proposition}[theorem]{\bf Proposition}
\theoremstyle{remark}
  \newtheorem{remark}[theorem]{\sc Remark}
\theoremstyle{definition}
  \newtheorem{example}[theorem]{\sc Example}
\numberwithin{equation}{section}
\begin{document}

\title{
On the energy decay estimates for the weak dissipative wave equations with oscillating coefficient}
\author{
Fumihiko Hirosawa\!
\footnote{
Department of Mathematical Sciences, Faculty of Science, Yamaguchi University, Japan; 
e-mail: hirosawa@yamaguchi-u.ac.jp}
\;\; and \;\;
Daichi Nakajima\!
\footnote{Graduate School of Sciences and Technology for Innovation, Yamaguchi University, Japan; 
e-mail: d008vbu@y-u.jp}
}

\date{}

\maketitle

\begin{abstract}
It is known that the asymptotic behavior of time-dependent dissipative coefficient in the Cauchy problem of dissipative wave equation dominates the energy decay estimate. 
In particular, it is important to study the case where the dissipative coefficient behave like $1/t$ as $t$ goes to infinity, which is called weak dissipation, because its order is close to the critical case of decay and non-decay. 
In this case, an oscillating perturbation of weak dissipation can give a crucial effect on the energy decay estimate, but the analysis is very difficult compared to the case without oscillations. 
In this paper, we develop a method recently introduce by Ghisi-Gobbino that has contributed to a precise analysis for dissipative wave equations with oscillating weak dissipation, and consider the effect of the oscillations, which is more general and close to the critical case. 
Furthermore, we study the effect of the smoothness of the initial data on the energy decay estimates. 
\end{abstract}

\bigskip

\begin{center}
\textbf{Keywords:} 
dissipative wave equation, 
energy decay, 
weak dissipation, 
fast oscillations, 
Gevrey class
\end{center}

\bigskip
\begin{center}
\textbf{Mathematics Subject Classification 2020 (MSC2020):} 
35L15, 35L05, 35B40
\end{center}

\bigskip

%
%
\section{Introduction}
%
%

%
%

Let us consider the following Cauchy problem for dissipative wave equation: 
\begin{equation}\label{u}
\begin{dcases}
  \(\pa_t^2 - \Delta + b \pa_t\) u(t,x) = 0, &
  (t,x) \in (0,\infty) \times \R^d,\\
  u(0,x)=u_0(x),\;\; \pa_t u(0,x) = u_1(x), 
  & x \in \R^d, 
\end{dcases}
\end{equation}
where $d \in \N$, 
$b=b(t)$ is real valued and $\Delta=\sum_{j=1}^d \pa_{x_j}^2$. 
Then, the total energy of the solution to \eqref{u} at $t$ is defined by 
\begin{equation}
  E(t)=E(t;u):= \|\nabla u(t,\cd)\|^2 + \|\pa_t u(t,\cd)\|^2, 
\end{equation}
where $\nabla = {}^t(\pa_{x_1},\ldots,\pa_{x_d})$ and 
$\|\;\;\|$ denotes the usual $L^2$ norm in $\R^d$. 
Since our interest is the energy estimates for linear Cauchy problem 
\eqref{u}, the existence of a solution is assumed hereafter. 

The following equality is immediately obtained by differentiating $E(t;u)$ with respect to $t$: 
\begin{equation}\label{dE}
  \frac{d}{dt}E(t;u) = -2b(t)\|\pa_t u(t,\cd)\|^2. 
\end{equation}
\eqref{dE} implies that $E(t;u)$ is monotone decreasing if $b(t)>0$, 
moreover, $\int^\infty_0 b(t)\,dt = \infty$ must be hold so that the energy decays as $t\to\infty$, that is, $\lim_{t\to\infty}E(t;u)=0$. 
Therefore, it is natural question what order $E(t;u)$ decays as $t\to\infty$ for $b$ that satisfies $b(t) \ge m/(1+t)$ with $m>0$. 

If $b$ is a positive constant, then the following energy decay estimate holds:
\begin{equation}\label{Eest0}
  E(t;u) \le C (1+t)^{-\tm}\(E(0;u) + \|u_0(\cd)\|^2\)
\end{equation}
for $\tm =1$, where $C$ is a positive constant depends only on $b$ (\cite{M76}). 
Hereafter, we will represent it by $C$ if a positive constant exists. 
If $b$ is given by 
\begin{equation}\label{b0}
  b(t)=\frac{m}{1+t} 
\end{equation}
for $m>0$, which is called weak dissipation, \eqref{Eest0} holds for 
\begin{equation}\label{tm0}
  \tm = \min\{2,m\}. 
\end{equation}
The problem of whether \eqref{Eest0} holds for what $\tm$ in the intermediate cases between constant dissipation and weak dissipation has been studied in \cite{W07} and elsewhere, and these are summarized in \cite{GG25}. 
In this paper, we are particularly interested in the case where $b(t)$ behaves like \eqref{b0}, we shall also call it weak dissipation, so let us explain a bit more detail below. 

If $b$ is given by \eqref{b0} with $m=2$, then we have the following decay estimate: 
\begin{equation}\label{Eest_m=2}
  \|\nabla  u(t,\cd)\|^2 
+ \left\|\pa_t u(t,\cd) + (1+t)^{-1}u(t,\cd)\right\|^2
 =(1+t)^{-2} \(\|\nabla  u_0(\cd)\|^2 + \|u_1(\cd) + u_0(\cd)\|^2\), 
\end{equation}
that is derived from the energy conservation 
$E(t;w)=E(0;w)$ for $w:=(1+t)u$. 
Here we notice that not only $(u_0,u_1) \in \dot{H}^1 \times L^2$, that is $E(0;u)<\infty$, but also $(u_0,u_1) \in H^1 \times L^2$ seems to be required for the decay estimate \eqref{Eest0}, where $H^s$ and $\dot{H}^s$ denote the usual Sobolev and homogeneous Sobolev spaces of order $s \ge 0$ in $\R^d$, respectively. 
For more general $b \in C^1([0,\infty))$, it was proved in \cite{M77} that 
\eqref{Eest0} also holds for any $\tm$ that satisfies both the following conditions: 
\begin{equation}\label{bb'}
  b(t) \ge \tm(1+t)^{-1}
  \;\;\text{ \textit{and} }\;\;
  (\tm-1)(\tm-2)-(\tm-1)(1+t)b(t)-(1+t)^2 b'(t) \ge 0. 
\end{equation}
Indeed, \eqref{bb'} provides \eqref{tm0} if $b$ is given by \eqref{b0}. 
Actually, \eqref{bb'} is also applicable if $b(t)$ is oscillating, however, the influence of non-constant $b(t)$ is crucial in determining of $\tm$. 
Since we are interested in the case that $b(t)$ has oscillations and to focus on this, we suppose that $b(t)$ is represented by
\begin{equation}\label{de(t)}
  b(t) = \frac{m}{1+t} + \de(t), 
\end{equation}
where $\de(t)$ denotes an oscillatory perturbation, and we shall derive the conditions for $\de(t)$ so that the same energy decay estimate \eqref{Eest0} holds as when $\de(t)=0$. 

Here we introduce two results that directly motivated the main theorems of this paper. 

\begin{theorem}[\cite{GH25,HW08}]\label{ThmGH}
Let $0<m<1$ and $\de \in C^k([0,\infty))$ with $k \ge 1$. 
If there exist $\be \in \R$ and $\ga \ge 0$ satisfying 
\begin{equation}\label{ThmGH-bega}
  \be \ge \frac{1-k\ga}{k+1}
\end{equation}
such that 
\begin{equation}\label{ThmGH-e1}
  \max_{0\le l \le k}
  \sup_{t \ge 0}\left\{
  (1+t)^{\be(l+1)} \left|\de^{(l)}(t)\right|
  \right\}<\infty
\end{equation}
and
\begin{equation}\label{ThmGH-e2}
  \sup_{t \ge 0}\left\{
  (1+t)^{\ga}
  \int^\infty_t\left|\int^\infty_s \de(\tau)\,d\tau\right|\,ds
  \right\}<\infty, 
\end{equation}
then \eqref{Eest0} holds for $\tm=m$. 
\end{theorem}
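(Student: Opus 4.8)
\emph{The plan is to pass to the Fourier side.} Writing $v=v(t,\xi)=\hu(t,\xi)$ and $\la=|\xi|$, the Cauchy problem \eqref{u} becomes the family of ordinary differential equations $v''+b(t)v'+\la^2v=0$, and by Plancherel's theorem $E(t;u)=\int_{\R^d}\bigl(|v'|^2+\la^2|v|^2\bigr)\,d\xi$, where $v'=\pa_t v$. I would separate the phase space by the curve $\la(1+t)=N$ (for a large constant $N$) into a dissipative zone $\{\la(1+t)\le N\}$ and a hyperbolic zone $\{\la(1+t)\ge N\}$, writing $\tx$ for the entry time defined by $1+\tx=N/\la$ (with $\tx=0$ for $\la\ge N$). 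The estimate \eqref{Eest0} with $\tm=m$ would then follow from zonewise bounds on the energy density $\cE(t,\xi):=|v'|^2+\la^2|v|^2$, integrated in $\xi$ and glued along $\tx$; the extra term $\|u_0\|^2$ on the right of \eqref{Eest0} accounts for the low frequencies, where $\la^2|v(0)|^2$ is not controlled by $E(0;u)$ alone.

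In the dissipative zone the equation is overdamped and $v$ is essentially non-oscillatory. Here I would use the classical weak-dissipation estimates: since $b\ge m(1+t)^{-1}$ and $\int_0^t\de$ stays bounded (a consequence of \eqref{ThmGH-e2}, which forces $\int_0^\infty\de$ to converge), the amplitude $|v|$ remains controlled by the data while $|v'|$ decays like $(1+t)^{-m}$. The oscillation $\de$ is harmless in this regime because there is no fast solution oscillation for it to resonate with; it can be absorbed into the bounded factor $\exp\bigl(-\tfrac12\int_0^t\de\bigr)$.

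The hyperbolic zone is the crux. I would use the reduction $v=\exp\bigl(-\tfrac12\int_0^t b\bigr)w$, under which $w''+(\la^2+V)w=0$ with $V=-\tfrac{b^2}4-\tfrac{b'}2=\tfrac{m(2-m)}{4(1+t)^2}-\tfrac{m\de}{2(1+t)}-\tfrac{\de^2}4-\tfrac{\de'}2$ and $\exp\bigl(-\tfrac12\int_0^t b\bigr)\asymp(1+t)^{-m/2}$. Diagonalizing by $w_\pm=w'\pm\i\la w$ gives $w_\pm'=\pm\i\la w_\pm+\tfrac{\i V}{2\la}(w_+-w_-)$, so that $\cE_w:=|w_+|^2+|w_-|^2\asymp|w'|^2+\la^2|w|^2$ satisfies $\tfrac{d}{dt}\cE_w=-\tfrac{2V}{\la}\,\mathrm{Im}(w_+\bar w_-)$. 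The point is that $w_+\bar w_-$ oscillates like $\e^{2\i\la t}$, while $V$ oscillates through $\de$. If I can show that $\int_{\tx}^\infty\tfrac{V}{\la}\,\mathrm{Im}(w_+\bar w_-)\,dt$ is bounded uniformly in $\la$, then Gronwall's inequality yields $\cE_w(t)\le C\,\cE_w(\tx)$, and tracing back the reduction gives $\cE(t,\xi)\le C(1+t)^{-m}\cE(\tx,\xi)$, which is the required hyperbolic-zone decay.

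\emph{The main obstacle is precisely this uniform-in-$\la$ bound}, and it is where the hypotheses \eqref{ThmGH-e1}--\eqref{ThmGH-e2} enter. The benign term $\tfrac{m(2-m)}{4(1+t)^2}$ contributes $O(N^{-1})$, so only the oscillatory $\de$-terms---above all $-\tfrac{\de'}2$---are dangerous. I would estimate the remaining integral by repeated integration by parts, playing the two oscillations against each other: in the region where the solution oscillates faster than $\de$ one integrates by parts against $\e^{2\i\la t}$, gaining a factor $\la^{-1}$ at the cost of one derivative of $\de$, and \eqref{ThmGH-e1} permits this up to $k$ times; in the region where $\de$ oscillates faster one integrates by parts against $\de$ using its primitives $\int_t^\infty\de$ and $\int_t^\infty\!\int_s^\infty\de$, whose decay is supplied by \eqref{ThmGH-e2}. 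The delicate zone is the resonance $2\la\approx$ (instantaneous frequency of $\de$); there the balance between the gain $(1+t)^{-\be}$ per derivative, spread over the $k+1$ factors in \eqref{ThmGH-e1}, and the gain $(1+t)^{-\ga}$ from the double primitive in \eqref{ThmGH-e2} is exactly the arithmetic of the hypothesis $\be\ge\frac{1-k\ga}{k+1}$, which renders the resonant contribution bounded independently of $\la$. I expect controlling this resonant crossover to be the hard part; the boundary terms at $\tx$ (whose $\de$-primitive factors are again bounded via \eqref{ThmGH-e2}) and the gluing with the dissipative zone should be comparatively routine.
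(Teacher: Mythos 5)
First, a point of reference: the paper does not prove Theorem \ref{ThmGH} --- it is quoted as background from \cite{GH25,HW08} --- so there is no in-paper proof to compare against. Measured against the proof in those references, your overall strategy is the right one and essentially the standard one: partial Fourier transform, a zone decomposition of the $(t,\xi)$-space, classical overdamped estimates in the dissipative zone (where your remark that \eqref{ThmGH-e2} forces $\int_0^t\de$ to converge, so that $\exp(-\int_0^t b)\asymp(1+t)^{-m}$, is the correct fix for the fact that $b\ge m(1+t)^{-1}$ is \emph{not} assumed), and in the hyperbolic zone the Liouville substitution $v=\exp(-\tfrac12\int_0^t b)\,w$ followed by diagonalization; your formula for $V$ and the identity $\tfrac{d}{dt}\cE_w=-\tfrac{2V}{\la}\mathrm{Im}(w_+\bar w_-)$ are both correct.

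The genuine gap is that the proposal stops exactly where the proof begins. The uniform-in-$\la$ bound on $\int \tfrac{V}{\la}\mathrm{Im}(w_+\bar w_-)\,dt$ cannot be obtained by ``integrating by parts against $\e^{2\i\la t}$'' as stated, because $w_+\bar w_-$ is not $\e^{2\i\la t}$ times a known amplitude --- its phase and amplitude corrections are themselves unknowns coupled back to $V$. Making this rigorous is precisely the $k$-step diagonalization hierarchy of \cite{HW08}: each step produces a new remainder involving one more derivative of $\de$, controlled by \eqref{ThmGH-e1}, and after $k$ steps the residual off-diagonal term must be integrable over the hyperbolic zone, which is where \eqref{ThmGH-bega} and the stabilization gain from \eqref{ThmGH-e2} enter quantitatively; you assert this arithmetic ``is exactly'' the hypothesis but do not derive it. Relatedly, your zone boundary $\la(1+t)=N$ is adapted only to the monotone part $m/(1+t)$; for general $\be$ the remainder after $k$ steps is of size $\la^{-k}(1+t)^{-\be(k+1)}$, whose integrability dictates a $\be$-dependent splitting (and an intermediate zone) rather than the single hyperbola you propose. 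So what you have is a correct and well-informed plan whose central analytic content --- the resonant bookkeeping that the theorem's hypothesis encodes --- is acknowledged but not supplied; as a proof it is incomplete.
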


\begin{remark}
The conditions \eqref{ThmGH-e1} and \eqref{ThmGH-e2} are called the $C^k$-property and the stabilization condition, respectively. 
\end{remark}

\begin{theorem}[\cite{GG25}]\label{ThmGG}
If $\de(t)$ satisfies either the following (i) or (ii): 
\begin{itemize}
\item[(i)] 
$\de \in L^1_{loc}([0,\infty))$, 
$\de(t) \ge 0$ and $\sup_{t\ge 0}\{(1+t)\de(t)\}<\infty$. 
\item[(ii)] 
For $\al>1$ and $0<r \le m$, $\de(t)$ is given by 
\begin{equation}\label{ex_de-GG}
  \de(t) = \frac{r}{1+t}\sin\((1+t)^\al\), 
\end{equation}
\end{itemize}
then \eqref{Eest0} with \eqref{tm0} holds. 
On the other hand, there exists $\de(t)$ satisfies (iii): 
\begin{itemize}
\item[(iii)] 
$\de \in C^\infty([0,\infty))$, 
$\sup_{t\ge 0}\{(1+t)|\de(t)|\}=:r \le m$ 
and
$\int^\infty_0\de(t)\,dt$ exists, 
\end{itemize}
such that \eqref{Eest0} with \eqref{tm0} does not hold. 
More precisely, the following estimate holds: 
\begin{equation}
  \inf_{t\ge 0}\left\{(1+t)^{m-r/2}E(t;u)\right\}>0 
\end{equation}
since $E(0;u)>0$. 
\end{theorem}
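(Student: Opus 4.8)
The plan is to pass to the Fourier side and study, for each frequency $\rho=|\xi|$, the equation $v''+b(t)v'+\rho^2 v=0$ for $v=\hat u(t,\xi)$, whose frequency-localized energy $E(t,\xi):=\rho^2|v|^2+|v'|^2$ integrates by Plancherel to $E(t;u)$. The substitution $v=(1+t)^{-m/2}\exp(-\tfrac12\int_0^t\delta(s)\,ds)\,w$ removes the damping and turns the equation into the Hill-type equation $w''+p(t,\xi)w=0$ with
\begin{equation}
p(t,\xi)=\rho^2+\frac{m(2-m)}{4(1+t)^2}-\frac{m\delta}{2(1+t)}-\frac{\delta^2}{4}-\frac{\delta'}{2}.
\end{equation}
Because $\int_0^\infty\delta$ exists in all three cases, the prefactor $\exp(-\tfrac12\int_0^t\delta)$ is bounded above and below, so everything is decided by how the correction in $p$ — and above all the term $-\delta'/2$ — perturbs the pure weak-dissipation rate $\min\{2,m\}$. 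I would decompose the phase space into the overdamped zone $\rho(1+t)\lesssim1$, the transition zone $\rho(1+t)\sim1$, and the oscillatory zone $\rho(1+t)\gg1$, and treat each separately.

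For (i), where only $\delta\ge0$ and $(1+t)\delta\le r$ are available and nothing is assumed on $\delta'$, I would avoid the substitution and argue directly with a Lyapunov functional of the shape $F(t)=(1+t)^{\tm}(|v'|^2+\rho^2|v|^2)+\tm(1+t)^{\tm-1}\operatorname{Re}(v'\bar v)$, where $\tm=\min\{2,m\}$. Differentiating and inserting $v''=-bv'-\rho^2 v$, the coefficient of $\rho^2|v|^2$ cancels identically, the genuinely dissipative contribution $2(\tm-m)(1+t)^{\tm-1}|v'|^2-2(1+t)^{\tm}\delta|v'|^2$ is $\le0$ precisely because $\tm\le m$ and, crucially, $\delta\ge0$, and only an indefinite cross term of size $O((1+t)^{\tm-2}|v'||v|)$ survives. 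In the oscillatory zone this cross term is dominated by $F$ and a Gronwall argument closes the estimate. The real work in (i) is the transition and overdamped zones, where $|v'||v|$ is no longer controlled by $E$ and one must analyze the diffusion-phase asymptotics directly; it is exactly the upper bound $(1+t)\delta\le r$ that keeps the effective damping from forcing the slow mode below the rate $\min\{2,m\}$, and that produces the rate-$2$ ceiling when $m>2$.

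Parts (ii) and (iii) are two faces of a single dichotomy governed by the instantaneous frequency of $\delta$. In (ii) the phase $(1+t)^\alpha$ has instantaneous frequency $\alpha(1+t)^{\alpha-1}\to\infty$, so $\delta$ sweeps through every frequency and never locks onto a \emph{resonance}; the substitution $\tau=(1+s)^\alpha$ gives $\int_t^\infty\delta(s)\,ds=\tfrac{r}{\alpha}\int_{(1+t)^\alpha}^\infty\tfrac{\sin\tau}{\tau}\,d\tau=O((1+t)^{-\alpha})$ after one integration by parts, whence the stabilization integral $\int_t^\infty|\int_s^\infty\delta|\,ds=O((1+t)^{1-\alpha})$, and even though $\delta'$ may grow it always appears under integrals tamed in the same way. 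Feeding these bounds into the zone analysis shows the perturbation is negligible and the rate $\min\{2,m\}$ survives, extending the mechanism of Theorem~\ref{ThmGH} to the critical rate. In (iii), by contrast, I would fix $\rho_0>0$ and take $\delta(t)=\tfrac{r}{1+t}\cos(2\rho_0 t+\phi)$, so that $\sup_{t\ge0}(1+t)|\delta|=r$ and $\int_0^\infty\delta$ converges by Dirichlet's test, while its \emph{fixed} frequency $2\rho_0$ resonates with the natural frequency $\rho_0$ of $w$: the resulting term $-\delta'/2\approx\tfrac{r\rho_0}{1+t}\sin(2\rho_0 t+\phi)$ is a parametric forcing in $w''+\rho_0^2w\approx0$, and a Mathieu-type analysis with the slowly decaying amplitude $r/(1+t)$ yields growth $w\sim(1+t)^{r/4}$. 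Undoing the substitution gives $E(t,\rho_0)\sim(1+t)^{-m+r/2}$, and choosing $u_0,u_1$ with Fourier mass concentrated near $|\xi|=\rho_0$ transfers this lower bound to $E(t;u)$, producing $\inf_{t\ge0}(1+t)^{m-r/2}E(t;u)>0$.

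The step I expect to be the main obstacle is the rigorous parametric-resonance estimate in (iii): one must make the heuristic rate $(1+t)^{r/4}$ precise for Hill's equation with a time-dependent, slowly decaying resonant coefficient, absorb the error terms $-\delta^2/4$ and the non-resonant part of $-\delta'/2$ together with the corrections to the natural frequency, and cope with the fact that the instability tongue around $2\rho_0$ has width shrinking like $1/(1+t)$ — which forces a careful localization of the data and a normal-form or continuity argument guaranteeing that the growth persists uniformly in $t$ rather than only along a subsequence. By comparison the diffusion-phase analysis in (i) and the integration-by-parts estimates in (ii), while delicate, are essentially routine.
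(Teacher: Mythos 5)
The paper itself contains no proof of Theorem \ref{ThmGG}: it is quoted from \cite{GG25}, and what the paper does reproduce (and extend) is the engine behind that proof, namely the polar-coordinate representation of Lemma \ref{lemma_rho-th}, which writes the frequency-localized energy exactly as $\cE(t,\xi)=\cE(t_0,\xi)\exp\bigl(-2\int_{t_0}^t b(s)\sin^2\theta(s,\xi)\,ds\bigr)$ and so reduces parts (i)--(ii) to oscillatory-integral bounds on $\int b(s)\cos(2\theta(s,\xi))\,ds$ (this becomes Proposition \ref{Prop1} (i)--(ii) here), and makes part (iii) a matter of choosing $\de$ whose oscillation correlates with $\cos(2\theta)$ at one frequency. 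Your route --- Liouville transform to a Hill equation, a Matsumura-type Lyapunov functional for (i), stabilization integrals for (ii), parametric resonance for (iii) --- is genuinely different; your computation of $p(t,\xi)$ and of $\int_t^\infty\de=O((1+t)^{-\al})$ are correct, and the resonance mechanism you describe for (iii) is the right one.

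However, the proposal has gaps that are not merely technical. First, in (ii) you never use the hypothesis $0<r\le m$, which is exactly the condition $b(t)\ge 0$; the stabilization mechanism you invoke is precisely Theorem \ref{ThmGH} and, as Remark \ref{Rem_ThmGH} notes, it delivers only the rate $\tm=m$ in the range $m<1$. To reach $\tm=\min\{2,m\}$ for $m\ge 1$ one needs the dissipative-zone estimate (here Proposition \ref{Prop1} (ii)), whose proof leans essentially on $b\ge 0$ through the sign and monotonicity arguments for $\rho_2,\ \theta_2$; a proof of (ii) that never invokes $r\le m$ cannot be complete. Second, in (i) the low-frequency and transition zones --- which is where the rate, and in particular the ceiling $2$ for $m>2$, is actually produced --- are entirely deferred. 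Third, and most seriously, in (iii) the passage from single-mode resonance to a lower bound for $E(t;u)=\int\cE(t,\xi)\,d\xi$ is the crux and your localization ``near $|\xi|=\rho_0$'' does not deliver it: a mode at distance $\epsilon$ from $\rho_0$ resonates only up to time $\sim\epsilon^{-1}$, so at time $t$ only a set of frequencies of measure $O(1/t)$ is still growing, and integrating in $\xi$ loses a full power of $t$ against the claimed bound $\inf_t(1+t)^{m-r/2}E(t;u)>0$. You flag this obstacle yourself, but it is the heart of the counterexample (in \cite{GG25} it is circumvented by the abstract operator setting, where a single mode is an honest solution), so as it stands part (iii) is not proved.
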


\begin{remark}\label{Rem_ThmGH}
Theorem \ref{ThmGH} can be applied to \eqref{ex_de-GG} only for $m<1$ and $\al>1$ without any assumption to $r$ such as in Theorem \ref{ThmGG} (ii). 
Indeed, \eqref{ThmGH-e1} and \eqref{ThmGH-e2} hold for 
$\be = -\al+1 + \al/(k+1)$ and $\ga = \al-1$, 
and thus \eqref{ThmGH-bega} is valid (see \cite{GH25}). 
\end{remark}

%
%
\section{Main theorems}
%
%

From now on, we particularly consider the following dissipative coefficient $b(t)$: 
\begin{equation}\label{b=si_0}
  b(t) = \frac{\si_0(\eta(t))}{1+t},
\end{equation}
where $\si_0(\eta)$ $(\eta \in \R)$ and $\eta(t)$ $(t \ge 0)$ satisfy the followings: 
\begin{itemize}
\item[(A1)] 
$\si_0(\eta)$ is periodic and H\"older continuous of order $\al>1/2$. 
\item[(A2)] 
$\si_0(\eta)$ satisfies either of the following conditions: 
\begin{equation}\label{b>=0}
  \min_{\eta \in \R}\{\si_0(\eta)\} \ge 0
\end{equation}
or
\begin{equation}\label{0<m<1}
  m:=\lim_{\eta\to\infty} \frac{1}{\eta}\int^\eta_0\si_0(s)\,ds \in (0, 1). 
\end{equation}
The conditions \eqref{b>=0} and \eqref{0<m<1} are called 
\textit{non-negative dissipative condition} 
and
\textit{non-effective dissipative condition} respectively. 
\item[(A3)] 
$\eta \in C^2([0,\infty))$, $\eta(0) \ge 0$, $\eta'(t) > 0$ 
and $\eta''(t)>0$ for any $t \ge 0$. 
\end{itemize}
%

Let the period of $\si_0$ be $2T$ and denote the Fourier series of $\si_0$ by 
\begin{equation}\label{Fourier_si0}
  \si_0(\eta)
 =m+\sum_{n=1}^\infty 
  \(\al_n \cos\(\frac{n\pi\eta}{T}\) + \be_n \sin\(\frac{n\pi\eta}{T}\)\). 
\end{equation}
Then we set $m$ and $\de(t)$ of \eqref{de(t)} by 
\begin{equation}\label{de=si}
  m = \frac{1}{2T}\int^T_{-T} \si_0(s)\,ds
  \;\;\text{ \textit{and} }\;\;
  \de(t)=\frac{\si(\eta(t))}{1+t}, 
\end{equation}
where 
\begin{equation}\label{si(eta)}
  \si(\eta):=\sum_{n=1}^\infty 
  \(\al_n \cos\(\frac{n\pi\eta}{T}\) + \be_n \sin\(\frac{n\pi\eta}{T}\)\), 
\end{equation}
that is, 
\begin{equation}\label{b=si}
  b(t) = \frac{m}{1+t} + \frac{\si(\eta(t))}{1+t}. 
\end{equation}
Here we note that $m$ in \eqref{0<m<1} and \eqref{de=si} are the same. 
Moreover, the following estimates hold: 
\begin{equation}\label{FC}
  A_1:= \sum_{n=1}^\infty \(|\al_n|+|\be_n|\) <\infty 
\end{equation}
since (A1) is valid (see \cite{Kz}), and
\begin{equation}\label{B0}
  B_0:=
  \sup_{\tau_-,\tau_+ \in [0,\infty)}
  \left\{\left|
  \int^{\tau_+}_{\tau_-} \frac{\si(\eta(s))}{1+s}\,ds\right|\right\}
  <\infty
\end{equation}
by Lemma \ref{lemma_int_si0}. 

Under the assumption (A3), we define $\mu(t)$ on $[0,\infty)$ by 
\begin{equation}
  \mu(t):=\max_{0 \le \tau \le t}
  \left\{\frac{1}{(1+\tau)\eta''(\tau)}\right\}. 
\end{equation}

\begin{remark}
If (A3) holds, then $\inf_{t\ge 0} \{\eta(t)/(1+t)\}>0$. 
In particular, if $\sup_{t \ge 0}\{\mu(t)\}<\infty$, then 
$\lim_{t\to\infty} \eta(t)/(1+t) =\infty$, 
that is, $\si(\eta(t))$ oscillates faster than the periodic function $\si(1+t)$. 
Indeed, denoting $\sup_{t\ge 0}\{\mu(t)\} = \mu_1$, 
and noting that 
\begin{align*}
  \int^t_0 \int^s_0 \eta''(\tau)\,d\tau\,ds
\begin{dcases}
=\eta(t)-\eta(0)-\eta'(0)t,
  \\
\ge 
  \int^t_0 \int^s_0 \frac{1}{\mu_1 (1+\tau)} \,d\tau\,ds
 =\frac{(1+t)\log(1+t) - t}{\mu_1},
\end{dcases}
\end{align*}
we have 
\begin{align*}
  \frac{\eta(t)}{1+t}
  \ge
  \frac{(1+t)\log(1+t) - t + \mu_1(\eta(0)+\eta'(0)t)}{\mu_1(1+t)}
  \to \infty
  \;\;(t\to\infty). 
\end{align*}
\end{remark}

In the above setting, our first theorem is given as follows. 
\begin{theorem}\label{Thm1}
Let (A1), (A2) and (A3) be valid. 
For any $\ve>0$ and $\tm$ satisfying 
\begin{equation}\label{Thm1_tm}
  0 < \tm \le \tm_0:=
\begin{cases}
  \min\{2,m\} & \;\text{ \textit{for} }\; \eqref{b>=0},\\ 
  m & \;\text{ \textit{for} }\; \eqref{0<m<1},
\end{cases}
\end{equation}
there exists a positive constant $C$ such that the following estimate holds: 
\begin{equation}\label{Eest-Thm}
  E(t) \le 
  C \(1+\frac{\exp\(\ve \mu(t)\)}{(1+t)^{m-\tm}}\)
  (1+t)^{-\tm}
  \(E(0) + \|u_0(\cd)\|_{\dot{H}^{1-\tm/2}}^2\). 
\end{equation}
\end{theorem}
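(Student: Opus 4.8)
The plan is to pass to the Fourier side and reduce \eqref{u} to a family of scalar ODEs indexed by the frequency $\xi\in\R^d$, and then to analyse each fibre by splitting the $(t,\xi)$-plane into a dissipative zone and an oscillatory zone. First I would apply the partial Fourier transform in $x$: writing $\hu(t,\xi)$ for the transform of $u$, the equation becomes
\[
  \pa_t^2\hu + |\xi|^2\hu + b(t)\pa_t\hu = 0,
  \qquad
  \hu(0,\xi)=\hu_0(\xi),\quad \pa_t\hu(0,\xi)=\hu_1(\xi),
\]
and by Plancherel $E(t)=\int_{\R^d}\cE(t,\xi)\,d\xi$ with the fibre energy $\cE(t,\xi):=|\xi|^2|\hu|^2+|\pa_t\hu|^2$. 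To strip off the leading weak dissipation I would set $v=(1+t)^{m/2}\hu$; by \eqref{b=si} this converts the equation into
\[
  \pa_t^2 v + |\xi|^2 v + \de(t)\,\pa_t v
  + \(\frac{m(2-m)}{4(1+t)^2} - \frac{m\,\de(t)}{2(1+t)}\)v = 0,
\]
so that the residual damping is exactly the oscillatory part $\de(t)=\si(\eta(t))/(1+t)$ and the weak dissipation has been traded for an explicit potential of size $(1+t)^{-2}$.

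Next I would fix, for each $\xi$, a separation time $\tx$ marking the transition between the regime $|\xi|(1+t)\lesssim 1$, where the potential and the damping dominate, and $|\xi|(1+t)\gtrsim 1$, where $v$ behaves like an oscillator of slowly varying frequency. In the dissipative zone a direct monotonicity and comparison argument against the $\de\equiv 0$ profile yields the pure decay: integrating in $\xi$ and trading the temporal decay $(1+t)^{-\tm}$ against a spatial weight $|\xi|^{\tm}$ lowers the required regularity from $\dot{H}^1$ to $\dot{H}^{1-\tm/2}$, which is exactly the norm on the right of \eqref{Eest-Thm} and accounts for the admissible range $0<\tm\le\tm_0$. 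Here the two alternatives of (A2) enter: under \eqref{b>=0} one has $b\ge 0$, so $\cE(\cd,\xi)$ is monotone and the full Matsumura rate $\min\{2,m\}$ is reachable (the saturation at $2$ coming from the sign change of the potential when $m>2$), whereas under \eqref{0<m<1} the coefficient may become negative and only the mean rate $\tm_0=m$ survives.

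The decisive step is the oscillatory zone. There I would differentiate $\log\cE$ and integrate from $\tx$; the only dangerous contribution is $\int \de(s)\,|\pa_t v|^2/\cE\,ds$, which I would control by integrating by parts against a primitive of $\si(\eta(\cd))/(1+\cd)$ and invoking the uniform bound $B_0$ of \eqref{B0}, while the absolute summability $A_1$ of \eqref{FC} lets me treat $\si$ mode by mode as a sum of $\cos(n\pi\eta/T)$ and $\sin(n\pi\eta/T)$. The per-oscillation gain of each mode is governed by $1/((1+\cd)\eta'')$, so summing the residual losses across the zone produces a bound of the shape $\cE(t,\xi)\le C\exp(\ve\mu(t))\,\cE(\tx,\xi)$; the frequencies already oscillatory at $t=0$ therefore contribute the correction $\exp(\ve\mu(t))(1+t)^{-m}$ times the data, which is precisely the first summand of \eqref{Eest-Thm}.

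Finally I would glue the two zones and integrate in $\xi$: the low-frequency part produces the clean $(1+t)^{-\tm}(E(0)+\|u_0(\cd)\|_{\dot{H}^{1-\tm/2}}^2)$ term and the high-frequency part the $\exp(\ve\mu(t))(1+t)^{-m}$ correction, giving \eqref{Eest-Thm}. The main obstacle I expect is the oscillatory-zone estimate: quantifying the cancellation in $\int\de\,|\pa_t v|^2/\cE$ uniformly in $\xi$ and packaging the cumulative loss as $\exp(\ve\mu(t))$ rather than a cruder power is the heart of the Ghisi--Gobbino-type refinement, and it is where the H\"older order $\al>1/2$ in (A1) and the convexity $\eta''>0$ in (A3) are genuinely used.
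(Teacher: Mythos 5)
Your overall architecture coincides with the paper's: Fourier reduction, a splitting of the time--frequency plane along $(1+t)|\xi|\sim N$, the Ghisi--Gobbino log-energy/polar-coordinate identity $\cE(t,\xi)=\cE(t_0,\xi)\exp(-2\int b\sin^2\th)$, extraction of the mean $m/(1+t)$ (controlled by $B_0$), and a mode-by-mode non-stationary-phase estimate of $\int\si_0(\eta(s))\cos(2\th)/(1+s)\,ds$ in which the resonance window where $n\eta'(s)\approx 2|\xi|$ is measured by $1/((1+s)\eta''(s))$, producing the $\exp(\ve\mu(t))$ factor. That part of your sketch is faithful to the actual proof. The genuine gap is the dissipative zone. ``A direct monotonicity and comparison argument against the $\de\equiv0$ profile'' does not deliver what is needed there: monotonicity of $\cE$ only gives $\cE(t,\xi)\le\cE(0,\xi)=|\xi|^2|v_0|^2+|v_1|^2$, with \emph{no} decay attached to $|v_1|^2$, whereas the theorem requires the coefficient of $|v_1|^2$ in $Z_D$ to decay like $(1+t)^{-2m}+|\xi|^2\ga(t;m)^2$ (which, combined with $(1+t)|\xi|\le N$, yields the rate $\tm_0$). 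The paper obtains this by a sign-preservation argument for the polar angle of the solution launched from $(0,v_1)$ under \eqref{b>=0}; and under \eqref{0<m<1} the coefficient $b$ may change sign, $\cE$ is not monotone at all, no pointwise comparison with the unperturbed profile is available (the perturbation $\de$ is only small in the averaged sense of $B_0$), and the paper instead runs a full Neumann-series estimate of the Volterra integral equation for the fundamental matrix. Your proposal does not engage with either mechanism, and this is where the case distinction in \eqref{Thm1_tm} is actually decided.

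Two smaller points. First, you assert that only the high frequencies pay the $\exp(\ve\mu(t))$ correction, but the reason must be supplied: for $|\xi|\le N$ every resonance time $\tau_{n+}$ is bounded by the fixed time $(\eta')^{-1}(2N+\ka)$, so in $Z_{H1}$ the factor $\mu(t)$ can be replaced by the constant $\mu(\tau_{N,\ka})$. Without this observation the low-frequency contribution would be $\exp(\ve\mu(t))(1+t)^{-\tm}$, which is strictly worse than \eqref{Eest-Thm} whenever $\tm<m$ and $\mu(t)\to\infty$. Second, the substitution $v=(1+t)^{m/2}\hu$ (which the paper does not use) introduces the potential $\tfrac{m(2-m)}{4}(1+t)^{-2}-\tfrac{m}{2}\de(t)(1+t)^{-1}$; in the hyperbolic zone a Gronwall bound against this potential costs $\exp(C\int(1+s)^{-2}|\xi|^{-1}ds)\sim(1+t)^{C/N}$, an additional loss that has to be absorbed. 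Working with the original equation and the exact identity $\frac{d}{dt}\log\cE=-2b\sin^2\th$, as the paper does, avoids this entirely.
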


\begin{remark}\label{rem-IC-H}
If $\sup_{t \ge 0}\{\mu(t)\}<\infty$ holds as in the examples below, then \eqref{Eest-Thm} gives an energy decay estimate of the same order as \eqref{Eest0}. 
In particular, if $\tm < 2$, then less assumption is required to $u_0$ since 
$H^1 \subsetneq \dot{H}^1 \cap \dot{H}^{1-\tm/2}$. 
From a different point of view, if the assumption $u_0 \in \dot{H}^1 \cap \dot{H}^{\ka}$ weakens as $\ka$ becomes larger to $1$, then the decay rate is smaller, and finally no decay at $\ka=1$. 
\end{remark}

\begin{remark}
$L^p$-$L^q$ decay estimates to the solution of \eqref{u} with $b\equiv 1$ in homogeneous Sobolev space is studied in \cite{IIOW19}. 
Here we note that the equation of \eqref{u} is more parabolic-like if $b(t)$ is a positive constant, and the influence of the initial data is slightly different from our model, which is closer to a free wave equation.  
\end{remark}

\begin{example}\label{ex_eta=t}
Let $\eta(t) = (1+t)^\al$ and $\al>0$. 
Since 
\[
  (1+t)\eta''(t) = \al(\al-1)(1+t)^{\al-1}, 
\]
(A3) is valid for $\al>1$, and then 
\[
  \mu(t) 
 =\frac{1}{\al(\al-1)}. 
\]
Here we note that Theorem \ref{ThmGG} (ii) corresponds to the case 
$\si_0(\eta)=m+r\sin(\eta)$ with $0< r \le m$. 
\end{example}

The next examples consider limit cases of Example \ref{ex_eta=t} as $\al \to 1+0$. 

\begin{example}\label{ex_eta=log}
Let $\eta(t) = (e+t)(\log(e+t))^\be$ and $\be>0$. 
Since 
\begin{align*}
  \eta''(t)
 =\frac{\be}{e+t}
  \(\log(e+t)\)^{\be-1}\(1+(\be-1)\(\log(e+t)\)^{-1}\), 
\end{align*}
(A3) is valid, and then 
\begin{align*}
  \mu(t)
  \begin{cases}
    = e\be^{-2} & (\be \ge 1), \\
    \le e\be^{-2}\(\log(e+t)\)^{1-\be} & (\be<1). 
  \end{cases}
\end{align*}
Here we note that 
$\lim_{t\to\infty}\exp((\log(e+t))^{1-\be})/t^\nu=0$ 
for any $0<\be<1$ and $\nu>0$. 
Therefore, for any $\tm<m$, there exists a positive constant $C$ such that the following estimate holds 
\begin{equation}\label{Eest-ex_eta=log}
  E(t) \le C (1+t)^{-\tm}
  \(E(0) + \|u_0(\cd)\|_{\dot{H}^{1-\tm/2}}^2\). 
\end{equation}
\end{example}

\begin{example}\label{ex_eta=loglog}
Let $\eta(t) = (e^e+t)(\log\log(e^e+t))^\ga$ and $\ga>0$. 
Since 
\begin{align*}
  \eta''(t)
&=\frac{\ga \(\log\log\(e^e+t\)\)^{\ga-1}}{\(e^e+t\)\log\(e^e+t\)}
  \(1-\frac{1}{\log\(e^e+t\)}
     +\frac{\ga-1}{\log\(e^e+t\)\log\log\(e^e+t\)}\)
\\
&\ge
  \frac{\ga \(\log\log\(e^e+t\)\)^{\ga-1}}{\(e^e+t\)\log\(e^e+t\)}
  \(1-\frac{2}{e}\)
\\
&\ge
  \frac{\ga(e-2)}{e^{e+1}}
  \frac{\(\log\log\(e^e+t\)\)^{\ga-1}}{(1+t)\log\(e^e+t\)},
\end{align*}
(A3) is valid, and then 
\begin{align*}
  \mu(t)
   \le 
   \frac{e^{e+1}}{\ga(e-2)}
   \frac{\log\(e^e+t\)}{\(\log\log\(e^e+t\)\)^{\ga-1}}. 
\end{align*}
If $\ga \ge 1$, then $\lim_{t\to\infty} \mu(t)/\log t < \infty$. 
Therefore, for any $\tm < m$, \eqref{Eest-ex_eta=log} holds by choosing $\ve>0$ small enough. 
On the other hand, if $\ga<1$, then noting that 
\begin{equation}
  \eta''(t) 
  \le \frac{\ga \(\log\log\(e^e+t\)\)^{\ga-1}}{\(e^e+t\)\log\(e^e+t\)}
    \(1+\frac{\ga-1}{e}\)
  \le 
    \ga \frac{\(\log\log\(e^e+t\)\)^{\ga-1}}{(1+t)\log\(e^e+t\)}, 
\end{equation}
we cannot expect the energy decay 
$\lim_{t\to\infty} E(t) \to 0$ from Theorem \ref{Thm1} 
since $\lim_{t\to\infty} \mu(t)/\log t = \infty$. 
\end{example}

In Theorem \ref{Thm1}, we considered the case that the condition to the initial data is weakened; which actually relates to the influence of low frequency energy on the energy decay estimate. 
On the other hand, the effect of high-frequency energy is thought to be almost negligible when $b(t)$ is monotonic, but it may have a crucial effect when $b(t)$ has oscillations. 
The effect of such oscillations of $b(t)$ on high-frequency energy is possible to be controlled by considering the initial data in $H^\infty$ such as the Gevrey class (see \cite{Ro93}); for precise, see \cite{GH25} and concluding remarks. 
The next theorem asserts that the loss of decay that may occur in Theorem \ref{Thm1} as $\lim_{t\to\infty}\mu(t)=\infty$, can be avoided by setting stronger restrictions to the initial data. 

\begin{theorem}\label{Thm2}
Let (A1), (A2) and (A3) be valid. 
For any $\ve>0$, there exist positive constants $C$ and $\nu$ such that the following estimate holds: 
\begin{equation}\label{Eest-Thm2}
  E(t) \le C (1+t)^{-\tm_0} 
  \int_{\R^d} 
  \exp\(2\nu \zeta(2|\xi|+\ve)\)
    \(\(1+|\xi|^2\)|\hat{u}_0(\xi)|^2 + |\hat{u}_1(\xi)|^2\)\,d\xi, 
\end{equation}
where 
\[
  \zeta(r) = \sqrt{\mu \((\eta')^{-1}(r)\)} 
\]
and $\hat{u}_j(\xi)$ $(j=0,1)$ denote the Fourier transform of $u_j(x)$ with respect to spacial variable $x \in \R^d$.
\end{theorem}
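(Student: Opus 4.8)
The plan is to pass to the Fourier side in $x$ and analyse the resulting one-parameter family of ordinary differential equations frequency by frequency, converting the possibly unbounded growth factor $\exp(\ve\mu(t))$ of Theorem \ref{Thm1} into a frequency-dependent weight that is paid for by the extra regularity imposed on $(u_0,u_1)$. Writing $\rho:=|\xi|$ and $v(t,\xi):=\hat u(t,\xi)$, the problem \eqref{u} becomes the scalar equation
\begin{equation}
  v'' + \rho^2 v + b(t)\,v' = 0, \qquad v(0,\xi)=\hat u_0(\xi),\;\; v'(0,\xi)=\hat u_1(\xi),
\end{equation}
and by Plancherel's theorem $E(t)=\int_{\R^d}\cE(t,\xi)\,d\xi$ with $\cE(t,\xi):=|v'(t,\xi)|^2+\rho^2|v(t,\xi)|^2$. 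It therefore suffices to establish the pointwise-in-$\xi$ bound
\begin{equation}\label{plan-pointwise}
  \cE(t,\xi) \le C\,(1+t)^{-\tm_0}\exp\(2\nu\,\ze(2\rho+\ve)\)
  \bigl((1+\rho^2)|\hat u_0(\xi)|^2+|\hat u_1(\xi)|^2\bigr)
\end{equation}
uniformly for $t\ge 0$, after which integration in $\xi$ gives \eqref{Eest-Thm2}. Equivalently, introducing the weighted energy $\tcE(t,\xi):=(1+t)^{\tm_0}\cE(t,\xi)$ (augmented by the lower-order correction of \eqref{Eest_m=2} in the critical case $\tm_0=2$), the goal is to show that $\tcE$ grows over the whole half-line by at most the factor $\exp(2\nu\,\ze(2\rho+\ve))$.

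For each fixed $\rho$ I would split the time axis at the characteristic time $\tx:=(\eta')^{-1}(2\rho+\ve)$, at which the instantaneous oscillation rate $\eta'(t)$ of $\si(\eta(t))$ overtakes the natural frequency $2\rho$ of the mode; since $\eta'$ is strictly increasing by (A3), $\tx$ is well defined and $\ze(2\rho+\ve)=\sqrt{\mu(\tx)}$. On the fast zone $[\tx,\infty)$ one has $\eta'(t)>2\rho+\ve$, so after passing to the diagonalised system for $v'\pm\i\rho v$ and removing the free rotation $\e^{\pm\i\rho t}$, every harmonic $\e^{\pm\i n\pi\eta(t)/T}$ of $\si$ is paired with $\e^{\pm2\i\rho t}$ against a non-stationary, strictly separated phase. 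An integration by parts exploiting the summability \eqref{FC} and the stabilisation bound \eqref{B0} then absorbs the oscillatory perturbation into a harmless remainder, and $\tcE$ is bounded on $[\tx,\infty)$ by a fixed multiple of $\tcE(\tx,\xi)$, exactly as in the proof of Theorem \ref{Thm1} in the regime where $\mu$ contributes no growth.

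The heart of the matter, and the step I expect to be the main obstacle, is the resonance zone $[0,\tx]$, which contains the stationary points of all the relevant phases $2\rho t-n\pi\eta(t)/T$. On this interval I would estimate the amplification of $\tcE$ by a stationary-phase analysis of the near-resonant contributions. Near a stationary time $t_*$ the phase satisfies $\phi'(t_*)=0$ and $|\phi''(t_*)|\sim\eta''(t_*)$, so the effective resonance window has width $\sim(\eta''(t_*))^{-1/2}$, while the driving amplitude is $|b(t_*)|\lesssim A_1(1+t_*)^{-1}$; the resulting increment of $\log\tcE$ per sweep is controlled by $A_1(1+t_*)^{-1}(\eta''(t_*))^{-1/2}=A_1(1+t_*)^{-1/2}\sqrt{\mu(t_*)}\le A_1\sqrt{\mu(\tx)}$, which is the origin of the square root in $\ze$. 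Summing over the harmonics $n\ge 1$ via \eqref{FC} and running a Gronwall estimate on $[0,\tx]$ should yield $\tcE(\tx,\xi)\le\exp(2\nu\,\ze(2\rho+\ve))\,\tcE(0,\xi)$ with $\nu$ depending only on $A_1$ and $\ve$. Controlling the transition across each $t_*$ uniformly in $\rho$, and in particular verifying that the lowest harmonics do not produce a larger-than-$\sqrt{\mu}$ contribution, is the delicate part; here the H\"older order $\al>1/2$ in (A1) enters to guarantee the decay of $\al_n,\be_n$ that makes the summation convergent.

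Combining the two zones, namely amplification bounded by $\exp(2\nu\,\ze(2\rho+\ve))$ across $[0,\tx]$ followed by no further growth of $\tcE$ on $[\tx,\infty)$, gives $\tcE(t,\xi)\le C\exp(2\nu\,\ze(2\rho+\ve))\,\tcE(0,\xi)$ for all $t\ge0$, which is \eqref{plan-pointwise} after dividing by $(1+t)^{\tm_0}$ and using $\tcE(0,\xi)\le(1+\rho^2)|\hat u_0(\xi)|^2+|\hat u_1(\xi)|^2$. Integrating \eqref{plan-pointwise} in $\xi$ and invoking Plancherel yields \eqref{Eest-Thm2}; the Gevrey-type weight $\exp(2\nu\,\ze(2\rho+\ve))$ is precisely what the high-frequency part of the data must absorb, so that the loss of decay occurring in Theorem \ref{Thm1} when $\mu(t)\to\infty$ is traded for the smoothness assumption on $(u_0,u_1)$.
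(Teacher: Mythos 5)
Your treatment of the high-frequency regime captures exactly the mechanism of the paper's Proposition \ref{Prop2}: the unbounded factor $\mu(t)$ in Theorem \ref{Thm1} arises only from the resonance windows around the times where $\eta'(t)=2|\xi|/n$, all of which lie before $(\eta')^{-1}(2|\xi|+\ve)$; bounding each window's contribution by its width times the amplitude $\sim(1+t_*)^{-1}$, summing over harmonics via \eqref{FC}, and balancing against the non-stationary-phase error $O(1/\ka)$ forces $\ka\sim\mu^{-1/2}$ and produces the exponent $\nu\,\zeta(2|\xi|+\ve)$. The paper implements this not by a stationary-phase/Gronwall argument but by re-running Lemmas \ref{Lemma_ossint}--\ref{lemma_int-b-th} with the monotonicity $\tau_{n+}\le\tau_{1+}$ and a $\xi$-dependent splitting parameter $\ka(\xi)=\ve\zeta(2N+\ve)/\zeta(2|\xi|+\ve)$; the two routes are essentially equivalent, and your identification of the origin of the square root is correct.

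The gap is at low frequencies. You split the half-line only in time, at $t_\xi=(\eta')^{-1}(2|\xi|+\ve)$, and propose to control the whole evolution by oscillatory-integral estimates on the polar-coordinate representation. But the non-stationary-phase bound (Lemma \ref{Lemma_ossint}) for the phases $n\eta(s)\pm2|\xi|s$ produces boundary terms of size $1/(|\xi|(1+t_0))$, which is useless when $(1+t)|\xi|$ is small; in that regime $\th(t,\xi)$ need not rotate at rate $|\xi|$, and $\exp(-2\int b\sin^2\th\,ds)$ cannot be shown to decay like $(1+t)^{-m}$ by this method. This is precisely why the paper keeps the dissipative zone $Z_D=\{(1+t)|\xi|\le N\}$ and proves Proposition \ref{Prop1} (ii)/(iii) by entirely different means (the sign structure under \eqref{b>=0}, or a Volterra series under \eqref{0<m<1}); it is also the only place where the cap $\tm_0=\min\{2,m\}$ and the $|\hat u_0|^2$ part of the weight $(1+|\xi|^2)|\hat u_0(\xi)|^2$ actually originate --- for $(1+t)|\xi|\le N$ the term $|\xi|^2|v(t,\xi)|^2$ does not decay in $t$ at all, and the factor $(1+t)^{-2}$ is purchased solely from the constraint $|\xi|\le N(1+t)^{-1}$. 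As written, your plan gives no decay statement in this region, so the pointwise bound, and hence \eqref{Eest-Thm2}, is not established for small $|\xi|$. A complete proof must graft your high-frequency argument onto the zone decomposition and the $Z_D$, $Z_{H1}$ estimates already established for Theorem \ref{Thm1}, which is exactly how the paper proceeds.
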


\begin{example}
Let $\eta(t)$ be defined in Example \ref{ex_eta=log} for $\be<1$. 
Since 
\begin{align*}
  \la 
 =\eta'(t)
 =\(\log(e+t)\)^{\be}
  \(1 + \frac{\be}{\log(e+t)}\)
 \ge \(\log(e+t)\)^{\be}, 
\end{align*}
it follows that 
\[
  \log\(e + (\eta')^{-1}(\la)\) \le \la^{\frac{1}{\be}}, 
\]
we see that 
\begin{align*}
  \mu\((\eta')^{-1}(\la)\)
  \le 
  \frac{e}{\be^{2}} \(\log\(e+(\eta')^{-1}(\la)\)\)^{1-\be}
  \le \frac{e}{\be^{2}} \la^{\frac{1-\be}{\be}}. 
\end{align*}
Therefore, we have 
\begin{align*}
  \exp\(\nu \zeta(2|\xi|+\ve)\)
  \le
  \exp\(\frac{\sqrt{e} \nu}{\be} (2|\xi|+\ve)^{\frac{1-\be}{2\be}}\). 
\end{align*}
Consequently, 
if $u_0, u_1 \in G^{s}_\infty$ with $s=2\be/(1-\be)$ and $\be>1/3$, 
where $G^s_\infty$ is the Gevrey class of order $s$ define by 
\[
  G^s_\infty:=\bigcap_{\ka>0}
  \left\{f\in L^2\;;\;
  \exp(\ka |\xi|^{1/s})\hat{f}(\xi) \in L^2
  \right\},
\]
then there exists a positive constant $C=C(u_0,u_1)$ such that the decay estimate without loss of decay $E(u) \le C(1+t)^{-\tm}$ holds by Theorem \ref{Thm2}. 
\end{example}

\begin{remark} 
If we apply the example in Example \ref{ex_eta=loglog} for $\ga<1$ to Theorem \ref{Thm2}, then 
\[
  \la = \eta'(t) \ge (\log\log(e^e + t))^\ga 
\]
and  
\[
  \mu((\eta')^{-1}(\la)) \le 
  \frac{e^{e+1}}{\ga(e-2)}\la^{\frac{1-\ga}{\ga}} \exp\(\la^{\frac{1}{\ga}}\), 
\]
thus $\hat{u}_0(\xi)$ and $\hat{u}_1(\xi)$ must be decreasing very fast as $|\xi|\to\infty$, which can no longer be described by the Gevrey class. 
\end{remark}

In the end of this section, we summarize the advances made by our main theorems over previous results. 
Actually, our perturbation term $\de(t)$ in \eqref{de(t)} seems to be very special form compared to those considered in Theorem \ref{ThmGH}; but doesn't require differentiability. 
Let us consider the following example $\de(t)$: 
\begin{equation}
  \de(t) = \frac{\sin\((1+t)^\al\)}{(1+t)^{p}}, 
\end{equation}
which is introduced in \cite{GH25} as an example for Theorem \ref{ThmGH}, 
where $\al>1$ and $p \le 1$. 
Indeed, \eqref{ThmGH-e1} and \eqref{ThmGH-e2} hold for 
\begin{equation}\label{ThmGH-albe}
  \be = -\al+1+\frac{p+\al-1}{k+1} \;\;(k \ge 1)
  \;\;\text{ \textit{and} }\;\;
  \ga=\al-1
\end{equation}
respectively. 
Then \eqref{ThmGH-bega} is represented by $(p-1)/(k+1) \ge 0$, and it also holds for $p=1$. 
Therefore, the assumption $k \ge 1$ of Theorem \ref{ThmGH} can be weakened by Theorem \ref{Thm1} in special cases. 
Next, let us consider the relationship with Theorem \ref{ThmGG}. 
The assumption of smoothness of $\de(t)$ in (i) of Theorem \ref{ThmGG} is weaker than the assumption required in Theorem \ref{Thm1}. 
However, if we apply our model \eqref{b=si} to Theorem \ref{ThmGG} (i) 
with $\de(t)=\si(\eta(t))+\ka_-$, 
where $\ka_-:=|\min_{\eta \in \R}\{\si(\eta)\}|$, 
that is, 
\begin{equation}
  b(t) 
 =\frac{m}{1+t} + \frac{\si(\eta(t))}{1+t}
 =\frac{m-\ka_-}{1+t} 
  + \frac{\de(t)}{1+t}, 
\end{equation}
the decay estimate \eqref{Eest0} holds with 
$\tm = \min\{2, m-\ka_-\}$, 
which is worse estimate than that obtained by Theorem \ref{Thm1} 
when $m<2+\ka_-$. 
It is clear that Theorem \ref{Thm1} is a generalization of Theorem \ref{ThmGG} (ii), and considers more details for $\al \to 1+0$ as in Example \ref{ex_eta=log} and Example \ref{ex_eta=loglog}. 
Moreover, Theorem \ref{Thm1} does not require $b(t) \ge 0$, as required in Theorem \ref{ThmGG}, under the assumption \eqref{0<m<1}. 
Finally, as mentioned in Remark \ref{rem-IC-H}, Theorem \ref{Thm1} shows the possibility that the initial condition may be weakened from 
$(u_0,u_1) \in H^1\cap L^2$ which is usually assumed in the previous works. 

%
%
\section{Proof of Theorem \ref{Thm1}}
%
%

%
\subsection{Representation of the solution in time-frequency space}
%

By partial Fourier transformation with respect $x \in \R^d$, and denoting
\begin{equation}
  v(t,\xi):=\hat{u}(t,\xi), 
  \;\;
  v_0(\xi):=\hat{u}_0(\xi)
  \;\;\text{ and }\;\;
  v_1(\xi):=\hat{u}_1(\xi),
\end{equation}
\eqref{u} is rewritten as follows: 
\begin{equation}\label{v}
\begin{cases}
  \(\pa_t^2 + |\xi|^2 + b(t) \pa_t\) v(t,\xi) = 0,
  & (t,\xi) \in (0,\infty) \times \R^d,\\ 
  v(0,\xi)=v_0(\xi),\;\; \pa_t v(0,\xi)) = v_1(\xi), & \xi \in \R^d. 
\end{cases}
\end{equation}
Denoting the energy density to the solution of \eqref{v} at $t$ by 
\begin{equation}
  \cE(t,\xi)=\cE(t,\xi;v):=
  |\xi|^2 |v(t,\xi)|^2 + \left|\pa_t v(t,\xi)\right|^2, 
\end{equation}
we see that 
\begin{equation}\label{E-cE}
  E(t;u)=\int_{\R^d} \cE(t,\xi;v)\,d\xi 
\end{equation}
by Parseval's identity. 
So we shall derive the total energy estimate \eqref{Eest-Thm} by proving an uniform estimate of $\cE(t,\xi)$ with respect to $\xi$. 
More precisely, $\cE(t,\xi)$ will be estimated with appropriate methods in each region of time-frequency space $\{(t,\xi)\;;\; t\in[0,\infty),\ \xi \in \R^d\}$ divided by the hyper surface $t=\cT_0=\cT_0(\xi;N)$ defined by 
\begin{equation}\label{def_t0}
  \cT_0(\xi;N):= \max\left\{N|\xi|^{-1}-1,0\right\} 
\end{equation}
with a real number $N \ge 1$. 
Here we define \textit{hyperbolic zone} $Z_H=Z_H(N)$ and 
\textit{dissipative zone} (or \textit{parabolic zone}) $Z_D=Z_D(N)$ by 
\begin{equation}\label{Z_H}
  Z_{H}(N):=\left\{(t,\xi) \in [0,\infty) \times \R^d \;;\; 
  t \ge \cT_0(\xi;N)\right\} 
\end{equation}
and 
\begin{equation}\label{Z_D}
  Z_D(N)
 :=\left\{
  (t,\xi) \in [0,\infty) \times \R^d\;;\;
  t \le \cT_0(\xi;N) \right\}.
\end{equation}
Moreover, $Z_H$ will be divided by 
\begin{equation}\label{Z_H1}
  Z_{H1}(N):=
  \left\{(t,\xi) \in [0,\infty) \times B_N 
    \;;\; t \ge \cT_0(\xi;N) \right\} 
\end{equation}
and 
\begin{equation}\label{Z_H2}
  Z_{H2}(N):=\left\{(t,\xi) \in [0,\infty) \times \R^d \setminus B_N \;;\; 
  t \ge \cT_0(\xi;N)\right\}, 
\end{equation}
where 
\[
  B_N:=\left\{ \xi \in \R^d \;;\; |\xi| < N\right\}. 
\]
Here we note that 
$(1+t)|\xi| \le N$, 
$(1+t)|\xi| \ge N$, and $|\xi|\ge N$ hold in 
$Z_D$, $Z_{H1}$, and $Z_{H2}$, respectively.  


\begin{proposition}\label{Prop1}
Assume that (A1), (A2) and (A3) and valid. 
\begin{itemize}
\item[(i)] 
Let $(t,\xi) \in Z_{H}$. 
For any $\ve>0$, there exists a positive constant $K_H$ such that the following estimates hold: 
\begin{equation}\label{est_cE-ZH}
  \cE(t,\xi) 
  \le 
  \begin{cases}
    K_H \(\dfrac{1+t}{1+\cT_0}\)^{-m} \cE(\cT_0,\xi)
      & \text{\textit{ in }}\;\; Z_{H1},
\\
    K_H \exp\(\ve\mu(t)\)(1+t)^{-m} \cE(0,\xi)
      & \text{\textit{ in }}\;\; Z_{H2}. 
  \end{cases}
\end{equation}
\item[(ii)] 
Let $(t,\xi) \in Z_D$. 
If \eqref{b>=0} is valid, then there exists a positive constant $K_D$ such that the following estimate holds: 
\begin{equation}\label{est_cE-ZD1}
  \cE(t,\xi) 
  \le 
  K_D \(|\xi|^2 |v_0(\xi)|^2 
  + \((1+t)^{-2m}+|\xi|^2 \ga(t;m)^2\)|v_1(\xi)|^2\), 
\end{equation}
where 
\begin{equation}
  \ga(t;m)=\int^t_0 (1+s)^{-m}\,ds. 
\end{equation}
\item[(iii)] 
Let $(t,\xi) \in Z_D$. 
If \eqref{0<m<1} is valid, then there exists a positive constant $K_D$ such that the following estimate holds: 
\begin{equation}\label{est_cE-ZD2}
  \cE(t,\xi) 
  \le 
  K_D \(|\xi|^2 |v_0(\xi)|^2 + (1+t)^{-2m}|v_1(\xi)|^2\). 
\end{equation}
\end{itemize}
\end{proposition}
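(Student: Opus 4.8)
The proof treats \eqref{v} as a family of ODEs in $t$ parametrised by $\xi$, analysed separately in each zone. In the hyperbolic zone I would first perform the (differentiation-free, hence compatible with the mere Hölder regularity of $\si_0$) diagonalisation $w_\pm := \pa_t v \pm \i|\xi| v$, under which \eqref{v} becomes the first-order system $\pa_t w_\pm = (\pm\i|\xi| - b/2)w_\pm - (b/2) w_\mp$. Since $\cE = \frac12(|w_+|^2 + |w_-|^2)$, a direct computation gives the exact energy identity
\[
  \frac{d}{dt}\cE(t,\xi) = -b(t)\,\cE(t,\xi) - b(t)\,\text{Re}\(\overline{w_+}w_-\).
\]
Integrating the first term produces the decay factor: writing $b = m/(1+t) + \si(\eta)/(1+t)$, the $m$-part gives exactly $((1+t)/(1+\cT_0))^{-m}$ (with $\cT_0=0$ in $Z_{H2}$), while the oscillatory part contributes a factor bounded above and below by positive constants thanks to \eqref{B0}. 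Thus everything reduces to controlling the cross term $\int b\,\text{Re}(\overline{w_+}w_-)\,ds$.

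The crux, and the main obstacle, is precisely this cross term. Since $\overline{w_+}w_-$ carries the oscillation $\e^{-2\i|\xi|s}$ while $\si(\eta(s)) = \sum_n(\al_n\cos(n\pi\eta/T) + \be_n\sin(n\pi\eta/T))$ oscillates with instantaneous frequency $\propto\eta'(s)$, the products appearing in $\int \frac{\si(\eta(s))}{1+s}\,\text{Re}(\overline{w_+}w_-)\,ds$ carry phases $\frac{n\pi}{T}\eta(s)\pm2|\xi|s$. The ``$+$'' phases are non-stationary and, after integration by parts, gain a factor $1/|\xi| \le (1+t)/N$ that renders them harmless once $N$ is large; the ``$-$'' phases are stationary exactly at the resonance times $s_*$ with $\eta'(s_*) = 2T|\xi|/(n\pi)$, where by the convexity in (A3) each such $s_*$ is unique. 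I would localise near each $s_*$ and bound the resonant contribution in terms of $\frac{1}{(1+s_*)\eta''(s_*)} \le \mu(t)$; summing over $n$ is legitimate because $A_1 = \sum(|\al_n|+|\be_n|)<\infty$ by (A1), and taking $N$ large makes the resulting coefficient of $\mu(t)$ as small as the prescribed $\ve$. This yields the factor $\exp(\ve\mu(t))$ in $Z_{H2}$. In $Z_{H1}$ the frequencies satisfy $|\xi|<N$, so all resonance times lie in the bounded window $s_*\le(\eta')^{-1}(2TN/\pi)$; the total resonant growth is then bounded by a constant depending only on $N$, which I absorb into $K_H$, explaining why no $\exp(\ve\mu)$ appears there.

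For the dissipative zone I would instead exploit that $|\xi|$ is small relative to the damping, $(1+t)|\xi|\le N$, so that the $|\xi|^2 v$ term acts as a lower-order forcing in the first-order equation for $\pa_t v$. Solving $\pa_t(\pa_t v) = -b\,\pa_t v - |\xi|^2 v$ by the integrating factor $\exp(-\int_0^s b)$, and noting that $\exp(-\int_0^t b) \le C(1+t)^{-m}$ by \eqref{B0}, yields $|\pa_t v| \lesssim (1+t)^{-m}|v_1| + |\xi|^2\!\int_0^t(\cdots)\,ds$ and, after one more integration, $|v| \lesssim |v_0| + \ga(t;m)|v_1|$. Collecting these into $\cE = |\xi|^2|v|^2 + |\pa_t v|^2$ gives \eqref{est_cE-ZD1}; under \eqref{b>=0} the sign $b\ge0$ keeps $\exp(-\int b)\le1$ and the argument is clean. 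For part (iii) the extra hypothesis $0<m<1$ lets me simplify: since $\ga(t;m)\sim(1+t)^{1-m}$, one has $|\xi|^2\ga(t;m)^2 \le N^2(1+t)^{-2m}$ by $(1+t)|\xi|\le N$, so the $|\xi|^2\ga^2$ term is absorbed into $(1+t)^{-2m}|v_1|^2$, yielding the cleaner estimate \eqref{est_cE-ZD2}. Here the oscillation of $\si$ causes no resonance because the solution does not oscillate in $Z_D$; its effect enters only through the bounded integral \eqref{B0}.
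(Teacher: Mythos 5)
Your plan is essentially sound, and for part (i) it is in substance the paper's own argument: the diagonalisation $w_\pm=\pa_t v\pm\i|\xi|v$ is the Ghisi--Gobbino polar representation in disguise (with $|\xi|v=\rho\cos\th$, $\pa_t v=\rho\sin\th$ one computes $\Re\(\ol{w_+}w_-\)=-\cE\cos 2\th$, so your identity is exactly $\pa_t\cE=-2b\,\cE\sin^2\th$), and the Fourier expansion of $\si_0$, the non-stationary/stationary phase dichotomy, the resonant window bounded via $1/((1+s_*)\eta''(s_*))\le\mu(t)$, and the summation using $A_1<\infty$ reproduce the paper's oscillatory-integral lemmas. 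Two points need repair there. First, the coefficient of $\mu(t)$ is \emph{not} made small by taking $N$ large: it is proportional to the width $\ka$ of the resonance window $\{s\,;\,|\eta'(s)-2\la/n|\le\ka/n\}$, and one must choose $\ka=\ve/(2A_1)$, paying a large but $t$-independent constant $O(\ka^{-1})$ from the integration by parts outside the window; $N$ only controls the non-resonant ``$+$''-phase and boundary terms. Second, you should verify that the residual phase $h=\th+|\xi|t$ satisfies $|\pa_t h|\le b_1/(1+t)$ (immediate from the equation for $\th$); this is what licenses writing the cross term as $\cos(2|\xi|s+2h)$ with slowly varying $h$, and it is not automatic from the shape of $w_\pm$ alone.

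For parts (ii)--(iii) you take a genuinely different route. The paper proves (ii) by a monotonicity argument that leans on $b\ge0$ (two polar solutions with $\rho_j(t)\le\rho_j(0)$, plus the decreasing quantity $\sin(\th_2)\exp\(\int_0^t b\cos^2\th_2\)$ to extract the $(1+t)^{-m}$ and $\ga(t;m)$ components separately), and proves (iii) by the Volterra/Neumann-series argument you describe. Your unified Duhamel approach does work for both, because its only inputs are the two-sided bound $\beta(t)\asymp(1+t)^{-m}$ coming from \eqref{B0} (not the sign of $b$) together with $(1+t)|\xi|\le N$, and the cancellation $\beta(\tau)^{-1}\int_\tau^t\beta(s)\,ds\lesssim t-\tau$ closes the iteration for every $m>0$; this buys a single proof of (ii) and (iii) at the cost of cruder constants. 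The one step you must not elide is precisely that closure: ``after one more integration, $|v|\lesssim|v_0|+\ga(t;m)|v_1|$'' is circular as written, since the bound on $|v|$ feeds the bound on $\pa_t v$ and conversely; you need the full Neumann series (with the factorial gain $\int_0^t(t-\tau)\tau^{2l}\,d\tau=t^{2l+2}/((2l+1)(2l+2))$), or at least a Gronwall argument with the kernel bound $|\xi|^2(t-\tau)$ integrated to $N^2/2$, to break the circle. Your absorption of $|\xi|^2\ga(t;m)^2$ into $(1+t)^{-2m}$ under $0<m<1$ is correct and is exactly the difference between \eqref{est_cE-ZD1} and \eqref{est_cE-ZD2}.
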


Denoting the solutions to \eqref{v} with the initial data 
$(\Re v_0, \Re v_1)$ and $(\Im v_0, \Im v_1)$ 
by $v_\Re$ and $v_\Im$ respectively, 
the solution $v$ to \eqref{v} with complex-valued initial data 
$(v_0,v_1)$ is represented by 
$v=v_\Re + i v_\Im$. 
Then we have 
\begin{align*}
  \cE(t,\xi;v)
 =\cE(t,\xi;v_\Re)+\cE(t,\xi;v_\Im), 
\end{align*}
therefore, we can restrict ourselves to the case that the initial data and the solution to \eqref{v} are real-valued without loss of generality. 

We introduce the polar coordinate representation to the solution of
\begin{equation}\label{ve}
  \(\pa_t^2 + |\xi|^2 + b(t) \pa_t\) v(t,\xi) = 0,
\end{equation}
which was introduced in \cite{GG25}. 
Let $v=v(t,\xi)$ be a solution to the equation of \eqref{ve}, and consider the following polar coordinate representation: 
\begin{equation}\label{v-rho-th}
  \begin{pmatrix} 
    |\xi|v(t,\xi) \\ \pa_t v(t,\xi)
  \end{pmatrix}
 =\begin{pmatrix}
    \rho \cos \th \\ \rho \sin \th
  \end{pmatrix},
\end{equation}
where $\th=\th(t,\xi)$ is real valued, 
and $\rho=\rho(t,\xi)$ is non-negative. 
Then we have the following lemma. 

\begin{lemma}[Ghisi-Gobbino\cite{GG25}]\label{lemma_rho-th}
Let $0\le t_0 < t$ and $\cE(t_0,\xi)>0$. 
$\th$ and $\rho$ defied by \eqref{v-rho-th} are the solutions to the following initial value problem: 
\begin{equation}\label{ode_rho-th}
\begin{dcases}
  \pa_t \rho(t,\xi) = -b(t) \rho(t,\xi) \sin^2(\th(t,\xi)), \\
  \pa_t \th(t,\xi)
   =-|\xi| - b(t) \sin(\th(t,\xi))\cos(\th(t,\xi)),
\end{dcases}
\end{equation}
\begin{equation}\label{ic_rho-th}
  \rho(t_0,\xi) = \sqrt{\cE(t_0,\xi)},
  \quad
  \th(t_0,\xi) = \cos^{-1}\(\frac{|\xi|v(t_0,\xi)}{\sqrt{\cE(t_0,\xi)}}\). 
\end{equation}
Moreover, we have 
\begin{equation}\label{cE-th}
  \rho(t,\xi)^2
 =\cE(t,\xi)=\cE(t_0,\xi) \exp\(-2\int^t_{t_0} b(s)\sin^2(\th(s,\xi))\,ds\). 
\end{equation}
\end{lemma}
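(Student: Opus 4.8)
The plan is to reduce the second-order equation \eqref{ve} to a first-order planar system and then recognize \eqref{v-rho-th} as its passage to polar coordinates. First I would set $x:=|\xi|v$ and $y:=\pa_t v$, so that by definition $\cE(t,\xi)=x^2+y^2$. Differentiating and using \eqref{ve} in the form $\pa_t^2 v = -|\xi|^2 v - b(t)\pa_t v$ gives the linear system
\begin{equation}
  \pa_t x = |\xi|\,y,
  \qquad
  \pa_t y = -|\xi|\,x - b(t)\,y.
\end{equation}
Since $\rho^2=x^2+y^2=\cE$ by construction, the non-negative root yields $\rho=\sqrt{\cE}$, which is exactly the claimed identity $\rho(t,\xi)^2=\cE(t,\xi)$ and the initial value $\rho(t_0,\xi)=\sqrt{\cE(t_0,\xi)}$; likewise $\cos\th = x/\rho = |\xi|v/\sqrt{\cE}$ evaluated at $t_0$ produces the stated initial angle.

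Before differentiating the polar relations I must guarantee that $\th$ is a well-defined $C^1$ function and that dividing by $\rho$ is legitimate, i.e. that $\rho>0$ throughout. This is the one genuine obstacle, and I would dispose of it by a uniqueness argument: if $\cE(t_1,\xi)=0$ at some $t_1$, then $(x,y)$ vanishes at $t_1$, hence $(v,\pa_t v)$ vanishes there, and by uniqueness for the linear ODE \eqref{ve} we would get $v\equiv 0$, contradicting $\cE(t_0,\xi)>0$. Therefore $\cE(t,\xi)>0$, the curve $t\mapsto(x(t),y(t))$ never meets the origin, and since it is $C^1$ one may select a $C^1$ branch of the polar angle $\th$ together with $\rho=\sqrt{\cE}>0$. (The degenerate case $\xi=0$, where $x\equiv 0$, is handled directly: then $\cE=y^2$ with $\pa_t y=-by$, so $\cE$ stays positive and $\th\equiv\pm\pi/2$ solves the claimed angular equation trivially.)

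With $\rho>0$ and $\th\in C^1$ in hand, the derivation of the system \eqref{ode_rho-th} is a routine computation. Substituting $x=\rho\cos\th$, $y=\rho\sin\th$ into the first-order system and forming the combinations $\cos\th\cdot(\pa_t x)+\sin\th\cdot(\pa_t y)$ and $-\sin\th\cdot(\pa_t x)+\cos\th\cdot(\pa_t y)$ isolates, respectively,
\begin{equation}
  \pa_t\rho = -b(t)\,\rho\,\sin^2\th
  \qquad\text{and}\qquad
  \pa_t\th = -|\xi| - b(t)\,\sin\th\cos\th,
\end{equation}
the cross terms $\pm|\xi|\rho\sin\th\cos\th$ cancelling in the first, and the $|\xi|$-terms combining via $\sin^2\th+\cos^2\th=1$ in the second.

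Finally, the exponential energy formula \eqref{cE-th} follows from the $\rho$-equation alone: since $\cE=\rho^2$, we have $\pa_t\cE = 2\rho\,\pa_t\rho = -2b(t)\sin^2\th\,\cE$, a linear scalar ODE in $\cE$ whose integration from $t_0$ to $t$ gives the claimed identity $\cE(t,\xi)=\cE(t_0,\xi)\exp(-2\int_{t_0}^t b(s)\sin^2\th(s,\xi)\,ds)$. I expect the only delicate point to be the non-vanishing of $\rho$ and the consequent $C^1$ selection of the angle; once these are secured, the remaining steps are purely mechanical trigonometric identities followed by a one-line integration.
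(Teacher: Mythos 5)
Your proof is correct and follows essentially the same route as the paper: pass to polar coordinates, verify the two ODEs in \eqref{ode_rho-th} by direct differentiation, and then integrate the scalar linear equation for $\cE=\rho^2$ to get \eqref{cE-th}. The only difference is organizational — you extract $\pa_t \th$ from the combination $-\sin\th\,\pa_t(|\xi|v)+\cos\th\,\pa_t(\pa_t v)=\rho\,\pa_t\th$, which requires dividing only by $\rho>0$ (a positivity you justify via uniqueness for \eqref{ve}), whereas the paper factors the equation as $\rho\cos\th\,\bigl(b(t)\cos\th\sin\th+\pa_t\th+|\xi|\bigr)=0$ and divides by $\rho\cos\th$; your variant is slightly more careful at the times where $\cos\th=0$.
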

\begin{proof} 
$\rho(t,\xi)^2=\cE(t,\xi)$ is clear from the definition. 
Differentiating both sides of $\rho(t,\xi)^2 = \cE(t,\xi)$ with respect to $t$, we have 
\begin{align*}
2\rho \pa_t \rho
 =2 \(\pa_t^2 v\)\pa_t v + 2|\xi|^2 \(\pa_t v\)v
 =-2b(t)(\pa_t v)^2
 =-2b(t)\rho^2 \sin^2\th, 
\end{align*}
it follows that 
\begin{equation}\label{rho_t-rho}
  \pa_t \rho(t,\xi) = -b(t)\rho(t,\xi) \sin^2\(\th(t,\xi)\),
\end{equation}
that is, 
\begin{equation}\label{pa_t-cE}
  \pa_t \cE(t,\xi) = -2b(t) \sin^2(\th(t,\xi)) \cE(t,\xi). 
\end{equation}
Here \eqref{cE-th} is derived as the solution of \eqref{pa_t-cE}. 
Moreover, noting that 
\begin{align*}
  (\pa_t \rho)\sin \th
 =-b(t)\rho\sin^3\th
 =b(t)\rho \cos^2\th \sin\th -b(t)\rho \sin \th, 
\end{align*}
we have 
\begin{align*}
  0
&=\pa_t^2 v + |\xi|^2 v + b(t)\pa_t v
\\
&=(\pa_t \rho)\sin \th + \rho(\pa_t \th)\cos\th
 +|\xi|\rho \cos\th + b(t)\rho\sin\th
\\
&=b(t)\rho \cos^2\th \sin\th 
 + \rho(\pa_t \th)\cos\th
 +|\xi|\rho \cos\th 
\\
&=\rho\cos\th
  \( b(t) \cos\th \sin\th + \pa_t \th + |\xi| \), 
\end{align*}
it follows that 
\begin{equation}\label{th_t-|xi|}
  \pa_t \th = -|\xi| - b(t) \cos\th \sin\th. 
\end{equation}
\end{proof}

A remarkable point of Lemma \ref{lemma_rho-th} is that the effect of dissipation is explicitly described by an integral. 
Actually, there are problems that $\th(t,\xi)$ is a solution to a non-linear problem, and that $\cE(t,\xi;v)$ must be expressed by $\cE(t_0,\xi;v)$, in other words, it may not be useful to consider near $|\xi|=0$, 
but in any cases, the representation \eqref{cE-th} is extremely useful for precise estimates in a specific region of the time-frequency space. 

%
\subsection{Proof of Proposition \ref{Prop1} (i)}
%
The following proof of Proposition \ref{Prop1}  (i)  is an improvement of the method used in the proof of Theorem \ref{ThmGG}.

From now on, we may assume that $\si(\eta)$ is $2\pi$-periodic function without loss of generality, and denote 
\[
  {\cs}_1(\eta):=\cos(\eta)
  \;\;\text{ \textit{and} }\;\;
  {\cs}_2(\eta):=\sin(\eta)
\]
for convenience. 

\begin{lemma}\label{Lemma_ossint}
Let $t_0 \ge 0$, $n \in \N$ and $[\tau_{-}, \tau_{+}] \subset [t_0,\infty)$. 
If $\phi \in C^2([\tau_{-},\tau_{+}])$ and 
$\psi \in C^1([\tau_{-},\tau_{+}])$ satisfy
\begin{equation}\label{ass_phi-psu}
  |\phi'(t)| \ge \phi_0, \;\;
  \phi''(t) \ge 0
  \;\;\text{ and }\;\;
  |\psi'(t)| \le \frac{\Psi_0}{1+t}
\end{equation}
for positive real numbers $\phi_0$ and $\Psi_0$, 
then the following inequalities hold: 
\begin{equation}
  \left|\int_{\tau_{-}}^{\tau_{+}} 
  \frac{\cs_j(n \phi(s)) \cs_k(\psi(s))}{1+s}\,ds\right|
  \le 
  \frac{\Psi_0+4}{n\phi_0(1+t_0)}
  \quad (j,k = 1,2). 
\end{equation}
\end{lemma}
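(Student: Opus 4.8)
The plan is to prove this by the standard technique of oscillatory integral estimation via integration by parts (the non-stationary phase / van der Corput philosophy). The integrand is a product of a rapidly oscillating factor $\cs_j(n\phi(s))$, a slowly varying factor $\cs_k(\psi(s))$, and the decaying weight $1/(1+s)$. The key structural observation is that $\phi$ plays the role of a phase with derivative bounded below by $\phi_0$, so dividing and multiplying by $n\phi'(s)$ lets me rewrite $\cs_j(n\phi(s)) = \pm\frac{1}{n\phi'(s)}\frac{d}{ds}\cs_{j'}(n\phi(s))$ for the conjugate index $j'$, and then integrate by parts.

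First I would write, say for the case $j=2$ (the $\sin$ case), the identity
\begin{equation}
  \int_{\tau_-}^{\tau_+} \frac{\sin(n\phi(s))\cs_k(\psi(s))}{1+s}\,ds
  = -\frac{1}{n}\int_{\tau_-}^{\tau_+}
    \frac{\cs_k(\psi(s))}{(1+s)\phi'(s)}
    \frac{d}{ds}\cos(n\phi(s))\,ds,
\end{equation}
and similarly with $\cos$/$\sin$ interchanged in the other case. Integrating by parts moves the derivative off the oscillatory term onto the amplitude $g(s) := \cs_k(\psi(s))/((1+s)\phi'(s))$, producing a boundary term plus an integral of $g'(s)\cos(n\phi(s))$. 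The boundary term is controlled by $|g(\tau_\pm)| \le 1/((1+t_0)\phi_0)$ using $|\phi'|\ge\phi_0$, $|\cs_k|\le 1$, and $\tau_\pm\ge t_0$.

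The heart of the estimate is bounding $\int |g'(s)|\,ds$. Here I would compute
\begin{equation}
  g'(s) = \frac{-\psi'(s)\cs_{k'}(\psi(s))}{(1+s)\phi'(s)}
         - \frac{\cs_k(\psi(s))}{(1+s)^2\phi'(s)}
         - \frac{\cs_k(\psi(s))\phi''(s)}{(1+s)(\phi'(s))^2},
\end{equation}
where $\cs_{k'}$ denotes the derivative of $\cs_k$ up to sign. The three terms are estimated separately: the first by $\Psi_0/((1+s)^2\phi_0)$ using $|\psi'(s)|\le\Psi_0/(1+s)$; the second by $1/((1+s)^2\phi_0)$; and the third is the crucial one. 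Because $\phi''\ge 0$ and $\phi'\ge\phi_0>0$ (taking the $\phi'>0$ branch; the $\phi'<0$ branch is symmetric), the third term is sign-definite, so $\int_{\tau_-}^{\tau_+}\phi''(s)/((1+s)(\phi'(s))^2)\,ds \le \frac{1}{1+t_0}\int \phi''(s)/(\phi'(s))^2\,ds = \frac{1}{1+t_0}[-1/\phi'(s)]_{\tau_-}^{\tau_+} \le \frac{1}{(1+t_0)\phi_0}$ by monotonicity of $\phi'$. The first two terms integrate to at most $(\Psi_0+1)/((1+t_0)\phi_0)$ since $\int_{\tau_-}^{\tau_+}(1+s)^{-2}\,ds \le 1/(1+t_0)$. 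Collecting the boundary term, the $1/n$ prefactor, and these three contributions gives the bound $(\Psi_0 + 1 + 1 + 1)/(n\phi_0(1+t_0)) \le (\Psi_0+4)/(n\phi_0(1+t_0))$, as claimed, uniformly in $j,k$.

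The main obstacle — really the only subtle point — is handling the sign of $\phi'$ correctly so that the convexity hypothesis $\phi''\ge 0$ genuinely controls the $\phi''/(\phi')^2$ term: one must exploit that $\phi''\ge 0$ makes $1/\phi'$ monotone and that $|\phi'|\ge\phi_0$ prevents the denominator from vanishing, which is exactly why both conditions in \eqref{ass_phi-psu} appear together. I expect the $\phi'<0$ case to be dealt with by the substitution $\phi\mapsto-\phi$ (which swaps the roles inside $\cs_j$ but leaves the bound intact), so that it suffices to treat $\phi'\ge\phi_0$. The remaining arithmetic — splitting into the four index pairs $(j,k)$ and verifying each is majorized by the same constant — is routine once the integration-by-parts identity and the three-term amplitude estimate are in place.
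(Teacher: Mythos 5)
Your proposal is correct and follows essentially the same route as the paper: integrate by parts against the fast phase $n\phi$, bound the two boundary terms by $1/(\phi_0(1+t_0))$ each, and control the three resulting integrals by $\Psi_0$, $1$, and $1$ times $1/(\phi_0(1+t_0))$, the last via the telescoping $\int \phi''/(\phi')^2\,ds = [-1/\phi']$ made sign-definite by $\phi''\ge 0$ and $|\phi'|\ge\phi_0$. The only nit is your final tally $\Psi_0+1+1+1$: the boundary contributes $2$ (one per endpoint), so the correct total is exactly the claimed $\Psi_0+4$.
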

%
\begin{proof}
Integrating by parts, we have 
\begin{align*}
  n\int_{\tau_{-}}^{\tau_{+}} 
  \frac{\cos(n\phi(s)) \sin(\psi(s))}{1+s}\,ds
&=\(\sin(n\phi(\tau_{+}))\frac{\sin(\psi(\tau_{+}))}{(1+\tau_{+})\phi'(\tau_{+})}
 -\sin(n\phi(\tau_{-}))\frac{\sin(\psi(\tau_{-}))}{(1+\tau_{-})\phi'(\tau_{-})}\)
\\
&\quad
 -\int_{\tau_{-}}^{\tau_{+}} 
  \sin(n\phi(s)) \frac{\psi'(s)\cos(\psi(s))}{(1+s)\phi'(s)}\,ds
\\
&\quad +\int_{\tau_{-}}^{\tau_{+}} 
  \sin(n\phi(s)) \frac{\sin(\psi(s))}{(1+s)^2 \phi'(s)}\,ds
\\
&\quad +\int_{\tau_{-}}^{\tau_{+}} 
  \sin(n\phi(s)) \frac{\sin(\psi(s)) \phi''(s)}{(1+s) (\phi'(s))^2}\,ds
\\
&=:I_1+I_2+I_3+I_4. 
\end{align*}
By \eqref{ass_phi-psu}, we have 
\begin{align*}
  |I_1| \le \frac{1}{\phi_0(1+\tau_{+})}+ \frac{1}{\phi_0(1+\tau_{-})}
  \le \frac{2}{\phi_0(1+t_0)},
\end{align*}
\begin{align*}
  |I_2| 
  \le \int_{\tau_{-}}^{\tau_{+}} \frac{\Psi_0}{\phi_0 (1+s)^2}\,ds
  = \frac{\Psi_0}{\phi_0}\(\frac{1}{1+\tau_{-}}-\frac{1}{1+\tau_{+}}\)
  \le \frac{\Psi_0}{\phi_0 (1+t_0)},
\end{align*}
\begin{align*}
  |I_3|
  \le \int_{\tau_{-}}^{\tau_{+}} \frac{1}{\phi_0 (1+s)^2}\,ds
  \le \frac{1}{\phi_0 (1+\tau_{-})}
  \le \frac{1}{\phi_0 (1+t_0)} 
\end{align*}
and 
\begin{align*}
  |I_4|
  \le 
  \frac{1}{1+\tau_{-}}\left|\int_{\tau_{-}}^{\tau_{+}} 
    \frac{\phi''(s)}{\phi'(s)^2}\,ds\right|
 =\frac{1}{1+t_0}
  \left|-\frac{1}{\phi'(\tau_{+})}+\frac{1}{\phi'(\tau_{-})}\right|
 \le \frac{1}{\phi_0 (1+t_0)}. 
\end{align*}
Summarizing the estimates above, we have 
\begin{align*}
 n\left|\int_{\tau_{-}}^{\tau_{+}} 
  \frac{\cos(n\phi(s)) \sin(\psi(s))}{1+s}\,ds\right|
  \le
  \frac{\Psi_0+4}{\phi_0(1+t_0)}.
\end{align*}
Analogously, we have 
\begin{align*}
 n\left|\int_{\tau_{-}}^{\tau_{+}} 
  \frac{\cs_j(n\phi(s)) \cs_k(\psi(s))}{1+s}\,ds\right|
  \le
  \frac{\Psi_0+4}{\phi_0(1+t_0)}
\end{align*}
for $j,k = 1,2$. 
\end{proof}

\begin{lemma}\label{Lemm-phi_n}
Let $n \in \N$, $t_0 \ge 0$, 
$\ka$ and $\la$ be positive real numbers. 
We define $\phi_n(t)$ on $[t_0,\infty)$ and 
$\tau_{n \pm}$ by 
\[
  \phi_n(t):= \eta(t)-\frac{2 \la t}{n}, 
\]
\begin{equation}\label{def_tau-}
  \tau_{n-}
 :=\begin{dcases}
    t_0 
      &\text{ \textit{ if} }\; \eta'(t_0) > \frac{2\la - \ka}{n},\\
    (\eta')^{-1}\(\frac{2\la-\ka}{n}\)
      &\text{ \textit{ if} }\; \eta'(t_0) \le \frac{2\la - \ka}{n}
  \end{dcases}
\end{equation}
and 
\begin{equation}\label{def_tau+}
  \tau_{n+}
 :=\begin{dcases}
    t_0 
      &\text{ \textit{ if} }\; \eta'(t_0) > \frac{2\la + \ka}{n},\\
    (\eta')^{-1}\(\frac{2\la+\ka}{n}\)
      &\text{ \textit{ if} }\; \eta'(t_0) \le \frac{2\la + \ka}{n}
  \end{dcases}
\end{equation}
respectively. 
Then the following inequalities hold: 
\begin{equation}\label{eq1-Lemm-phi_n}
  \phi_n'(t) 
  \begin{dcases}
    \le -\frac{\ka}{n} & (t \le \tau_{n-}), \\ \ge \frac{\ka}{n} & (t \ge \tau_{n+}).
  \end{dcases}
\end{equation}
\end{lemma}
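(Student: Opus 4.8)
The plan is to reduce the whole statement to the strict monotonicity of $\eta'$, which is the essential structural input coming from (A3). First I would differentiate the definition of $\phi_n$ to obtain
\[
  \phi_n'(t) = \eta'(t) - \frac{2\la}{n},
\]
so that the two claimed bounds become the equivalent level statements
\[
  \phi_n'(t) \le -\frac{\ka}{n} \iff \eta'(t) \le \frac{2\la-\ka}{n},
  \qquad
  \phi_n'(t) \ge \frac{\ka}{n} \iff \eta'(t) \ge \frac{2\la+\ka}{n}.
\]
Everything is thus governed by where $\eta'(t)$ sits relative to the two thresholds $(2\la\mp\ka)/n$.

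By (A3) we have $\eta''(t) > 0$ for all $t \ge 0$, so $\eta'$ is strictly increasing and, being $C^1$, continuous on $[0,\infty)$; hence it is a bijection onto its image and $(\eta')^{-1}$ is well-defined and strictly increasing there. This is exactly what gives meaning to the definitions of $\tau_{n\pm}$: each is the pullback under $\eta'$ of the corresponding threshold, reset to $t_0$ in the case where that threshold already lies below $\eta'(t_0)$.

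For the lower threshold I would argue by the two branches defining $\tau_{n-}$. When $\eta'(t_0) \le (2\la-\ka)/n$, the point $\tau_{n-} = (\eta')^{-1}((2\la-\ka)/n)$ satisfies $\tau_{n-} \ge t_0$, and for $t_0 \le t \le \tau_{n-}$ monotonicity of $\eta'$ gives $\eta'(t) \le \eta'(\tau_{n-}) = (2\la-\ka)/n$, i.e. $\phi_n'(t) \le -\ka/n$. When instead $\eta'(t_0) > (2\la-\ka)/n$ we set $\tau_{n-} = t_0$, so within $[t_0,\infty)$ the region $t \le \tau_{n-}$ collapses to the single point $t_0$, which is immaterial since this interval enters only as a range of integration in Lemma \ref{Lemma_ossint}. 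The upper threshold is handled in the same way: when $\eta'(t_0) \le (2\la+\ka)/n$ one takes $\tau_{n+} = (\eta')^{-1}((2\la+\ka)/n) \ge t_0$ and monotonicity yields $\eta'(t) \ge (2\la+\ka)/n$, hence $\phi_n'(t) \ge \ka/n$, for all $t \ge \tau_{n+}$; when $\eta'(t_0) > (2\la+\ka)/n$ one has $\tau_{n+} = t_0$, and here the bound $\phi_n'(t) \ge \ka/n$ in fact holds on the whole of $[t_0,\infty)$ by the same monotonicity.

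I do not expect a genuine obstacle; once $\eta''>0$ is invoked the result is immediate. The only care needed is bookkeeping. One must check that $(\eta')^{-1}$ is evaluated inside the range of $\eta'$, which is precisely why the definitions of $\tau_{n\pm}$ revert to $t_0$ whenever the level sits below $\eta'(t_0)$, and which causes no difficulty at the upper end because $\eta'$ is unbounded in the settings under consideration, so every level is attained. One should also keep in mind that the single degenerate case $\tau_{n-}=t_0$ leaves only the endpoint $t_0$ in the region $t \le \tau_{n-}$, where the first bound is used only up to a measure-zero set and therefore plays no role in the subsequent oscillatory-integral estimates.
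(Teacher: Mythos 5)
Your proof is correct and follows essentially the same route as the paper: differentiate $\phi_n$, invoke the strict monotonicity of $\eta'$ from (A3), and split into the branches of the definitions of $\tau_{n\pm}$ (the paper writes the monotonicity step as $\phi_n'(t) = \mp\ka/n - (\eta'(\tau_{n\mp}) - \eta'(t))$ rather than as a threshold comparison, but this is the same argument). Your explicit remark that the case $\tau_{n-}=t_0$ degenerates to a single point, where the stated inequality is vacuous for the later integral estimates, is a point the paper's proof passes over silently.
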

%
\begin{proof}
We note that $(\eta')^{-1}$ is strictly increasing since $\eta'$ is strictly increasing by (A3). 
If $t \le \tau_{n-}$, and thus $t_0 < \tau_{n-}$, then $\eta'(\tau_{n-})=(2\la-\ka)/n$, and thus
\begin{align*}
  \phi_n'(t)
=\eta'(t) - \frac{2\la}{n} 
=-\frac{\ka}{n} - \(\eta'(\tau_{n-}) - \eta'(t)\)
\le -\frac{\ka}{n}. 
\end{align*}
Analogously, if $t_0 \le \tau_{n+} < t$, then 
$\eta'(\tau_{n+})=(2\la+\ka)/n$, and thus 
\begin{align*}
  \phi_n'(t)
=\eta'(t) - \frac{2\la}{n} 
=\frac{\ka}{n} - \(\eta'(\tau_{n+}) - \eta'(t)\)
\ge \frac{\ka}{n}. 
\end{align*}
If $\tau_{n+} \le t_0 < t$, then $\eta'(\tau_{n+})=\eta'(t_0) \ge (2\la+\ka)/n$, and thus
\begin{align*}
  \phi_n'(t)
 =\eta'(t) - \frac{2\la}{n} 
 > \eta'(t_0) - \frac{2\la}{n} 
 > \frac{2\la + \ka}{n} - \frac{2\la}{n} 
 = \frac{\ka}{n}. 
\end{align*}
\end{proof}

\begin{lemma}\label{Lemm-log(tau+/tau-)}
Let $n\in \N$ and $\tau_{n\pm}$ be defined by \eqref{def_tau-} and \eqref{def_tau+}. 
For any $\tau_-$ and $\tau_+$ satisfy 
$\tau_{n-} \le \tau_- < \tau_+ \le \tau_{n+}$, 
the following inequality holds: 
\[
  \log \(\frac{1+\tau_{+}}{1+\tau_{-}}\) 
  \le \frac{2\ka}{n}\mu(\tau_+). 
\]
\end{lemma}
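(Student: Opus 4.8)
The plan is to express the logarithm as an integral and convert the weight $1/(1+s)$ into $\eta''(s)$ through the definition of $\mu$. First I would write
\[
  \log\left(\frac{1+\tau_{+}}{1+\tau_{-}}\right) = \int_{\tau_{-}}^{\tau_{+}} \frac{ds}{1+s}.
\]
Since every $s$ in the range of integration satisfies $s \le \tau_{+}$, the definition $\mu(\tau_{+}) = \max_{0 \le \tau \le \tau_{+}} \{1/((1+\tau)\eta''(\tau))\}$ gives the pointwise bound $1/((1+s)\eta''(s)) \le \mu(\tau_{+})$, and because $\eta''(s) > 0$ by (A3) this rearranges to $1/(1+s) \le \mu(\tau_{+})\,\eta''(s)$ throughout $[\tau_{-},\tau_{+}]$. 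Integrating this inequality and applying the fundamental theorem of calculus yields
\[
  \log\left(\frac{1+\tau_{+}}{1+\tau_{-}}\right)
  \le \mu(\tau_{+}) \int_{\tau_{-}}^{\tau_{+}} \eta''(s)\,ds
  = \mu(\tau_{+})\left(\eta'(\tau_{+}) - \eta'(\tau_{-})\right).
\]

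It then remains to show $\eta'(\tau_{+}) - \eta'(\tau_{-}) \le 2\ka/n$. Since $\eta'$ is strictly increasing by (A3) and $\tau_{n-} \le \tau_{-} < \tau_{+} \le \tau_{n+}$, monotonicity gives $\eta'(\tau_{+}) - \eta'(\tau_{-}) \le \eta'(\tau_{n+}) - \eta'(\tau_{n-})$, so it suffices to control the right-hand side using the definitions \eqref{def_tau-} and \eqref{def_tau+}. I would argue by cases: in the generic situation $\eta'(\tau_{n+}) = (2\la+\ka)/n$ and $\eta'(\tau_{n-}) = (2\la-\ka)/n$, whose difference is exactly $2\ka/n$; if instead $\tau_{n-} = t_0$ (because $\eta'(t_0) > (2\la-\ka)/n$) then the subtracted value only increases, so the difference is strictly smaller than $2\ka/n$; and if $\tau_{n+} = t_0$, then $\eta'(t_0) > (2\la+\ka)/n > (2\la-\ka)/n$ forces $\tau_{n-} = t_0$ as well, collapsing $[\tau_{n-},\tau_{n+}]$ to a point so that no admissible pair $\tau_{-} < \tau_{+}$ exists and the assertion is vacuous. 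Combining the two displayed estimates then gives the claimed bound.

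The argument is essentially routine once the weight conversion is in place; the only step needing genuine care is the final case analysis, and there the delicate point is not an estimate but the observation that the case $\tau_{n+} = t_0$ produces a degenerate interval and is therefore vacuously covered rather than requiring a separate bound. I do not expect any real obstacle beyond bookkeeping the cases correctly.
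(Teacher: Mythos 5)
Your proof is correct and follows essentially the same route as the paper: both arguments convert the logarithmic increment into $\mu(\tau_+)$ times the increment $\eta'(\tau_{n+})-\eta'(\tau_{n-})\le 2\ka/n$, and both handle the same three endpoint cases (including the degenerate one $\tau_{n+}=\tau_{n-}=t_0$). The only cosmetic difference is that you integrate the pointwise bound $1/(1+s)\le\mu(\tau_+)\eta''(s)$ directly in $t$, whereas the paper applies the mean value theorem to $\log\(1+(\eta')^{-1}(s)\)$ in the variable $s=\eta'(t)$; these are the same estimate in different clothing.
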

\begin{proof}
$\tau_+$ can be represented by $\nu \in (-1,1]$ as follow: 
\[
  \tau_+ = (\eta')^{-1}\(\frac{2\la + \nu \ka}{n}\). 
\]
Here we note that the following equality holds: 
\begin{align*}
  \left. \frac{d}{ds}\(\eta'\)^{-1}(s)\right|_{s=\eta'(t)}
 =\frac{1}{\eta''(t)}. 
\end{align*}
If $t_0 <\tau_{n-}$, then by mean value theorem, 
there exists $\tilde{s}_1 \in ((2\la-\ka)/n,(2\la+\nu\ka)/n)$, 
that is, $\tilde{\tau}_1:=(\eta')^{-1}(\tilde{s}_1) \in (\tau_{n-},\tau_+)$,  such that
\begin{align*}
  \log\(\frac{1+\tau_{+}}{1+\tau_{-}}\)
&\le \log\(\frac{1+\tau_{+}}{1+\tau_{n-}}\)
\\
&=\log \(1+(\eta')^{-1}\(\dfrac{2\la + \nu\ka}{n}\)\) 
    - \log \(1+(\eta')^{-1}\(\dfrac{2\la - \ka}{n}\)\)
\\
&=\(\frac{2\la + \nu\ka}{n} - \frac{2\la - \ka}{n}\)
  \frac{\log \(1+(\eta')^{-1}\(\dfrac{2\la + \nu\ka}{n}\)\) 
    - \log \(1+(\eta')^{-1}\(\dfrac{2\la - \ka}{n}\)\)}
  {\dfrac{2\la + \nu\ka}{n} - \dfrac{2\la - \ka}{n}}
\\
&=\frac{(1+\nu)\ka}{n}
  \left.\dfrac{d}{ds}\log \(1+(\eta')^{-1}(s)\)
  \right|_{s=\tilde{s}_1}
 =\frac{(1+\nu)\ka}{n}
  \left.\frac{\dfrac{d}{ds}(\eta')^{-1}(s)}{1+(\eta')^{-1}(s)}
  \right|_{s=\tilde{s}_1}
\\
&=\frac{(1+\nu)\ka}{n}
  \frac{1}{\(1+\tilde{\tau}_1\) 
    \eta''\(\tilde{\tau}_1\)}
\le \frac{2\ka}{n}\mu\(\tilde{\tau}_1\) 
\\
&\le \frac{2\ka}{n}\mu\(\tau_{+}\). 
\end{align*}
If $\tau_{n-} = t_0 < \tau_{n+}$, and thus $\eta'(t_0)>(2\la-\ka)/n$, 
then there exists 
$\tilde{s}_2 \in (\eta'(t_0),(2\la+\nu\ka)/n)$, that is, 
$\tilde{\tau}_2:=(\eta')^{-1}(\tilde{s}_2) \in (t_0,\tau_+)$, 
such that 
\begin{align*}
  \log\(\frac{1+\tau_{+}}{1+\tau_{-}}\)
&\le  \log\(\frac{1+\tau_{+}}{1+t_0}\)
\\
&=\log \(1+(\eta')^{-1}\(\dfrac{2\la + \nu\ka}{n}\)\) 
    - \log \(1+(\eta')^{-1}\(\eta'(t_0)\)\)
\\
&=\(\frac{2\la + \nu\ka}{n} - \eta'(t_0)\)
  \frac{\log \(1+(\eta')^{-1}\(\dfrac{2\la + \nu\ka}{n}\)\) 
    - \log \(1+(\eta')^{-1}\(\eta'(t_0)\)\)}
  {\dfrac{2\la + \nu\ka}{n} - \eta'(t_0)}
\\
&<\(\frac{2\la + \nu\ka}{n} - \frac{2\la - \ka}{n}\)
  \left.\dfrac{d}{ds}\log \(1+(\eta')^{-1}(s)\)
  \right|_{s=\tilde{s}_2}
 =\frac{(1+\nu)\ka}{n}
  \left.\frac{\dfrac{d}{ds}(\eta')^{-1}(s)}{1+(\eta')^{-1}(s)}
  \right|_{s=\tilde{s}_2}
\\
&=\frac{(1+\nu)\ka}{n}
  \frac{1}{\(1+\tilde{\tau}_2\) 
           \eta''\(\tilde{\tau}_2\)}
\le \frac{2\ka}{n}\mu\(\tilde{\tau}_2\)
\\
& \le \frac{2\ka}{n}\mu(\tau_+). 
\end{align*}
If $t_0>\tau_{n+}$, that is, 
$\tau_{n+}=\tau_{n-}=t_0$, then 
$\log((1+\tau_{n+})/(1+\tau_{n-})) = 0$. 
\end{proof}

\begin{lemma}\label{lemma_int-b-th}
Let $t_0 \ge 0$ and $h(t)\in C^1([t_0,\infty))$ satisfy 
\begin{equation}
  H_0:=\sup_{t \ge t_0}\{(1+t)|h'(t)|\}<\infty.
\end{equation}
Then, for any positive real numbers $\ka$ and $\la$, 
the following estimate holds: 
\begin{equation}\label{eq1-lemma_int-b-th}
  \left|\int^t_{t_0}
  \frac{\si_0(\eta(s)) \cos(2\la s + 2 h(s))}{1+s}\,ds
  \right|
  \le 
  \frac{(H_0+2)(2m+A_1 )}{\la(1+t_0)}
  +\frac{4A_1(H_0+2)}{\ka}
  +2\ka A_1  \mu(t). 
\end{equation}
In particular, if $\la \le N$ for a positive constant $N$, then 
there exists a positive constant $\tau_{N,\ka}$ such that 
\begin{equation}
  \left|\int^t_{t_0}
  \frac{\si_0(\eta(s)) \cos(2\la s + 2 h(s))}{1+s}\,ds
  \right|
  \le 
  \frac{(H_0+2)(2m+A_1 )}{\la(1+t_0)}
  +\frac{4A_1(H_0+2)}{\ka}
  +2\ka A_1  \mu(\tau_{N,\ka}). 
\end{equation}
\end{lemma}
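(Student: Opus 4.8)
The plan is to reduce the integral to the oscillatory integrals controlled by Lemmas \ref{Lemma_ossint}--\ref{Lemm-log(tau+/tau-)} by separating the resonant and non-resonant frequencies. First I would insert the Fourier expansion $\si_0(\eta(s)) = m + \sum_{n\ge 1}(\al_n\cos(n\eta(s)) + \be_n\sin(n\eta(s)))$ (using the normalisation that $\si$ is $2\pi$-periodic) and apply the product-to-sum identities to each term against $\cos(2\la s + 2h(s))$. Writing $\psi(s):=2h(s)$, so that $|\psi'(s)|\le 2H_0/(1+s)$ and hence $\Psi_0 = 2H_0$ in the notation of Lemma \ref{Lemma_ossint}, this rewrites the integrand as a sum of single oscillations with the two families of phases $n\eta(s)+2\la s$ and $n\eta(s)-2\la s$, together with the isolated term $m\cos(2\la s + 2h(s))$.

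Next I would dispose of the non-resonant contributions, i.e. the phases $n\eta(s)+2\la s = n\tilde{\phi}_n(s)$ with $\tilde{\phi}_n(s):=\eta(s)+2\la s/n$, and the isolated $m$-term. Here $\tilde{\phi}_n'(s)=\eta'(s)+2\la/n\ge 2\la/n>0$ and $\tilde{\phi}_n''=\eta''>0$, so Lemma \ref{Lemma_ossint} applies directly with $\phi_0=2\la/n$ and frequency index $n$, giving each mode a bound of size $(H_0+2)/(\la(1+t_0))$; the $m$-term is handled by the same integration by parts with phase $2\la s$. Summing against $\sum_n(|\al_n|+|\be_n|)=A_1<\infty$ — which is finite precisely because of the H\"older regularity (A1), cf. \eqref{FC} — produces the first term $(H_0+2)(2m+A_1)/(\la(1+t_0))$.

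The genuine difficulty is the resonant family, the phases $n\eta(s)-2\la s = n\phi_n(s)$ with $\phi_n$ as in Lemma \ref{Lemm-phi_n}, since $\phi_n'(s)=\eta'(s)-2\la/n$ vanishes at the stationary time $(\eta')^{-1}(2\la/n)$, where no integration by parts is available. To handle this I would split $[t_0,t]$ at the cut-off times $\tau_{n\pm}$ of Lemma \ref{Lemm-phi_n} (with the obvious conventions when $\tau_{n\pm}$ fall outside $[t_0,t]$). On the two outer pieces $[t_0,\tau_{n-}]$ and $[\tau_{n+},t]$ that lemma gives $|\phi_n'|\ge \ka/n$, so Lemma \ref{Lemma_ossint} applies with $\phi_0=\ka/n$ and, after bounding $(1+t_0)^{-1}\le 1$ and summing over $n$, contributes the second term $4A_1(H_0+2)/\ka$. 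On the resonant window $[\tau_{n-},\tau_{n+}]$ I would simply estimate the oscillatory factors by $1$ and invoke Lemma \ref{Lemm-log(tau+/tau-)}, namely $\int_{\tau_{n-}}^{\tau_{n+}}(1+s)^{-1}\,ds=\log\frac{1+\tau_{n+}}{1+\tau_{n-}}\le \frac{2\ka}{n}\mu(\tau_{n+})\le \frac{2\ka}{n}\mu(t)$; summing against $A_1$ yields the last term $2\ka A_1\mu(t)$, which is exactly where the geometry of $\eta$ enters through $\mu$. Adding the three contributions gives \eqref{eq1-lemma_int-b-th}.

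Finally, for the ``in particular'' statement I would note that the resonant windows close up uniformly in $n$ once $\la\le N$: since $n\ge 1$ one has $\tau_{n+}=(\eta')^{-1}((2\la+\ka)/n)\le (\eta')^{-1}(2N+\ka)$, so taking $\tau_{N,\ka}:=\max\{t_0,(\eta')^{-1}(2N+\ka)\}$ and using the monotonicity of $\mu$ replaces every $\mu(\tau_{n+})$ by $\mu(\tau_{N,\ka})$, turning $\mu(t)$ into $\mu(\tau_{N,\ka})$. The main obstacle throughout is the resonant window: controlling it requires both the derivative lower bound away from resonance (Lemma \ref{Lemm-phi_n}) and the logarithmic length estimate tied to $\mu$ (Lemma \ref{Lemm-log(tau+/tau-)}), while keeping the two free parameters $\ka$ (window width) and $\la$ (base frequency) separate so that they can be optimised later in the energy estimate.
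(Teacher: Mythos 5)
Your proposal is correct and follows essentially the same route as the paper: the Fourier expansion of $\si_0$, the product-to-sum splitting into the non-resonant phases $n\eta(s)+2\la s$ (handled by Lemma \ref{Lemma_ossint} with $\phi_0=2\la/n$, together with the $m$-term), and the resonant phases $n\eta(s)-2\la s$ split at $\tau_{n\pm}$ via Lemmas \ref{Lemm-phi_n} and \ref{Lemm-log(tau+/tau-)}, with the same choice $\tau_{N,\ka}=\max\{t_0,(\eta')^{-1}(2N+\ka)\}$ for the uniform variant. The bookkeeping of the three terms matches the paper's proof.
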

\begin{proof}
Let $n \in \N$. 
We define $g_{j,n}(s)$ for $j=1,\ldots,4$ by 
\begin{align*}
& g_{1,n}(s):=\sin(n\eta(s)+2\la s) \cos(2 h(s)),
\\
&  g_{2,n}(s):=\sin(n\eta(s)-2\la s) \cos(2 h(s)),
\\
& g_{3,n}(s):=\cos(n\eta(s)+2\la s) \sin(2 h(s)),
\\
&  g_{4,n}(s):=-\cos(n\eta(s)-2\la s) \sin(2 h(s)),
\end{align*}
so that 
\begin{align*}
  2\sin(n\eta(s))\cos(2\la s + 2 h(s))
 =g_{1,n}(s)+g_{2,n}(s)+g_{3,n}(s)+g_{4,n}(s). 
\end{align*}
By applying Lemma \ref{Lemma_ossint} with 
$\tau_-=t_0$, $\tau_+=t$,
\[
  \phi(t)=\eta(t)+\frac{2\la t}{n},\;\;
  \psi(t)=2h(t),\;\;
  \phi_0 = \frac{2 \la}{n},\;\;
  \Psi_0 = 2 H_0, 
\]
we have 
\begin{align*}
  \left|\int^t_{t_0}\frac{g_{j,n}(s)}{1+s}\,ds\right|
 \le \frac{H_0+2}{\la(1+t_0)} 
\end{align*}
for $j=1,3$. 
Let $\tau_-$ and $\tau_+$ satisfy $t_0 \le \tau_- < \tau_+$. 
If either 
$\tau_+ < \tau_{n-}$ or $\tau_{n+} < \tau_-$ holds, 
then by applying Lemma \ref{Lemma_ossint} and Lemma \ref{Lemm-phi_n} with 
\[
  \phi(t)=\phi_n(t),\;\;
  \psi(t)=2h(t),\;\;
  \phi_0 = \frac{\ka}{n},\;\;
  \Psi_0 = 2H_0,  
\]
we have 
\begin{align*}
  \left|\int^{\tau_+}_{\tau_-}\frac{g_{2,n}(s)}{1+s}\,ds\right|
 =\left|\int^{\tau_+}_{\tau_-}
  \frac{\sin(n\phi_n(s)) \cos(2 h(s))}{1+s}\,ds \right|
 \le
  \frac{2H_0 + 4}{\ka(1+t_0)}
 \le
  \frac{2(H_0 + 2)}{\ka}. 
\end{align*}
If $[\tau_-,\tau_+] \subseteq [\tau_{n-},\tau_{n+}]$, 
then by Lemma \ref{Lemm-log(tau+/tau-)}, we have 
\begin{align*}
  \left|\int^{\tau_+}_{\tau_-}\frac{g_{2,n}(s)}{1+s}\,ds\right|
\le \int^{\tau_+}_{\tau_-} \frac{ds}{1+s}
= \log\(\frac{1+\tau_{+}}{1+\tau_{-}}\) 
  \le \frac{2\ka}{n}\mu(\tau_+)
  \le 2\ka \mu(\tau_+). 
\end{align*}
Since the same estimates hold for $g_{4,n}(s)$, we have 
\begin{align*}
  \left|\int^t_{t_0}\frac{g_{j,n}(s)}{1+s}\,ds\right|
  \le
  \begin{dcases}
    \frac{2(H_0 + 2)}{\ka} 
      & (t \le \tau_{n-}), \\
    2\(\ka\mu(t)
     + \frac{H_0 + 2}{\ka}\) 
      & (\tau_{n-} < t \le \tau_{n+}), \\
    2\( \ka\mu(t) + \frac{2(H_0 + 2)}{\ka}\) 
      & (\tau_{n+} < t)
  \end{dcases}
\end{align*}
for $j=2, 4$. 
Summarizing the estimates above, we have 
\begin{equation}
  \left|\int^t_{t_0}
  \frac{\sin(n\eta(s)) \cos(2\la s + 2 h(s))}{1+s}\,ds \right|
  \le 
  \frac{H_0+2}{\la(1+t_0)}
   +2\(\ka \mu(t) + \frac{2(H_0+2)}{\ka}\). 
\end{equation}
Analogously, we have 
\begin{equation}
  \left|\int^t_{t_0}
  \frac{\cos(n\eta(s)) \cos(2\la s + 2 h(s))}{1+s}\,ds \right|
  \le 
  \frac{H_0+2}{\la(1+t_0)}
   +2\(\ka \mu(t) + \frac{2(H_0+2)}{\ka}\)
\end{equation}
for any $n\in\N$. 
If $n=0$, then by applying Lemma \ref{Lemma_ossint} with 
$n=1$, 
$\tau_-=t_0$, $\tau_+=t$, 
\[
  \phi(t)=2\la t,\;\;
  \psi(t)=2h(t),\;\;
  \phi_0 = 2 \la
  \;\;\text{ \textit{and} }\;\;
  \Psi_0 = 2 H_0, 
\]
we have 
\begin{align*}
  \left|\int^t_{t_0}\frac{\cos(2\la s + 2 h(s))}{1+s}\,ds\right|
  &\le
    \left|\int^t_{t_0}\frac{\cos(2\la s)\cos(2 h(s))}{1+s}\,ds\right|
   +\left|\int^t_{t_0}\frac{\sin(2\la s)\sin(2 h(s))}{1+s}\,ds\right|,
  \\
  &\le \frac{2(H_0+2)}{\la(1+t_0)}. 
\end{align*}
Consequently, we have 
\begin{align*}
&  \left|\int^t_{t_0}
  \frac{\si_0(\eta(s)) \cos(2\la s + 2 h(s))}{1+s}\,ds
  \right|
\\
  &\le 
  \frac{2m(H_0+2)}{\la(1+t_0)}
 +\(
    \frac{H_0+2}{\la(1+t_0)}
   +2\(\ka \mu(t) + \frac{2(H_0+2)}{\ka}\)
  \)
  \sum_{n=1}^\infty \(|\al_n|+|\be_n|\)
\\
 &\le 
  \frac{(H_0+2)(2m+A_1 )}{\la(1+t_0)}
  +\frac{4A_1(H_0+2)}{\ka}
  +2\ka A_1  \mu(t). 
\end{align*}
In particular, if $\la \le N$, then setting $\tau_{N,\ka}$ such as
\begin{align*}
  \tau_{n+}
 &\le \max\left\{t_0, (\eta')^{-1}\(\frac{2N+\ka}{n}\)\right\}
  \le \max\left\{t_0, (\eta')^{-1}\(2N+\ka\)\right\}=:\tau_{N,\ka}, 
\end{align*}
we have 
\begin{align*}
  \left|\int^t_{t_0}\frac{g_{j,n}(s)}{1+s}\,ds\right|
  \le
    2\( \ka\mu(\tau_{N,\ka}) + \frac{2(H_0 + 2)}{\ka}\) 
\end{align*}
for $j=2, 4$. 
Therefore, $\mu(t)$ in the estimate \eqref{eq1-lemma_int-b-th} can be replaced with $\mu(\tau_{N,\ka})$. 
\end{proof}

\medskip
\noindent
\textit{Proof of Proposition \ref{Prop1} (i)}.\;
Let $(t,\xi) \in Z_{H}$. 
We define $h(t,\xi)$ by 
\[
  h(t,\xi) = - |\xi| t + \th(t,\xi). 
\]
Then we see that 
\begin{align*}
  |\pa_t h(t,\xi)| 
 = \left|-|\xi| + \pa_t \th(t,\xi)\right|
 = \left|b(t)\sin(\th(t,\xi))\cos(\th(t,\xi))\right|
 \le \frac{b_1}{1+t} 
\end{align*}
by \eqref{ode_rho-th}, 
where $b_1=m + \max_{\eta}\{|\si(\eta)|\}$. 
By applying Lemma \ref{lemma_int-b-th} with $h(t)=h(t,\xi)$, $H_0=b_1$, $\la=|\xi|$ and $t_0=\cT_0$, we have 
\begin{align*}
\left|\int^t_{\cT_0}
  \frac{\si(\eta(s)) \cos(2\th(s,\xi))}{1+s}\,ds
  \right|
&=\left|\int^t_{\cT_0}
  \frac{\si(\eta(s)) \cos(2|\xi| s + 2 h(s,\xi))}{1+s}\,ds
  \right|
\\
&\le 
  \frac{(b_1+2)(2m + A_1 )}{|\xi|(1+\cT_0)}
 +\frac{4A_1(b_1+2)}{\ka} 
 +2\ka A_1 \mu(t)
\\
&\le 
  \frac{(b_1+2)(2m + A_1 )}{N}
 +\frac{4A_1(b_1+2)}{\ka} 
 +2\ka A_1 \mu(t).
\end{align*}
Therefore, recalling \eqref{B0}, we have 
\begin{equation}\label{-2intbsin^2}
\begin{split}
&  -2\int^t_{\cT_0}b(s)\sin^2\(\th(s,\xi)\)\,ds
\\
&=\int^t_{\cT_0}\frac{\si_0(\eta(s))}{1+s}
  \(-1+\cos\(2\th(s,\xi)\)\)\,ds
\\
&=-\int^t_{\cT_0}\frac{m}{1+s}\,ds
  -\int^t_{\cT_0}\frac{\si(\eta(s))}{1+s}\,ds
  +\int^t_{\cT_0}\frac{\si_0(\eta(s))\cos\(2\th(s,\xi)\)}{1+s}\,ds
\\
&\le 
   \log\(\frac{1+t}{1+\cT_0}\)^{-m}
  +B_0
 +\frac{(b_1+2)(2m + A_1 )}{N}
 +\frac{4A_1(b_1+2)}{\ka} 
 +2\ka A_1 \mu(t).
\end{split}
\end{equation}
Here we note that $\cT_0=0$ in $Z_{H2}$. 
In particular, if $(t,\xi) \in Z_{H1}$, that is, 
$|\xi|\le N$, then we have 
\begin{equation}\label{-2intbsin^2-ZH1}
\begin{split}
& -2\int^t_{\cT_0}b(s)\sin^2\(\th(s,\xi)\)\,ds
\\
&\quad
  \le 
   \log\(\frac{1+t}{1+\cT_0}\)^{-m}
  +B_0
 +\frac{(b_1+2)(2m + A_1 )}{N}
 +\frac{4A_1(b_1+2)}{\ka} 
 +2\ka A_1 \mu(\tau_{N,\ka}).
\end{split}
\end{equation}
Consequently, setting 
\[
  K_H
 =
\exp\(B_0
 +\frac{(b_1+2)(2m + A_1 )}{N}
 +\frac{4A_1(b_1+2)}{\ka}
 +2\ka A_1 \mu(\tau_{N,\ka})
  \)
\]
and 
\[
  \ka = \frac{\ve}{2A_1 }, 
\]
we have \eqref{est_cE-ZH}.
\qed

%
\subsection{Proof of Proposition \ref{Prop1} (ii)}
%

The following proof of Proposition \ref{Prop1} (ii) below is based on \cite{GG25}. 
In the proof, we don't use the non-effective condition \eqref{0<m<1}, 
but essentially use the non-negative dissipative condition \eqref{b>=0}. 


%

If $|\xi|=0$, then from $\pa_t^2 v + b(t)\pa_t v = 0$ the following equalities hold: 
\begin{align*}
  \pa_t v(t,0)
 &=v_1(0) \exp\(-\int^t_0 b(s)\,ds\)
\\
 &=v_1(0) \exp\(-\int^t_0 \frac{\si(\eta(s))}{1+s}\,ds\) (1+t)^{-m}. 
\end{align*}
Therefore, we have 
\begin{align*}
  \cE(t,0)
&=|\pa_t v(t,0)|^2
 =|v_1(0)|^2 \exp\(-2\int^t_0 \frac{\si(\eta(s))}{1+s}\,ds\) (1+t)^{-2m} 
\\
& \le e^{2B_0}(1+t)^{-2m} |v_1(0)|^2. 
\end{align*}
Thus \eqref{est_cE-ZD1} holds for $K_D \ge e^{2B_0}$. 

For $|\xi|>0$, we introduce the following lemma. 
\begin{lemma}\label{lemma_Prop1(ii)}
Let $|\xi|>0$, 
$\rho_j(t,\xi)$ and $\th_j(t,\xi)$ for $j=1,2$ be solutions of 
\eqref{ode_rho-th} with initial data
\begin{equation}\label{ic_rho1-th1}
  \rho_1(0,\xi) = |\xi||v_0(\xi)|, \quad
  \th_1(0,\xi) = 
    \begin{cases} 
      0 & (v_0(\xi) \ge 0), \\ \pi & (v_0(\xi)<0), 
    \end{cases}
\end{equation}
\begin{equation}\label{ic_rho2-th2}
  \rho_2(0,\xi) = |v_1(\xi)|, \quad
  \th_2(0,\xi) = 
    \begin{cases} 
      \pi/2 & (v_1(\xi) \ge 0), \\ -\pi/2 & (v_1(\xi)<0). 
    \end{cases}
\end{equation}
If (A1), (A2) with \eqref{b>=0} and (A3) are valid, 
then the following estimates hold: 
\begin{equation}\label{est_rhoj}
  \rho_j(t,\xi)^2 \le 
  \begin{cases}
    |\xi|^2 |v_0(\xi)|^2 & \text{ \textit{for} }\;\; j=1,\\ 
    e^{2B_0}\((1+t)^{-2m}+|\xi|^2\ga(t;m)^2\)|v_1(\xi)|^2 
      & \text{ \textit{for} }\;\; j=2. 
  \end{cases}
\end{equation}
\end{lemma}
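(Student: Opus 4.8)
The plan is to treat the two data components separately, exactly as the splitting into $(\rho_1,\th_1)$ and $(\rho_2,\th_2)$ prepares, and to note that $j=1$ is immediate while $j=2$ carries all of the work. For $j=1$, since $\rho_1,\th_1$ solve \eqref{ode_rho-th}, the representation \eqref{cE-th} from Lemma \ref{lemma_rho-th} gives
\[
  \rho_1(t,\xi)^2 = \rho_1(0,\xi)^2 \exp\(-2\int^t_0 b(s)\sin^2(\th_1(s,\xi))\,ds\).
\]
Under \eqref{b>=0} we have $b(s)=\si_0(\eta(s))/(1+s)\ge 0$, so the exponent is non-positive and hence $\rho_1(t,\xi)^2\le\rho_1(0,\xi)^2=|\xi|^2|v_0(\xi)|^2$; no control of $\th_1$ is needed.

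For $j=2$ the same representation only gives the crude bound $\rho_2(t,\xi)^2\le|v_1(\xi)|^2$, which misses the decay, so I would instead work directly with \eqref{ve} for the solution $v$ with data $v(0,\xi)=0$, $\pa_t v(0,\xi)=v_1(\xi)$ (assuming $v_1(\xi)\ne0$, else $v\equiv0$). Introducing the integrating factor $\Phi(t):=\exp\(\int^t_0 b(s)\,ds\)$, which by \eqref{B0} satisfies $e^{-B_0}(1+t)^m\le\Phi(t)\le e^{B_0}(1+t)^m$, I would rewrite the equation as $\pa_t\(\Phi\,\pa_t v\)=-|\xi|^2\Phi\,v$. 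Setting $G(t):=\int^t_0\Phi(s)^{-1}\,ds\le e^{B_0}\ga(t;m)$, integration in $t$ turns this into the coupled Volterra system
\[
  \pa_t v(t,\xi)=\Phi(t)^{-1}\(v_1 - |\xi|^2\int^t_0\Phi(s)\,v(s,\xi)\,ds\),
  \qquad
  v(t,\xi)=\int^t_0\pa_t v(s,\xi)\,ds,
\]
in which the coupling carries the factor $|\xi|^2$ that is small on $Z_D$, where $(1+t)|\xi|\le N$.

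The target is precisely the energy of the non-oscillatory weak-dissipation solution: the summands $(1+t)^{-2m}|v_1|^2$ and $|\xi|^2\ga(t;m)^2|v_1|^2$ are $\Phi_0(t)^{-2}|v_1|^2$ and $|\xi|^2\(\int^t_0\Phi_0^{-1}\)^2|v_1|^2$ with $\Phi_0(t)=(1+t)^m$, the oscillation entering only through the factor $e^{2B_0}$ relating $\Phi$ to $\Phi_0$. Accordingly I would first establish the displacement bound $|v(t,\xi)|\le e^{B_0}\ga(t;m)|v_1|$, which accounts for the second summand, and then feed it back into the formula for $\pa_t v$ to control the velocity, assembling $\rho_2^2=|\pa_t v|^2+|\xi|^2|v|^2$ and converting the $\Phi$-weights into explicit powers of $(1+t)$ through the two-sided bound on $\Phi$.

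The main obstacle is that the two summands cannot be matched to the velocity and the displacement \emph{separately}: on $Z_D$ the energy is exchanged between the two components, and $|\pa_t v|$ need not satisfy $|\pa_t v|\le e^{B_0}(1+t)^{-m}|v_1|$ at all (for $b\equiv m/(1+t)$ with $m=2$ one finds $v=(v_1/|\xi|)\sin(|\xi|t)/(1+t)$, whose velocity is of size $|v_1|/(|\xi|(1+t)^2)$ near $|\xi|t=\pi/2$). Hence the estimate must be genuinely combined, and the delicate point is to retain the sharp constant $e^{2B_0}$: a crude Gronwall argument replacing the Volterra kernel by its modulus both loses the constant and, for $m\ge1$, diverges since $|\xi|^2\int^t_0\Phi(s)G(s)\,ds$ fails to stay bounded on $Z_D$. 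The resolution I would pursue is to keep the oscillatory cancellation in the kernel—integrating by parts in the coupling integral and exploiting the monotonicity of $G(t)-G(s)$ in $s$—so that the parabolic constraint $|\xi|(1+t)\le N$ bounds genuinely oscillatory integrals rather than their moduli, mirroring the non-oscillatory analysis of \cite{GG25} with the oscillation absorbed into $B_0$.
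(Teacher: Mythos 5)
Your treatment of $j=1$ coincides with the paper's: \eqref{cE-th} together with $b\ge 0$ gives $\rho_1(t,\xi)^2\le\rho_1(0,\xi)^2$. For $j=2$, however, what you have written is a correct diagnosis of the difficulty followed by an unexecuted plan, and the plan as described would not deliver the lemma. You rightly observe that the two summands in \eqref{est_rhoj} cannot be matched to $|\pa_t v|^2$ and $|\xi|^2|v|^2$ separately for all $t$, and that a crude Gronwall/Neumann iteration on the Volterra system diverges when $m\ge 1$; but the resolution you propose --- integration by parts to exploit oscillatory cancellation in the kernel --- is only announced, not carried out, and it leans on the constraint $(1+t)|\xi|\le N$, which is not among the hypotheses: the lemma is a statement for all $t\ge 0$ and all $|\xi|>0$ (it is only \emph{applied} in $Z_D$), and the paper's proof never uses $Z_D$. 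So the hard half of the lemma remains unproved in your proposal.

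The idea you are missing is that the componentwise matching \emph{does} hold up to the first zero $t_1$ of $\pa_t v=\rho_2\sin\th_2$, and after $t_1$ one no longer needs it. Staying in the polar coordinates (which you abandon for $j=2$), take $v_1>0$, so that $\th_2(0)=\pi/2$ and $\sin\th_2\ge 0$ on $[0,t_1]$. From $\pa_t\(\rho_2\cos\th_2\)=|\xi|\rho_2\sin\th_2$ one gets $\cos\th_2\ge 0$ there, and then the auxiliary function $r(t)=\sin(\th_2(t))\exp\(\int_0^t b(s)\cos^2(\th_2(s))\,ds\)$ satisfies $\pa_t r=-|\xi|\cos(\th_2)\exp(\cdots)\le 0$, hence $r\le r(0)=1$; combined with \eqref{cE-th} this yields the velocity bound $0\le\rho_2\sin\th_2\le v_1\exp\(-\int_0^t b(s)\,ds\)\le e^{B_0}v_1(1+t)^{-m}$ on $[0,t_1]$, and integrating it through $\pa_t\(\rho_2\cos\th_2\)=|\xi|\rho_2\sin\th_2$ gives $0\le\rho_2\cos\th_2\le e^{B_0}v_1|\xi|\ga(t;m)$, whence \eqref{est_rhoj} on $[0,t_1]$. (Your $m=2$ example is consistent with this: the velocity exceeds $(1+t)^{-m}v_1$ only after its first sign change.) For $t>t_1$ the total energy $\rho_2^2$ is non-increasing by \eqref{cE-th} and $b\ge0$, and at $t_1$ it equals $\rho_2(t_1)^2\cos^2(\th_2(t_1))\le e^{2B_0}v_1^2|\xi|^2\ga(t_1;m)^2\le e^{2B_0}v_1^2|\xi|^2\ga(t;m)^2$, so the bound propagates with the constant $e^{2B_0}$ and without any smallness of $|\xi|t$. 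I would encourage you to replace the Volterra sketch with this phase-plane argument.
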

\begin{proof}
Let us denote $\rho_j(t,\xi)=\rho_j(t)$, $\th_j(t,\xi)=\th_j(t)$, $v_0(\xi)=v_0$ and $v_1(\xi)=v_1$. 
We note that 
\begin{equation}
  \begin{pmatrix}
    \rho_1(0)\cos(\th_1(0)) & \rho_2(0)\cos(\th_2(0))\\
    \rho_1(0)\sin(\th_1(0)) & \rho_2(0)\sin(\th_2(0))
  \end{pmatrix}
 =\begin{pmatrix} |\xi|v_0 & 0 \\ 0 & v_1\end{pmatrix}.
\end{equation}
By \eqref{b>=0} and Lemma \ref{lemma_rho-th} with $t_0=0$, we have 
\begin{equation}\label{rhot-rho0}
  \rho_j(t)^2
 =\rho_j(0)^2 \exp\(-2\int^t_{0} b(s)\sin^2(\th_j(s))\,ds\) 
\le \rho_j(0)^2,  
\end{equation}
hence \eqref{est_rhoj} holds for $j=1$ since 
$\rho_1(0) = |\xi|^2 |v_0|^2$. 
Let us consider the case for $j=2$. 
We suppose that $v_1>0$; the other case can also be proved by the same way. 
Then, there exists $t_1>0$ such that 
\begin{equation}
  \sin(\th_2(t)) \ge 0
\end{equation}
on $[0,t_1]$. 
Noting the equality 
\begin{equation}\label{|xi|v-pa_tv}
  |\xi| \rho_j(t)  \sin(\th_j(t)) = \pa_t\(\rho_j(t) \cos(\th_j(t))\)
  \;\; (j=1,2), 
\end{equation}
which follows from \eqref{rho_t-rho} and \eqref{th_t-|xi|}, we have 
\begin{align*}
  0
&\le \int^t_0 |\xi|\rho_2(s)\sin(\th_2(s))\,ds
 =\rho_2(t)\cos(\th_2(t))-\rho_2(0)\cos(\th_2(0))
\\
&=\rho_2(t)\cos(\th_2(t)),
\end{align*}
it follows that 
\begin{equation}\label{cos>0}
  \cos(\th_2(t)) \ge 0 
\end{equation}
on $[0,t_1]$. 
Denoting 
\begin{align*}
  r(t):=
  \sin(\th_2(t)) \exp\(\int^t_0 b(s) \cos^2(\th_2(s))\,ds\),  
\end{align*}
by \eqref{th_t-|xi|} we have 
\begin{align*}
  \pa_t r(t)
&=\cos(\th_2(t))\exp\(\int^t_0 b(s) \cos^2(\th_2(s))\,ds\)
  \(\pa_t \th_2(t) + b(t) \cos(\th_2(t)) \sin(\th_2(t))\)
\\
&=-|\xi|\cos(\th_2(t))\exp\(\int^t_0 b(s) \cos^2(\th_2(s))\,ds\)
\\
&\le 0. 
\end{align*}
It follows that $r(t) \le 1$ on $[0,t_1]$ since $r(0)=1$. 
Therefore, by \eqref{rhot-rho0}, we have 
\begin{align*}
0
&\le \rho_2(t)\sin(\th_2(t))
\\
&=\rho_2(0) \exp\(-\int^t_0 b(s) \sin^2(\th_2(s))\,ds\)
  \sin(\th_2(t)) 
\\
&=\rho_2(0) 
  \exp\(-\int^t_0 b(s)\,ds\)
  \sin(\th_2(t)) 
  \exp\(\int^t_0 b(s) \cos^2(\th_2(s))\,ds\)
\\
&=v_1 \exp\(-\int^t_0 b(s)\,ds\) r(t)
\\
&\le v_1 \exp\(-\int^t_0 b(s)\,ds\)
\\
&\le e^{B_0} v_1 (1+t)^{-m}. 
\end{align*}
Moreover, we have 
\begin{equation}\label{rho2cos<=}
\begin{split}
  0
&\le 
  \rho_2(t) \cos(\th_2(t))
 =\int^t_0 \pa_s\(\rho_2(s) \cos(\th_2(s))\)\,ds
\\
&= \int^t_0 |\xi|\rho_2(s) \sin(\th_2(s)) \,ds
\\
&\le e^{B_0} v_1 |\xi| \int^t_0 (1+s)^{-m} \,ds
\\
&= e^{B_0} v_1 |\xi|\ga(t;m). 
\end{split}
\end{equation}
Therefore, we have 
\begin{align*}
  \rho_2(t)^2
&=\rho_2(t)^2\(\cos^2(\th_2(t)) + \sin^2(\th_2(t))\)
\\
&\le e^{2B_0}|v_1|^2\((1+t)^{-2m} + |\xi|^2  \ga(t;m)^2\)
\end{align*}
on $[0,t_1]$. 
Suppose that $\sin(\th_2(t_1))=0$. 
Then, by \eqref{cE-th} with $t_0=t_1$ 
and \eqref{rho2cos<=} with $t=t_1$, 
for any $t > t_1$ we have 
\begin{align*}
  \rho_2(t)^2
&\le \rho_2(t_1)^2 
  = \rho_2(t_1)^2 \cos^2(\th_2(t_1))
\\
& \le e^{2B_0} |v_1|^2 |\xi|^2 \ga(t_1;m)^2
\\
& \le e^{2B_0} |v_1|^2 |\xi|^2 \ga(t;m)^2. 
\end{align*}
Consequently, we have 
\[
  \rho_2(t)^2 \le e^{2B_0}|v_1|^2\((1+t)^{-2m} + |\xi|^2  \ga(t;m)^2\)
\]
for any $t \ge 0$. 
\end{proof}

\noindent
\textit{Proof of Proposition \ref{Prop1} (ii)}.\;
We note that the solution to \eqref{v} is represented by 
\[
  v(t,\xi) 
 =|\xi|^{-1} 
  \(\rho_1(t,\xi)\cos(\th_1(t,\xi))+\rho_2(t,\xi)\cos(\th_2(t,\xi))\), 
\]
and thus 
\begin{align*}
  \pa_t v(t,\xi)
&=\rho_1(t,\xi)\sin(\th_1(t,\xi))+\rho_2(t,\xi)\sin(\th_2(t,\xi))
\end{align*}
by \eqref{|xi|v-pa_tv}. 
Therefore, by Lemma \ref{lemma_Prop1(ii)}, we have 
\begin{align*}
  \cE(t,\xi)
&\le 2\(\rho_1(t,\xi)^2 + \rho_2(t,\xi)^2\)
\\
&\le 2\(|\xi|^2 |v_0(\xi)|^2
  +e^{2B_0}\((1+t)^{-2m}+|\xi|^2\ga(t;m)^2\)|v_1(\xi)|^2\); 
\end{align*}
thus \eqref{est_cE-ZD1} holds for $K_D \ge 2e^{2B_0}$. 
\qed

%
\subsection{Proof of Proposition \ref{Prop1} (iii)}
%

The following proof of Proposition \ref{Prop1} (iii) below is based on \cite{GH25}. 
In the following proof, we don't use the non-negative condition \eqref{b>=0}, 
but essentially use the non-effective dissipative condition \eqref{0<m<1}. 

Let $v=v(t,\xi)$ be the solution to \eqref{ve}, and consider the following $2\times 2$ matrix $W(t,\xi)=(w_{jk}(t,\xi))$ which defines 
\begin{equation*}\label{vjk}
  \begin{pmatrix}
    |\xi| v(t,\xi) \\ \pa_t v(t,\xi) \\ 
  \end{pmatrix}
 =W(t,\xi)
  \begin{pmatrix}
    |\xi|v_0(\xi) \\ v_1(\xi)
  \end{pmatrix}. 
\end{equation*}
Since the following estimate holds: 
\begin{equation}\label{est-cE_(iii)}
\begin{split}
  \cE(t,\xi;v)
&=\left|w_{11}(t,\xi) |\xi| v_0(\xi) + w_{12}(t,\xi) v_1(\xi)\right|^2
 +\left|w_{21}(t,\xi) |\xi| v_0(\xi) + w_{22}(t,\xi) v_1(\xi)\right|^2
\\
&\le 
  2\(|w_{11}(t,\xi)|^2 + |w_{21}(t,\xi)|^2\)|\xi|^2|v_0(\xi)|^2
\\
&\quad
 +2\(|w_{12}(t,\xi)|^2 + |w_{22}(t,\xi)|^2\) |v_1(\xi)|^2,
\end{split}
\end{equation}
our goal is to derive 
\begin{equation}\label{est-|wjk|}
  \sup_{(t,\xi)\in Z_D}
  \left\{|w_{j1}(t,\xi)| + (1+t)^m |w_{j2}(t,\xi)|\right\}<\infty 
\end{equation}
for $j=1,2$. 

Let us review the following properties of $w_{jk}$: 
\begin{lemma}\label{lamma_vjk}
$w_{1k}(t,\xi)$ and $w_{2k}(t,\xi)$ $(k=1,2)$ are given by the following integral equations: 
\begin{equation}\label{iq_w1k}
   w_{1k}(t,\xi)=
    \de_{1k} + |\xi| \int^t_0  w_{2k}(\tau,\xi)\,d\tau
\end{equation}
and
\begin{equation}\label{iq_w2k}
   w_{2k}(t,\xi)=
    \beta(t)\(\de_{2k} 
  - |\xi| \int^t_0 \beta(\tau)^{-1}  w_{1k}(\tau,\xi)\,d\tau\),
\end{equation}
where 
\[
  \beta(t):=\exp\(-\int^t_0 b(s)\,ds\), 
\]
$\de_{jk}=1$ if $j=k$ and $\de_{jk}=0$ if $j \neq k$. 
\end{lemma}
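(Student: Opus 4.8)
The plan is to recast the scalar equation \eqref{ve} as a first-order linear system for the vector with components $|\xi| v$ and $\pa_t v$, and then to read off the integral equations for the entries of the fundamental matrix $W$ directly from that system.

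First I would differentiate the two components. Since $\pa_t(|\xi| v)=|\xi|\pa_t v$ and, by \eqref{ve}, $\pa_t(\pa_t v)=\pa_t^2 v=-|\xi|\,(|\xi| v)-b(t)\pa_t v$, the vector solves
\[
  \pa_t\begin{pmatrix}|\xi| v\\ \pa_t v\end{pmatrix}
  =\begin{pmatrix}0 & |\xi|\\ -|\xi| & -b(t)\end{pmatrix}
   \begin{pmatrix}|\xi| v\\ \pa_t v\end{pmatrix}.
\]
By the definition of $W(t,\xi)$ together with uniqueness for this linear system, $W$ is its fundamental matrix normalized by $W(0,\xi)=I$; equivalently the entries satisfy
\[
  \pa_t w_{1k}=|\xi| w_{2k},\qquad
  \pa_t w_{2k}=-|\xi| w_{1k}-b(t) w_{2k},\qquad
  w_{jk}(0,\xi)=\de_{jk}.
\]

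Integrating the first identity on $[0,t]$ gives \eqref{iq_w1k} immediately. For the second I would view $\pa_t w_{2k}+b(t) w_{2k}=-|\xi| w_{1k}$ as a linear first-order ODE and multiply through by the integrating factor $\beta(t)^{-1}=\exp(\int_0^t b(s)\,ds)$. Since $\pa_t\beta(t)^{-1}=b(t)\beta(t)^{-1}$, the left-hand side collapses to $\pa_t(\beta(t)^{-1}w_{2k})$, so that $\pa_t(\beta(t)^{-1}w_{2k})=-|\xi|\beta(t)^{-1}w_{1k}$; integrating on $[0,t]$ and using $\beta(0)=1$ and $w_{2k}(0,\xi)=\de_{2k}$ yields $\beta(t)^{-1}w_{2k}(t,\xi)=\de_{2k}-|\xi|\int_0^t\beta(\tau)^{-1}w_{1k}(\tau,\xi)\,d\tau$, and multiplying back by $\beta(t)$ produces \eqref{iq_w2k}.

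The argument involves essentially no obstacle beyond bookkeeping. The only two points that require care are the sign in the off-diagonal coupling, which comes from rewriting $|\xi|^2 v$ as $|\xi|\cdot(|\xi| v)$, and the fact that the integrating factor is $\beta(t)^{-1}$ rather than $\beta(t)$ — this is exactly where the minus sign in the definition $\beta(t)=\exp(-\int_0^t b(s)\,ds)$ enters. Uniqueness for the initial value problem is what guarantees that the pair of integral equations \eqref{iq_w1k}, \eqref{iq_w2k} and the differential system determine the same functions $w_{jk}$.
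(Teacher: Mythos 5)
Your proposal is correct and follows essentially the same route as the paper: both derive the first-order system $\pa_t w_{1k}=|\xi|w_{2k}$, $\pa_t w_{2k}+b(t)w_{2k}=-|\xi|w_{1k}$ with $w_{jk}(0,\xi)=\de_{jk}$ (the paper does this by equating coefficients of $v_0$ and $v_1$, you by viewing $W$ as the fundamental matrix, which is the same thing), then integrate the first equation directly and the second via the integrating factor $\beta(t)^{-1}$.
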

\begin{proof}
The proof is following straightforward calculations. 
We note that $w_{jk}(0,\xi)=\de_{jk}$. 
Since 
$|\xi|v = w_{11}|\xi|v_0+w_{12}v_1$ and 
$\pa_t v = w_{21}|\xi|v_0+w_{22}v_1$, we have 
\begin{equation}
  |\xi|\pa_t v
 =\begin{cases}
    \pa_t w_{11}|\xi|v_0 + \pa_t w_{12}v_1, \\
    w_{21}|\xi|^2 v_0 + |\xi| w_{22}v_1,
  \end{cases}
\end{equation}
that is, 
\begin{equation}
  \(\pa_t w_{11} -w_{21}|\xi|\)|\xi|v_0
 +\(\pa_t w_{12} -|\xi| w_{22}\)v_1 = 0. 
\end{equation}
It follows that \eqref{iq_w1k} is valid. 
Moreover, since $v$ is a solution to \eqref{ve}, we have 
\begin{align*}
 0&= \pa_t^2 v + |\xi|^2 v + b(t)\pa_t v
\\
  &=\pa_t\(w_{21}|\xi| v_0 + w_{22} v_1\)
   +|\xi|\(w_{11}|\xi| v_0 + w_{12} v_1\)
   +b(t) \(w_{21}|\xi| v_0 + w_{22} |\xi|v_1\)
\\
  &=\(\pa_t w_{21}+b(t)w_{21}+|\xi|w_{11}\)|\xi|v_0
   +\(\pa_t w_{22}+b(t)w_{22}+|\xi|w_{12}\)v_1, 
\end{align*}
it follows that \eqref{iq_w2k} is valid. 
\end{proof}

By substituting \eqref{iq_w2k} into \eqref{iq_w1k}, we have 
\begin{equation}\label{v2k}
\begin{split}
   w_{1k}(t,\xi)
 =\de_{1k} + \de_{2k} |\xi| \int^t_0\beta(s)\,ds
-|\xi|^2 \int^t_0 \(\int^t_{\tau}\beta(s)\,ds \)
  \beta(\tau)^{-1} w_{1k}(\tau,\xi)\,d\tau. 
\end{split}
\end{equation}
Let us define 
$w_1(t,\xi)$, $w_2(t,\xi)$, $p_1(t,\xi)$, $p_2(t,\xi)$ and $q(t,\tau,\xi)$ by 
\[
  w_1(t,\xi) = |\xi| \beta(t)^{-1}  w_{11}(t,\xi),\;\;
  w_2(t,\xi) = \beta(t)^{-1}w_{12}(t,\xi),
\]
\[
  p_1(t,\xi) = |\xi|\beta(t)^{-1},\;\;
  p_2(t,\xi) = |\xi|\beta(t)^{-1} \int^t_0\beta(\tau)\,d\tau
\]
and
\[
  q(t,\tau,\xi)=-|\xi|^2\beta(t)^{-1} \int^t_{\tau}\beta(s)\,ds. 
\]
Then $w_k(t,\xi)$ is a solution to the Volterra integral equation \eqref{Vol} with $p(t)=p_k(t,\xi)$ for $k=1,2$, and we may apply Lemma \ref{lemm-Vol} to obtain \eqref{est-|wjk|}. 

We define $\om(t)$ on $[0,\infty)$ by
\[
  \om(t):=\exp\(\int_t^\infty \frac{\si(\eta(s))}{1+s}\,ds\). 
\]
By Lemma \ref{lemma_int_si0} with $\si_\infty=\si(0)$ and noting that 
$\om(t)=\om(0)(1+t)^m \be(t)$, there exist positive constants $\om_0$ and $\om_1$ such that 
\[
  \om_0:=\inf_{t \ge 0}\{\om(t)\}
  \;\;\text{ \textit{and} }\;\;
  \om_1:=\sup_{t \ge 0}\{\om(t)\}, 
\]
and the following estimates hold: 
\begin{equation}\label{etamu-etab}
  \frac{\om_0}{\om(0)}(1+t)^{-m}
  \le \beta(t) 
  \le \frac{\om_1}{\om(0)}(1+t)^{-m}. 
\end{equation}

\begin{lemma}\label{lemma-est_vjk}
The following estimates hold: 
\begin{equation}\label{lemma-est_vjk-eq}
  |w_{j1}(t,\xi)| \le K_1 
  \;\;\text{ \textit{and} }\;\;
  |w_{j2}(t,\xi)| \le K_2 (1+t)^{-m} 
\end{equation} 
for $j=1,2$ in $Z_D(N)$, where
\[
  K_1 = N\(\frac{\om_1}{\om_0}\)^2 \exp\(\sqrt{\frac{\om_1}{\om_0}}N\)
  \;\;\text{ \textit{and} }\;\;
  K_2 = \frac{\om_1}{\om(0)} \(1+\frac{N K_1}{1-m} \).
\]
\end{lemma}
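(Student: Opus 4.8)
The plan is to estimate all four entries $w_{jk}$ by pushing the two Volterra solutions $w_1,w_2$ through Lemma \ref{lemm-Vol} and then reading the $w_{jk}$ back off through the weight $\beta$. The degenerate frequency $|\xi|=0$ is trivial: there $w_{11}\equiv1$, $w_{12}\equiv w_{21}\equiv0$ and $w_{22}=\beta$, so \eqref{lemma-est_vjk-eq} holds at once. Assume henceforth $|\xi|>0$, so that $w_1=|\xi|\beta^{-1}w_{11}$ and $w_2=\beta^{-1}w_{12}$ solve \eqref{Vol} with inhomogeneities $p_1,p_2$ and kernel $q$ as defined above. The whole argument then reduces to (a) feeding the correct bounds on $p_k$ and $q$ into Lemma \ref{lemm-Vol}, and (b) converting the resulting bounds on $w_1,w_2$ back to the two rows of the matrix $W$.

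First I would record the ingredient estimates on $Z_D(N)$, all consequences of \eqref{etamu-etab}. For $s\le t$ one has $\beta(s)/\beta(t)\le(\om_1/\om_0)((1+t)/(1+s))^m$, whence $|q(t,\tau,\xi)|=|\xi|^2\int_\tau^t\beta(s)/\beta(t)\,ds\le\frac{\om_1}{\om_0(1-m)}|\xi|^2(1+t)$; likewise $p_1(t,\xi)\le\frac{\om(0)}{\om_0}|\xi|(1+t)^m$ is increasing in $t$, and $p_2(t,\xi)\le\frac{\om_1}{\om_0(1-m)}|\xi|(1+t)$. The decisive feature is that in $Z_D(N)$ we have $(1+t)|\xi|\le N$, so these quantities are uniformly controlled: the kernel mass stays bounded and $\sup_{s\le t}p_2\le\frac{\om_1}{\om_0(1-m)}N$. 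Heuristically, $|\xi|\sqrt{\om_1/\om_0}$ is the effective oscillation frequency, and it accumulates only up to $|\xi|(1+t)\le N$, which is precisely the origin of the factor $\exp(\sqrt{\om_1/\om_0}N)$.

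Applying Lemma \ref{lemm-Vol} then amplifies the inhomogeneity by a factor of order $\frac{\om_1}{\om_0}\exp(\sqrt{\om_1/\om_0}N)$, i.e. it produces a bound of the form $|w_k(t,\xi)|\le(\sup_{s\le t}|p_k(s,\xi)|)\,\frac{\om_1}{\om_0}\exp(\sqrt{\om_1/\om_0}N)$. For $k=1$ this gives $|w_{11}|=\frac{\beta}{|\xi|}|w_1|\le\frac{\om_1}{\om_0}\exp(\sqrt{\om_1/\om_0}N)\le K_1$, while for $k=2$ the factor $\beta(t)\le\frac{\om_1}{\om(0)}(1+t)^{-m}$ in $w_{12}=\beta w_2$ converts the bounded factor $\sup_{s\le t}p_2\le\frac{\om_1}{\om_0(1-m)}N$ into the asserted decay $|w_{12}|\le K_2(1+t)^{-m}$.

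For the second row I would use the identities that follow from \eqref{iq_w2k} together with $w_1=|\xi|\beta^{-1}w_{11}$ and $w_2=\beta^{-1}w_{12}$, namely $w_{21}(t,\xi)=-\beta(t)\int_0^t w_1(\tau,\xi)\,d\tau$ and $w_{22}(t,\xi)=\beta(t)(1-|\xi|\int_0^t w_2(\tau,\xi)\,d\tau)$. Inserting the bounds on $w_1,w_2$ and the elementary estimate $\beta(t)\int_0^t\beta(\tau)^{-1}\,d\tau\le\frac{\om_1}{\om_0(1+m)}(1+t)$ from \eqref{etamu-etab}, and using $(1+t)|\xi|\le N$ once more, gives $|w_{21}|\le K_1$ and $|w_{22}|\le K_2(1+t)^{-m}$; the constants $K_1,K_2$ in the statement are exactly the maxima over the two entries of each column. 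The main obstacle is step (a): verifying the hypotheses of Lemma \ref{lemm-Vol} for the genuinely non-constant kernel $q$ and extracting the clean exponential $\exp(\sqrt{\om_1/\om_0}N)$ rather than a cruder Gronwall factor such as $\exp(cN^2/(1-m))$ coming from $\int_0^t|q(t,\tau,\xi)|\,d\tau$. It is the double-integral structure of $q$ together with the monotone control $\beta(s)/\beta(t)\le(\om_1/\om_0)((1+t)/(1+s))^m$ that makes the sharp bound possible; everything after that is bookkeeping with \eqref{etamu-etab}.
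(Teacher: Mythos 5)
Your overall strategy coincides with the paper's: write $w_1=|\xi|\beta^{-1}w_{11}$ and $w_2=\beta^{-1}w_{12}$ as Neumann series via Lemma \ref{lemm-Vol}, bound $p_k$ and $q$ through \eqref{etamu-etab}, and read the four entries of $W$ back off through $\beta$ and the relations \eqref{iq_w1k}--\eqref{iq_w2k}. However, the one step you explicitly defer --- obtaining the factor $\exp(\sqrt{\om_1/\om_0}\,N)$ rather than a Gronwall factor of order $\exp(cN^2/(1-m))$ --- is precisely the content of the paper's proof, and without it the lemma with the stated constants $K_1,K_2$ is not established. Lemma \ref{lemm-Vol} is only a formal series representation; all of the work lies in bounding its $l$-th term, and ``applying'' it does not by itself yield the amplification factor you assert. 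This is a genuine gap, not bookkeeping.

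The missing mechanism is the inductive estimate \eqref{intpqj}. One must keep the kernel in the unreduced form $|q(t,\tau,\xi)|\le\frac{\om_1}{\om_0}|\xi|^2(1+t)^m\int_\tau^t(1+\tau_1)^{-m}\,d\tau_1$ and retain the weight $(1+\tau_l)^m$ carried by the previous iterate inside the integral; then the pointwise inequality $(1+\tau_1)^m(1+\tau_2)^{-m}\le1$ for $\tau_2\ge\tau_1$ reduces each new application of $q$ to $\frac{\om_1}{\om_0}|\xi|^2\int_0^t(t-\tau_1)\,\tau_1^{2l}\,d\tau_1=\frac{\om_1}{\om_0}|\xi|^2\,t^{2l+2}/((2l+1)(2l+2))$. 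This cancellation is what produces the $(2l)!$ in the denominator and hence the convergent series $\sum_l\frac{1}{(2l)!}\bigl(\sqrt{\om_1/\om_0}\,|\xi|t\bigr)^{2l}\le\exp\bigl(\sqrt{\om_1/\om_0}\,N\bigr)$ on $Z_D(N)$. Your kernel bound $|q|\le\frac{\om_1}{\om_0(1-m)}|\xi|^2(1+t)$ has already discarded the pairing between the $(1+\tau)^{-m}$ inside $q$ and the $(1+\tau)^m$ of the inhomogeneity, and from it only the crude $N^2$ exponent can follow. Once \eqref{intpqj} is proved, the remainder of your argument --- the conversion to $w_{11},w_{12}$ via $\beta$, and the second row via $w_{21}=-\beta\int_0^t w_1\,d\tau$ and $w_{22}=\beta\bigl(1-|\xi|\int_0^t w_2\,d\tau\bigr)$ together with $(1+t)|\xi|\le N$ --- does match the paper's computation.
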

\begin{proof}
By \eqref{etamu-etab}, we have 
\begin{equation}\label{|p1|}
  |p_1(t,\xi)| \le \frac{\om(0)}{\om_0}|\xi|(1+t)^m, 
\end{equation}
\begin{equation}\label{|p2|}
  |p_2(t,\xi)| 
  \le \frac{\om_1}{\om_0}|\xi|(1+t)^m \int^t_0 \frac{d\tau}{(1+\tau)^{m}}
  \le \frac{\om_1}{\om_0(1-m)}|\xi|(1+t)
\end{equation}
and
\begin{equation}\label{|q|}
  |q(t,\tau,\xi)| 
  \le \frac{\om_1}{\om_0}
  |\xi|^2 (1+t)^m \int^t_{\tau}\frac{d\tau_1}{(1+\tau_1)^{m}}. 
\end{equation}
Suppose that the following inequalities hold for any $l \in \N$: 
\begin{align*}
\int^t_0|q(t,\tau_1,\xi)|\int^{\tau_1}_0|q(\tau_1,\tau_2,\xi)|
  \cdots\int^{\tau_{l-1}}_0|q(\tau_{l-1},\tau_l,\xi)|
  |p_k(\tau_l,\xi)|\,d\tau_l\cdots d\tau_1 
\end{align*}
\begin{equation}\label{intpqj}
\le
\begin{dcases}
  \frac{\om(0)}{\om_0} |\xi|(1+t)^m 
      \frac{1}{(2l)!}\(\sqrt{\frac{\om_1}{\om_0}} |\xi|t \)^{2l} 
 & \textit{for}\;\; k=1,\\
  \frac{\om_1}{\om_0(1-m)}|\xi|(1+t)^m
     \frac{1}{(2l)!}
    \(\sqrt{\frac{\om_1}{\om_0}} |\xi|t \)^{2l} 
 & \textit{for}\;\; k=2. 
\end{dcases}
\end{equation}
Then, by Lemma \ref{lemm-Vol} and $|\xi| t \le N$, 
we have 
\begin{align*}
  |w_1(t,\xi)| 
&\le \frac{\om(0)}{\om_0}|\xi|(1+t)^m 
  + \sum_{j=1}^\infty \frac{\om(0)}{\om_0}|\xi| (1+t)^m
      \frac{1}{(2j)!}\(\sqrt{\frac{\om_1}{\om_0}} N\)^{2j}
\\
&\le \frac{\om(0)}{\om_0}\exp\(\sqrt{\frac{\om_1}{\om_0}} N\)
  |\xi| (1+t)^m. 
\end{align*}
Analogously, we have 
\begin{align*}
  |w_2(t,\xi)| 
 \le
  \frac{\om_1}{\om_0(1-m)}\exp\(\sqrt{\frac{\om_1}{\om_0}} N\)
   |\xi| (1+t).
\end{align*}
Therefore, we have
\begin{align*}
  |w_{11}(t,\xi)|
&\le |\xi|^{-1} \beta(t)|w_1(t,\xi)|
 \le  (1+t)^m \be(t) \frac{\om(0)}{\om_0} \exp\(\sqrt{\frac{\om_1}{\om_0}} N\) 
\\
&\le  \frac{\om_1}{\om_0} 
   \exp\(\sqrt{\frac{\om_1}{\om_0}} N\)
  \le K_1, 
\end{align*}
\begin{align*}
  |w_{21}(t,\xi)|
&\le |\xi| \beta(t) \int^t_0 \beta(\tau)^{-1}|w_{11}(\tau,\xi)|\,d\tau
\\
&\le 
  \(\frac{\om_1}{\om_0}\)^2 \exp\(\sqrt{\frac{\om_1}{\om_0}} N\)
  |\xi| (1+t)^{-m} \int^t_0 (1+\tau)^{m}\,d\tau
\\
&\le\(\frac{\om_1}{\om_0}\)^2 \exp\(\sqrt{\frac{\om_1}{\om_0}} N\)
  |\xi| t
 \le N \(\frac{\om_1}{\om_0}\)^2 \exp\(\sqrt{\frac{\om_1}{\om_0}} N\)
\\
&=K_1
\end{align*}
and
\begin{align*}
  |w_{12}(t,\xi)|
&=\beta(t)|w_2(t,\xi)|
\\
&\le \frac{\om_1}{\om(0)}(1+t)^{-m}
  \frac{\om_1}{\om_0(1-m)}
  \exp\(\sqrt{\frac{\om_1}{\om_0}} N\)
  |\xi|(1+t)
\\
&\le 
  \frac{\om_1^2}{\om_0\om(0)(1-m)} N
  \exp\(\sqrt{\frac{\om_1}{\om_0}} N\)
  (1+t)^{-m}
 = \frac{\om_0 K_1}{\om(0)(1-m)} (1+t)^{-m}
\\
&\le K_2(1+t)^{-m}. 
\end{align*}
Moreover, we have 
\begin{align*}
  |w_{22}(t,\xi)|
&\le \beta(t)
  \(1 + |\xi|\int^t_0 \beta(\tau)^{-1}|w_{12}(\tau,\xi)|\,d\tau\)
\\
&\le \frac{\om_1}{\om(0)}(1+t)^{-m}
  \(1 + \frac{\om(0)}{\om_0}|\xi|\int^t_0 (1+\tau)^m|w_{12}(\tau,\xi)|\,d\tau\)
\\
&\le \frac{\om_1}{\om(0)}(1+t)^{-m} \(1 + \frac{K_1}{1-m} |\xi|t\)
 \le \frac{\om_1}{\om(0)} \(1 + \frac{N K_1}{1-m}\) (1+t)^{-m}
\\
&  =  K_2(1+t)^{-m}. 
\end{align*}
Thus we have \eqref{lemma-est_vjk-eq}. 
Let us prove the inequalities \eqref{intpqj} for $l \ge 2$ by induction. 
By \eqref{|p1|} and \eqref{|q|}, we have 
\begin{align*}
  \int^t_0 |q(t,\tau_1,\xi)| |p_1(\tau_1,\xi)|\,d\tau_1
& \le
  \frac{\om_1 \om(0)}{\om_0^2}
  |\xi|^3 (1+t)^m \int^t_0 (1+\tau_1)^m 
  \int^t_{\tau_1} \frac{d\tau_2}{(1+\tau_2)^{m}} \,d\tau_1
\\
& \le
  \frac{\om_1\om(0)}{\om_0^2}
  |\xi|^3 (1+t)^m \int^t_0 (1+\tau_1)^m 
  \int^t_{\tau_1} \frac{d\tau_2}{(1+\tau_1)^{m}} \,d\tau_1
\\
& =
  \frac{\om_1 \om(0)}{\om_0^2}
  |\xi|^3 (1+t)^{m} \int^t_0 \int^t_{\tau_1} \,d\tau_2 \,d\tau_1
\\
&=
  \frac{\om(0)}{\om_0}
  |\xi| (1+t)^{m} \frac{1}{2!}
  \(\sqrt{\frac{\om_1}{\om_0}}|\xi|t\)^2. 
\end{align*}
Thus \eqref{intpqj} is valid for $k=1$ and $l=1$. 
Suppose that \eqref{intpqj} is valid for $k=1$. 
Then we have 
\begin{align*}
& \int^t_0|q(t,\tau_1,\xi)|\int^{\tau_1}_0|q(\tau_1,\tau_2,\xi)|
  \cdots\int^{\tau_{l}}_0|q(\tau_{l},\tau_{l+1},\xi)|
  |p_1(\tau_{l+1},\xi)|\,d\tau_{l+1}\cdots d\tau_1 
\\
&\le \int^t_0
  \(\frac{\om_1}{\om_0}|\xi|^2 (1+t)^{m}
    \int^t_{\tau_1} \frac{d\tau_2}{(1+\tau_2)^{m}}\)
  \frac{\om(0)}{\om_0} |\xi|(1+\tau_1)^{m}
      \frac{1}{(2l)!}\(\sqrt{\frac{\om_1}{\om_0}} |\xi|\tau_1\)^{2l}
   d\tau_1 
\\
&= \frac{\om(0)}{\om_0}
  \(\sqrt{\frac{\om_1}{\om_0}} |\xi|\)^{2(l+1)}
  |\xi| (1+t)^{m}
  \frac{1}{(2l)!} \int^t_0
  (1+\tau_1)^{m} \int^t_{\tau_1} \frac{d\tau_2}{(1+\tau_2)^{m}}
       \tau_1^{2l}\,
   d\tau_1 
\\
&\le \frac{\om(0)}{\om_0}
  \(\sqrt{\frac{\om_1}{\om_0}} |\xi|\)^{2(l+1)}
  |\xi| (1+t)^{m}
  \frac{1}{(2l)!} \int^t_0
  (1+\tau_1)^{m} \int^t_{\tau_1} \frac{d\tau_2}{(1+\tau_1)^{m}}
       \tau_1^{2l}\,
   d\tau_1 
\\
&\le \frac{\om(0)}{\om_0}
  \(\sqrt{\frac{\om_1}{\om_0}} |\xi|\)^{2(l+1)}
  |\xi| (1+t)^{m}
  \frac{1}{(2l)!} \int^t_0
  \(\int^t_{\tau_1}\,d\tau_2\)
       \tau_1^{2l}\,
   d\tau_1 
\\
&= \frac{\om(0)}{\om_0} |\xi| (1+t)^{m}
  \frac{1}{(2(l+1))!}
  \(\sqrt{\frac{\om_1}{\om_0}} |\xi| t\)^{2(l+1)}. 
\end{align*}
Therefore, \eqref{intpqj} with $k=1$ is valid for any $l \in \N$. 
On the other hand, by the same way as for $k=1$, we have 
\begin{align*}
&  \int^t_0 |q(t,\tau_1,\xi)| |p_2(\tau_1,\xi)|\,d\tau_1
\\
& \le
  \(\frac{\om_1}{\om_0}\)^2
  |\xi|^3 (1+t)^m 
  \int^t_0 
  \(\int^t_{\tau_1}\frac{d\tau_2}{(1+\tau_2)^{m}}\)
  (1+\tau_1)^{m} \int^{\tau_1}_0 \frac{d\tau_2}{(1+\tau_2)^{m}}
  \,d\tau_1
\\
& \le
  \(\frac{\om_1}{\om_0}\)^2
  |\xi|^3 (1+t)^m 
  \int^t_0 
  \(\int^t_{\tau_1}\,d\tau_2\)
  \int^{\tau_1}_0 \frac{d\tau_2}{(1+\tau_2)^{m}}
  \,d\tau_1
\\
& =
  \(\frac{\om_1}{\om_0}\)^2
  |\xi|^3 (1+t)^m
  \(
    \frac12 t^2 \int^{t}_0 \frac{d\tau_1}{(1+\tau_1)^{m}}
   -\int^t_0 \(t\tau_1-\frac12\tau_1^2\) 
    \frac{1}{(1+\tau_1)^{m}} \,d\tau_1
  \)
\\
& \le
  \frac{\om_1}{\om_0} |\xi| (1+t)^m
  \int^{t}_0 \frac{d\tau_1}{(1+\tau_1)^{m}} 
  \frac{1}{2!} 
  \(\sqrt{\frac{\om_1}{\om_0}}|\xi| t\)^2 
\\
& \le \frac{\om_1 |\xi| (1+t)}{\om_0(1-m)} 
  \frac{1}{2!} 
  \(\sqrt{\frac{\om_1}{\om_0}}|\xi| t\)^2. 
\end{align*}
Thus \eqref{intpqj} is valid for $l=1$. 
Suppose that \eqref{intpqj} is valid for $k=2$ and $l \ge 2$. 
Then we have 
\begin{align*}
& \int^t_0|q(t,\tau_1,\xi)|\int^{\tau_1}_0|q(\tau_1,\tau_2,\xi)|
  \cdots\int^{\tau_{l}}_0|q(\tau_{l},\tau_{l+1},\xi)|
  |p_2(\tau_{l+1},\xi)|\,d\tau_{l+1}\cdots d\tau_1 
\\
&\le
   \int^t_0
   \(\frac{\om_1}{\om_0}
   |\xi|^2 (1+t)^m 
   \int^t_{\tau_1} \frac{d\tau_2}{(1+\tau_2)^{m}} \)
   \frac{\om_1}{\om_0(1-m)}|\xi|(1+\tau_1)^m \frac{1}{(2l)!}
   \(\sqrt{\frac{\om_1}{\om_0}} |\xi| \tau_1\)^{2l} \,d\tau_1
\\
&\le
   \frac{\om_1}{\om_0(1-m)} |\xi| (1+t)^m
   \frac{1}{(2l)!} \(\sqrt{\frac{\om_1}{\om_0}} |\xi|\)^{2l+2}
   \int^t_0\(\int^t_{\tau_1} d\tau_2 \) \tau_1^{2l}\,d\tau_1
\\
&= \frac{\om_1}{\om_0(1-m)} |\xi|(1+t)^m
   \frac{1}{(2(l+1))!} \(\sqrt{\frac{\om_1}{\om_0}} |\xi| t\)^{2(l+1)}.
\end{align*}
Therefore, \eqref{intpqj} is also valid for $k=2$ and any $l \in \N$. 
\end{proof}

\noindent 
\textit{Proof of Proposition \ref{Prop1} (iii)}.\;
By \eqref{est-cE_(iii)}, Lemma \ref{lemma-est_vjk} and denoting 
$K_D:=4\max\{K_1^2,K_2^2\}$, we have \eqref{est_cE-ZD2}. 
\qed

%
\subsection{Proof of Theorem \ref{Thm1}}
%
Let $(t,\xi) \in Z_D$. 
If \eqref{b>=0} is valid, then there exists a positive constant $C_m$ such that 
\begin{align*}
  \((1+t)^{-1}\ga(t;m)\)^2
& \le
 \begin{dcases}
    \frac{1}{(m-1)^2}(1+t)^{-2} & (m>1) \\[2mm]
    (1+t)^{-2}\(\log(1+t)\)^2 
      \le 4e^{-2} (1+t)^{-1}& (m=1) \\[2mm]
    \frac{1}{(1-m)^2} (1+t)^{-2m} & (0<m<1)
  \end{dcases}
\\
& \le C_m(1+t)^{-\tm_0}. 
\end{align*}
By Proposition \ref{Prop1} (ii), 
for any $0 < \tm \le \tm_0$ we have 
\begin{align*}
  \cE(t,\xi) 
 &\le K_D 
  \(N^{\tm}(1+t)^{-\tm}|\xi|^{2-\tm} |v_0(\xi)|^2 
  + \((1+t)^{-\tm}+N^2\((1+t)^{-1} \ga(t;m)\)^2\)|v_1(\xi)|^2\)
\\
 &\le \tilde{K}_D(1+t)^{-\tm} \(|\xi|^{2(1-\tm/2)} |v_0(\xi)|^2 
  + |v_1(\xi)|^2\), 
\end{align*}
where $\tilde{K}_D:=K_D \max\{N^{\tm}, 1+N^2 C_m \}$. 
Analogously, if \eqref{0<m<1} is valid, then by Proposition \ref{Prop1} (iii), for any $0 < \tm \le m$ we have 
\begin{align*}
  \cE(t,\xi) 
 &\le 
  K_D (1+t)^{-\tm}
  \((1+t)^{\tm}|\xi|^2 |v_0(\xi)|^2 + (1+t)^{-2m+\tm}|v_1(\xi)|^2\) 
\\
 &\le 
  K_D (1+t)^{-\tm}
  \(N^{\tm} |\xi|^{2(1-\tm/2)} |v_0(\xi)|^2 + (1+t)^{-2m+\tm}|v_1(\xi)|^2\) 
\\
 &\le 
  \tilde{K}_D (1+t)^{-\tm}
  \(|\xi|^{2(1-\tm/2)} |v_0(\xi)|^2 + |v_1(\xi)|^2\). 
\end{align*}
Let $(t,\xi) \in Z_{H1}$. 
By Proposition \ref{Prop1} (i) and the above estimates in $Z_D$, we have 
\begin{align*}
  \cE(t,\xi)
 &\le
  K_H \(\frac{1+t}{1+\cT_0}\)^{-m} \cE(\cT_0,\xi)
  \le
  K_H \(\frac{1+t}{1+\cT_0}\)^{-\tm} \cE(\cT_0,\xi)
  \\
 &\le
  K_H \tilde{K}_D (1+t)^{-\tm}
  \(|\xi|^{2(1-\tm/2)} |v_0(\xi)|^2 + |v_1(\xi)|^2\). 
\end{align*}
If $(t,\xi) \in Z_{H2}$, then we have 
\begin{align*}
  \cE(t,\xi) 
  \le 
  K_H \frac{\exp\(\ve \mu(t)\)}{(1+t)^{m-\tm}} (1+t)^{-\tm} \cE(0,\xi).
\end{align*}
Summarizing the estimates above, we have 
\begin{align*}
  \cE(t,\xi) 
  \le 
  C \(1 + \frac{\exp\(\ve \mu(t)\)}{(1+t)^{m-\tm}}\)
  (1+t)^{-\tm} \( \cE(0,\xi) + |\xi|^{2(1-\tm/2)}|v_0(\xi)|^2\)
\end{align*}
providing \eqref{Eest-Thm}. 
\qed
%
%
%
%
\section{Proof of Theorem \ref{Thm2}}
%
%

Let us introduce the following proposition instead of Proposition \ref{Prop1} (i) in $Z_{H2}$ in order to prove Theorem \ref{Thm2}. 

\begin{proposition}\label{Prop2}
Assume that (A1), (A2) and (A3) and valid. 
For any $\ve>0$, there exist positive constants $K_H$ and $\nu$ such that the following estimate holds in $Z_{H2}$: 
\begin{equation}\label{est_cE-ZH12}
  \cE(t,\xi) 
  \le K_H (1+t)^{-m} \exp\(2\nu \zeta(2|\xi|+\ve)\) \cE(0,\xi). 
\end{equation}

\end{proposition}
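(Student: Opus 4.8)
The plan is to refine the proof of Proposition~\ref{Prop1}~(i) in $Z_{H2}$, the only change being that the parameter $\ka$ is chosen depending on $|\xi|$ rather than fixed. Since $|\xi|\ge N$ in $Z_{H2}$ we have $\cT_0=0$, so \eqref{cE-th} gives
\[
  \cE(t,\xi)=\cE(0,\xi)\exp\(-2\int^t_0 b(s)\sin^2(\th(s,\xi))\,ds\),
\]
and I would split the exponent exactly as in the proof of Proposition~\ref{Prop1}~(i): the term $-m\log(1+t)$ produces the factor $(1+t)^{-m}$, the term $-\int^t_0\si(\eta(s))(1+s)^{-1}\,ds$ is controlled by $B_0$ via \eqref{B0}, and the whole improvement comes from estimating the oscillatory term $\int^t_0\si_0(\eta(s))\cos(2\th(s,\xi))(1+s)^{-1}\,ds$ more carefully.

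The key observation is that the factor $\mu(t)$ appearing in Lemma~\ref{lemma_int-b-th} is an overestimate: in the stationary regime the bound actually furnished by Lemma~\ref{Lemm-log(tau+/tau-)} is $2\ka\,\mu(\tau_{n+})$, with $\tau_{n+}=(\eta')^{-1}\((2|\xi|+\ka)/n\)$ from \eqref{def_tau+}, i.e.\ evaluated at the stationary time of the $n$-th mode and not at $t$. Because $\eta'$ is increasing, $\tau_{n+}\le(\eta')^{-1}(2|\xi|+\ka)$ for every $n\ge 1$, whence $\mu(\tau_{n+})\le\mu\((\eta')^{-1}(2|\xi|+\ka)\)=\zeta(2|\xi|+\ka)^2$; imposing $\ka\le\ve$ and using monotonicity of $\zeta$ bounds this by $\zeta(2|\xi|+\ve)^2$. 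Summing over the Fourier modes with the weights $|\al_n|+|\be_n|$, whose total is $A_1$ by \eqref{FC}, and using $t_0=0$, $|\xi|\ge N$ to absorb every non-stationary contribution (the $j=1,3$ terms, the $n=0$ term, and the reciprocal-in-$\ka$ terms) into constants, I expect a bound of the form
\[
  \left|\int^t_0 \frac{\si_0(\eta(s))\cos(2\th(s,\xi))}{1+s}\,ds\right|
  \le C_1+\frac{C_2}{\ka}+C_3\,\ka\,\zeta(2|\xi|+\ve)^2,
\]
valid for all $t$, all $\xi$ with $|\xi|\ge N$ and all $\ka\in(0,\ve]$, where $C_1,C_2,C_3$ depend only on $m$, $A_1$, $b_1$ and $N$.

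The final step is to optimize in $\ka$. Writing $Z:=\zeta(2|\xi|+\ve)$, the map $\ka\mapsto C_2/\ka+C_3\ka Z^2$ is minimized at $\ka^\ast=\sqrt{C_2/C_3}\,Z^{-1}$ with minimal value $2\sqrt{C_2 C_3}\,Z$. When $\ka^\ast\le\ve$ I would take $\ka=\ka^\ast$, obtaining an exponent linear in $Z$ and fixing $\nu$ so that $2\sqrt{C_2C_3}\,Z\le 2\nu Z$; when $\ka^\ast>\ve$, that is when $Z$ is bounded, I would take $\ka=\ve$, in which case the right-hand side is bounded by a constant and is trivially dominated by $\exp(2\nu Z)$. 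Exponentiating and collecting the factor $e^{B_0}$ from \eqref{B0}, the constant $e^{C_1}$, and the factor $(1+t)^{-m}$ then yields \eqref{est_cE-ZH12}. The main obstacle is precisely this $\xi$-dependent balancing: one must track the $\ka$-dependence of the stationary times $\tau_{n+}$ and control them uniformly in $n$ by their $n=1$ value under the constraint $\ka\le\ve$, confirm that the quadratic-in-$\zeta$ stationary contribution and the reciprocal non-stationary contribution are the only $\ka$-sensitive terms, and treat the two regimes $\ka^\ast\lessgtr\ve$ so that the estimate is uniform in $\xi$. Conceptually, this balancing is what converts the $\exp(\ve\mu(t))$ loss of Theorem~\ref{Thm1} into the $\xi$-localized weight $\exp(2\nu\zeta(2|\xi|+\ve))$.
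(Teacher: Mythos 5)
Your proposal is correct and follows essentially the same route as the paper: replace the $\mu(t)$ factor in the stationary-phase regime by $\mu(\tau_{1+})=\zeta(2|\xi|+\ka)^2$ using the monotonicity of $\{\tau_{n+}\}_n$ and of $\zeta$, then choose $\ka=\ka(\xi)\propto\zeta(2|\xi|+\ve)^{-1}$ so that both $\ka$-sensitive terms become linear in $\zeta(2|\xi|+\ve)$. The only (immaterial) difference is that the paper fixes $\ka=\ve\,\zeta(2N+\ve)/\zeta(2|\xi|+\ve)$, which satisfies $\ka\le\ve$ automatically since $|\xi|\ge N$, whereas you take the exact minimizer and treat the regime $\ka^\ast>\ve$ separately.
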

\begin{proof}
Let $(t,\xi) \in Z_{H2}$. 
Since $\{\tau_{n+}\}_{n=1}^\infty$ is monotone decreasing, 
in the proof of Lemma \ref{lemma_int-b-th}, 
if $[\tau_-,\tau_+] \subseteq [\tau_{n-},\tau_{n+}]$, then we have 
\begin{align*}
  \left|\int^{\tau_+}_{\tau_-}\frac{g_{j,n}(s)}{1+s}\,ds\right|
  \le 2\ka \mu(\tau_{+})
  \le 2\ka \mu(\tau_{1+})
  \le 2\ka\mu\((\eta')^{-1}\(2|\xi|+\ka\)\)
  = 2\ka \ze\(2|\xi|+\ka\)^2
\end{align*}
for $j=2,4$. 
For $\ve>0$, we define $\ka = \ka(\xi)$ satisfying $0<\ka \le \ve$ by 
\[
  \ka = \ve\frac{\zeta(2N+\ve)}{\zeta(2|\xi|+\ve)}. 
\]
Then \eqref{-2intbsin^2} with $\cT_0=0$ can be replaced as follows: 
\begin{align*}
&  -2\int^t_0 b(s)\sin^2\(\th(s,\xi)\)\,ds
\\
& \quad \le
   \log (1+t)^{-m}
  +B_0
 +\frac{(b_1+2)(2m + A_1)}{N}
 +2\(\frac{2A_1(b_1+2)}{\ve\zeta(2N+\ve)}+\ve A_1\zeta(2N+\ve)\) \zeta(2|\xi|+\ve). 
\end{align*}
Therefore, we have \eqref{est_cE-ZH12} by setting
\[
  K_H = \exp\(B_0 +\frac{(b_1+2)(2m + A_1)}{N}\)
  \;\; \text{ \textit{and} }\;\;
  \nu=\frac{2A_1(b_1+2)}{\ve\zeta(2N+\ve)}+\ve A_1\zeta(2N+\ve).
\]
\end{proof}

%
%
\section{Concluding remarks}
%
%

%
\subsection{Classification of oscillations}
%
Let us review the \textit{oscillations} of the dissipative coefficient that have been considered in this paper. 
Actually, we have not defined the oscillations in the strict sense, but we regarded the properties of the perturbation term $\de(t)$, that is, $\si(\eta)$ and $\eta(t)$, for the monotonic principal term $m/(1+t)$ in \eqref{de(t)} as oscillations. 

For $\si(\eta)$, (A1) can be regarded as an assumption on \textit{smoothness} in the sense of H\"older continuity, and (A2) as an assumption on the \textit{amplitude}. 
Indeed, \eqref{b>=0} means that the amplitude of the sign-changing function $\si(\eta)$ is less than or equal to $m$. 
On the other hand, \eqref{0<m<1} does not explicitly require any restriction on the amplitude of $\si(\eta)$, but it must be bounded by the periodicity and continuity. 
By (A3) and the examples, $\eta(t)$ describes the \textit{frequency} of $\si(\eta(t))$ which is increasing with respect to time. 
Amplitude and frequency, and also smoothness are considered typical properties that describe oscillations, and it ought to be ``quiet'' for the small perturbation. 
It is interesting and may be strange that the oscillations of the perturbation in the frequency sense should not be quiet for the energy decay from the examples of $\eta(t)$. 
However, such an increasing frequency and quietness of the perturbation may coexist if one recall the stabilization condition \eqref{ThmGH-e2}. 

The stabilization condition \eqref{ThmGH-e2} was introduced in \cite{GH25,HW08} as \eqref{ThmGH-e2}. 
In \eqref{ThmGH-e2}, the effect of the perturbation is smaller if $\ga$ can be taken larger. 
For example, if $\de(t)$ is given by 
\begin{equation}\label{de(t)-ex_conc}
  \de(t)=\frac{\sin\((1+t)^\al\)}{1+t}, 
\end{equation}
that is, $\si(\eta)=\sin\eta$ and Example \ref{ex_eta=t} for $\eta(t)$, the maximum choice of $\ga$ for \eqref{ThmGH-e2} is given in \eqref{ThmGH-albe}. 
In this case, since $\ga$ increases as $\al$ increases, the effect of perturbation $\de(t)$ is smaller as the increasing order of the frequency with respect to $t$ is larger, which is consistent with the claim in this paper. 
The effect of $\de(t)$ is also smaller if $\be$ can be taken larger in \eqref{ThmGH-e1}. 
However, the maximum choice of $\be$ given in \eqref{ThmGH-albe} decreases as $\al$ increases; thus larger order of the frequency is not always providing smaller effect of the perturbation. 

%
\subsection{A contribution of initial data with the Gevrey regularity to energy decay estimates}
%

The assertion of Theorem \ref{Thm2} is that even when the oscillations of the perturbation is small in the frequency sense, for which Theorem \ref{Thm1} cannot be applied, it is possible to obtain an energy decay estimate if the initial data is sufficiently smooth such as in the Gevrey class. 
In \cite{GH25}, the following result is proved for the contribution of the initial data in the Gevrey class on the energy decay estimate: 
\begin{theorem}[Example 2.6 \cite{GH25}]\label{ThmGH25-2}
Let $0<m<1$, $\al > 1$, $p < 1$ and $b(t)$ be given by \eqref{de(t)} with
\begin{equation}\label{ex_ThmGH-e1}
  \de(t) = \frac{\sin\((1+t)^\al\)}{(1+t)^p}. 
\end{equation}
If $u_0,u_1 \in G_\infty^s$ with 
\begin{equation}\label{ex_ThmGH-e2}
  1 < s  < \frac{\al-1}{1-p}, 
\end{equation}
then \eqref{Eest-Thm2} holds for $\tm=m$. 
\end{theorem}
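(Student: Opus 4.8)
The plan is to follow the zone-decomposition strategy of \cite{GH25}: after passing to the ODE \eqref{v} by partial Fourier transform, I would separate time-frequency space into a dissipative part and a hyperbolic part, treat the former exactly as in Proposition \ref{Prop1}, and concentrate all the work on the high-frequency hyperbolic region, where the Gevrey regularity of the data is used to absorb the loss of decay forced by $p<1$. The first step is bookkeeping: for $\de(t)=\sin((1+t)^\al)/(1+t)^p$ one checks, as recorded in \eqref{ThmGH-albe}, that the $C^k$-property \eqref{ThmGH-e1} and the stabilization condition \eqref{ThmGH-e2} hold for every $k\ge1$ with $\be=-\al+1+(p+\al-1)/(k+1)$ and $\ga=\al-1$. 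Substituting into \eqref{ThmGH-bega} leaves the deficit $(p-1)/(k+1)<0$, so Theorem \ref{ThmGH} never yields $\tm=m$ on its own; the missing $(1-p)/(k+1)$ must be paid for by the decay of $\hat{u}_0,\hat{u}_1$ at infinity.

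For the decay itself I would cut the time axis, for each fixed $\xi$ with $|\xi|\ge N$, at the turning time $T_\xi$ determined by $\al(1+T_\xi)^{\al-1}\sim 2|\xi|$, i.e. $1+T_\xi\sim|\xi|^{1/(\al-1)}$, where the oscillation frequency of $b$ crosses the spatial frequency. On $[T_\xi,\infty)$ the oscillation is faster than $|\xi|$ and the stabilization condition \eqref{ThmGH-e2} (with $\ga=\al-1>0$) guarantees that the oscillatory part of $b$ contributes only a bounded multiplicative factor, so the principal decay $(1+t)^{-m}$ survives there, in the spirit of Proposition \ref{Prop1}(i). The delicate region is $[0,T_\xi]$, where $|\xi|$ dominates: here I would diagonalize the first-order system associated with \eqref{v} $k$ times, each step licensed by one more derivative of $b$ through \eqref{ThmGH-e1} and dividing the remainder by the elliptic gap $\sim|\xi|$.

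The key computation is the size of the accumulated loss. The order-$k$ remainder behaves like $|\de^{(k)}(s)|/|\xi|^k\sim(1+s)^{k(\al-1)-p}/|\xi|^k$, and since $(1+s)^{\al-1}\sim|\xi|$ at $s=T_\xi$, its integral over $[0,T_\xi]$ is governed by the upper endpoint:
\[
  \int_0^{T_\xi}\frac{|\de^{(k)}(s)|}{|\xi|^{k}}\,ds
  \;\lesssim\;
  |\xi|^{-k}\,(1+T_\xi)^{k(\al-1)-p+1}
  \;\sim\;
  |\xi|^{\frac{1-p}{\al-1}},
\]
the power of $|\xi|$ being independent of $k$ (only the constant depends on $k$, growing like $\al^k$ from differentiating the phase). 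Consequently the energy density picks up in the high-frequency region a factor $\exp(\nu|\xi|^{(1-p)/(\al-1)})$, yielding the pointwise bound $\cE(t,\xi)\le C\exp(2\nu|\xi|^{(1-p)/(\al-1)})(1+t)^{-m}\cE(0,\xi)$; this plays the role of Proposition \ref{Prop2}, with the weight $\exp(2\nu\zeta(2|\xi|+\ve))$ replaced by the example's loss weight $\exp(2\nu|\xi|^{(1-p)/(\al-1)})$.

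Finally I would insert these pointwise estimates into the Parseval identity \eqref{E-cE}. If $u_0,u_1\in G^s_\infty$ with $1<s<(\al-1)/(1-p)$, then $1/s>(1-p)/(\al-1)$, so for $\ka$ large the Gevrey weight $\exp(\ka|\xi|^{1/s})$ dominates the loss $\exp(2\nu|\xi|^{(1-p)/(\al-1)})$ uniformly in $\xi$; the weighted integral of the form \eqref{Eest-Thm2} is therefore finite and the estimate holds with $\tm=m$. I expect the main obstacle to be the diagonalization bookkeeping on $[0,T_\xi]$: proving rigorously that the order-$k$ remainder integrates to the clean, $k$-independent power $(1-p)/(\al-1)$, and controlling the $k$-dependent constants well enough that the loss bound on $[0,T_\xi]$ and the full decay rate on $[T_\xi,\infty)$ can be reconciled in a single estimate.
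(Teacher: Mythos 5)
The first thing to say is that this paper contains no proof of Theorem \ref{ThmGH25-2}: it is quoted verbatim from \cite{GH25} (Example 2.6 there) in the concluding remarks, as an illustration of how Gevrey data can compensate for oscillations whose amplitude is too large for the present paper's assumption (A2). So there is no in-paper argument to compare yours against; the closest internal analogue is the pair Theorem \ref{Thm2}/Proposition \ref{Prop2}, which trades a \emph{frequency}-type loss $\exp(2\nu\zeta(2|\xi|+\ve))$ (coming from $\mu(t)\to\infty$) against Gevrey data, whereas here the loss comes from the amplitude $(1+t)^{-p}$ with $p<1$. Your sketch is a reasonable reconstruction of the \cite{GH25}/\cite{HW08} methodology, and the part of it that can be checked from this paper checks out: the parameters $\be=-\al+1+(p+\al-1)/(k+1)$, $\ga=\al-1$ and the deficit $(p-1)/(k+1)$ in \eqref{ThmGH-bega} agree with \eqref{ThmGH-albe}, the turning time $(1+T_\xi)^{\al-1}\sim|\xi|$ is exactly the resonance time that Lemma \ref{Lemm-phi_n} isolates for $\eta(t)=(1+t)^\al$, and the exponent arithmetic $|\xi|^{-k}(1+T_\xi)^{k(\al-1)-p+1}\sim|\xi|^{(1-p)/(\al-1)}$ is correct and, crucially, reproduces exactly the Gevrey threshold $1/s>(1-p)/(\al-1)$, i.e.\ $s<(\al-1)/(1-p)$, in the statement. (Note also that ``\eqref{Eest-Thm2} holds for $\tm=m$'' in the statement is evidently intended as the plain decay $E(t)\le C(1+t)^{-m}$ for such data, since \eqref{Eest-Thm2} as written carries the $\zeta$-weight of Theorem \ref{Thm2}.)

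That said, what you have is a plan with the two hardest steps asserted rather than carried out, and you should not present it as a proof. First, the claim that on $[T_\xi,\infty)$ the stabilization condition \eqref{ThmGH-e2} ``guarantees a bounded multiplicative factor'' is precisely where the work of \cite{GH25} lives: \eqref{ThmGH-e2} controls iterated primitives of $\de$, and converting that into a bound on $\int\de(s)\cos(2|\xi|s+2h(s))\,ds$ requires integrations by parts that reintroduce powers of $|\xi|$ and derivatives of the phase correction $h$; it is not an immediate consequence of $\ga=\al-1>0$, and for $p<1$ the natural stabilization exponent is in fact $\al+p-2<\al-1$, so even the bookkeeping of which $\ga$ is admissible needs care. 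Second, on $[0,T_\xi]$ you invoke a $k$-fold diagonalization without specifying the symbol hierarchy, the invertibility of the diagonalizers (which requires $|\de^{(l)}|/|\xi|^{l}$ small, i.e.\ exactly the zone boundary you chose), or how the $k$-dependent constants are reconciled with the $k$-independent loss exponent; as your own computation shows, already one step produces the full loss $|\xi|^{(1-p)/(\al-1)}$, so the role of large $k$ (namely, making \eqref{ThmGH-e1} hold with the required $\be$) should be articulated. The final absorption step is fine: since $(1-p)/(\al-1)<1/s$, the weight $\exp(2\nu|\xi|^{(1-p)/(\al-1)})$ is dominated by $\exp(\ka|\xi|^{1/s})$ for every $\ka>0$, so membership in $G^s_\infty$ suffices. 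In short: right skeleton, right exponents, but the oscillatory-integral and diagonalization estimates that constitute the actual proof are missing.
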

%
The oscillations of the perturbation in the frequency sense of the dissipative coefficient $b(t)$ do not have a bad influence on the energy decay estimate according to the consideration of this paper since $\eta(t)=(1+t)^\al$ with $\al>1$. On the other hand, (A2) is not valid since $\de(t)=O(t^{-1})$ ($t\to\infty$) does not hold for $p<1$. 
Thus the oscillations in the amplitude sense of \eqref{ex_ThmGH-e1} are too large to be handled in this paper, but Theorem \ref{ThmGH25-2} asserts that energy decay estimate is possible to be valid due to the contribution of the smoothness of the initial data.

%
\subsection{Wave equations with time dependent propagation speed}
%
There have been many research results on the wave equation with time dependent propagation speed: 
\begin{equation}\label{uW}
  \(\pa_t^2  - a(t)^2 \Delta \) u(t,x) = 0,
\end{equation}
where $a(t)$ satisfy $a_0 \le a(t) \le a_1$ for positive constants $a_0$ and $a_1$. 
One of the most fundamental problem to the initial value problem of \eqref{uW} is to determine the condition to $a(t)$ for the energy estimates 
$C^{-1} E(0;u) \le E(t;u) \le C E(0;u)$ ($C>1$) hold. 
This property is called the generalized energy conservation (GEC) and corresponds to the energy conservation $E(t;u) = E(0;u)$, which holds when $a(t)=1$. 
For example, in \cite{EFH15,H07}, the following conditions were introduced under the assumption $a \in C^k([0,\infty))$ with $k \ge 2$: 
\begin{equation}\label{C^k-w}
  \max_{1 \le l \le k}
  \sup_{t \ge 0}\left\{
  (1+t)^{\be l} \left|a^{(l)}(t)\right|
  \right\}<\infty
\end{equation}
and 
\begin{equation}\label{stb-w}
  \sup_{t \ge 0}\left\{
    (1+t)^{\ga} \int^\infty_{t} |a(s)-a_\infty|\,ds
  \right\}<\infty
  \;\;\text{ \textit{ or } }\;\;
  \sup_{t \ge 0}\left\{
    (1+t)^{\ga} \int^t_{0} |a(s)-a_\infty|\,ds
  \right\}<\infty. 
\end{equation}

By change of time variable $\tau := \int^t_0 a(s)\,ds$, 
\eqref{uW} is reduced to the dissipative wave equation 
\begin{equation}\label{dw_w}
  \(\pa_\tau^2 - \Delta + b(\tau) \pa_\tau\) w(\tau,x) = 0, 
\end{equation}
where 
\[
  \tilde{a}(\tau)=a(t(\tau)),
  \;\;
  b(\tau)=\frac{\tilde{a}'(\tau)}{\tilde{a}(\tau)} 
  \;\;\text{ \textit{and} }\;\;
  w(\tau,x)=w(t(\tau),x). 
\]
Here we note that $a_0 t \le \tau \le a_1 t$ holds. 
GEC for \eqref{uW} and energy decay problem for \eqref{dw_w} are not means equivalent, but are thought to share some common structure. 
Indeed, \eqref{ThmGH-e1} and \eqref{ThmGH-e2} correspond to \eqref{C^k-w} and \eqref{stb-w}, respectively. 
On the problem for GEC, the necessity of the condition $a \in C^2$ has been a matter of concern. 
On the other hand, in the dissipative wave equation \eqref{dw_w}, it corresponds to $b \in C^1$, and \cite{GG25} and our main theorems show that this is not necessarily required if the oscillation in the amplitude sense is small represented by (A2). 
The necessity of $a \in C^3$ is studied in a very recent paper \cite{GGMA}. 
In \cite{GGMA}, it is shown that the conditions for GEC on the wave equation \eqref{uW} can be described by $a \in C^2$ and the stabilization condition \eqref{dw_w}. 
The corresponding problem of the necessity of $b \in C^2$ for the dissipative wave equation \eqref{dw_w} will be considered in the future.

%
%
\section{Appendix}
%
%

\begin{lemma}\label{lemma_int_si0}
Let $\si_0(\eta)$ satisfy (A1) and $\si(\eta)$ be defined by \eqref{si(eta)}. 
Then there exists $\si_\infty \in \R$ such that 
\begin{equation}
  \int^\infty_{0} \frac{\si(\eta(s))}{1+s}\,ds = \si_\infty. 
\end{equation}
\end{lemma}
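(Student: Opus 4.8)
The plan is to prove convergence of the improper integral by the Cauchy criterion, using two structural facts: that $\si$ has zero mean (so its Fourier series \eqref{si(eta)} has no constant term, only genuinely oscillating modes) and that by (A3) the phase $\eta$ has a frequency bounded below. First I would record that, by (A1) and \eqref{FC}, the coefficients are absolutely summable, $A_1=\sum_{n=1}^\infty(|\al_n|+|\be_n|)<\infty$. The Weierstrass $M$-test then makes the series \eqref{si(eta)} uniformly convergent, so on any finite interval $[\tau_-,\tau_+]$ the function $\si(\eta(s))/(1+s)$ is a uniform limit of its partial sums (the extra factor $1/(1+s)$ is bounded there), and I may integrate term by term:
\[
  \int_{\tau_-}^{\tau_+}\frac{\si(\eta(s))}{1+s}\,ds
 =\sum_{n=1}^\infty\int_{\tau_-}^{\tau_+}
  \frac{\al_n\cos(n\pi\eta(s)/T)+\be_n\sin(n\pi\eta(s)/T)}{1+s}\,ds.
\]

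The main step is to bound each mode uniformly in $\tau_+$ by invoking the single‑frequency oscillatory estimate of Lemma \ref{Lemma_ossint}. For the $n$-th mode I would take $\phi(s)=\pi\eta(s)/T$ and $\psi\equiv 0$. Then (A3) supplies the hypotheses: since $\eta''>0$ the derivative $\eta'$ is increasing, so $\phi'(s)=\pi\eta'(s)/T\ge \pi\eta'(0)/T=:\phi_0>0$, and $\phi''(s)=\pi\eta''(s)/T\ge 0$, while $\psi'\equiv 0$ trivially satisfies $|\psi'|\le \Psi_0/(1+s)$ with, say, $\Psi_0=1$. Applying Lemma \ref{Lemma_ossint} with $t_0=\tau_-$ gives, for $k=1,2$,
\[
  \left|\int_{\tau_-}^{\tau_+}\frac{\cs_k(n\pi\eta(s)/T)}{1+s}\,ds\right|
  \le \frac{5}{n\phi_0(1+\tau_-)}.
\]
Summing against the coefficients and using $n\ge1$ together with \eqref{FC} yields
\[
  \left|\int_{\tau_-}^{\tau_+}\frac{\si(\eta(s))}{1+s}\,ds\right|
  \le \frac{5}{\phi_0(1+\tau_-)}\sum_{n=1}^\infty\frac{|\al_n|+|\be_n|}{n}
  \le \frac{5A_1}{\phi_0(1+\tau_-)}
  \le \frac{5A_1}{\phi_0}.
\]
The right-hand side tends to $0$ as $\tau_-\to\infty$, so writing $F(t):=\int_0^t \si(\eta(s))/(1+s)\,ds$ we get $|F(\tau_+)-F(\tau_-)|\to0$; hence $\lim_{t\to\infty}F(t)$ exists and is a finite real number, which we name $\si_\infty$. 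The same chain of inequalities, taken for arbitrary $\tau_\pm$, bounds every partial integral by $5A_1/\phi_0$, so it simultaneously yields $B_0<\infty$ in \eqref{B0}.

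I expect the only real difficulty to be bookkeeping rather than any deep cancellation argument. The two points to check carefully are that the lower bound $\phi_0=\pi\eta'(0)/T$ is genuinely uniform in $\tau_-$ — this is exactly where $\eta'(0)>0$ and the monotonicity of $\eta'$ from (A3) are essential, since a frequency degenerating to zero would destroy the estimate — and that the interchange of the infinite sum with the integral is licensed by the uniform convergence granted by \eqref{FC}. Beyond a single integration by parts per mode (already packaged inside Lemma \ref{Lemma_ossint}), no further oscillatory cancellation is required, because absolute summability of the Fourier coefficients controls the sum over frequencies outright.
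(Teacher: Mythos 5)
Your argument is correct, but it is not the route the paper takes. The paper proves the lemma from scratch in the appendix: after the substitution $\tau=\eta(s)$ it splits each mode $\int_{T_0}^{T_1}\cs_k(n\eta(s))(1+s)^{-1}\,ds$ into full half-periods plus two boundary pieces and telescopes the alternating contributions using the monotonicity of $\ze(\tau)=(1+\eta^{-1}(\tau))\,\eta'(\eta^{-1}(\tau))$, arriving at the modewise bound $6/(n\,\ze(\eta(T_0)-4\pi))$, which decays like $1/((1+T_0)\eta'(T_0))$ and thus sees the growth of the instantaneous frequency. Your application of Lemma \ref{Lemma_ossint} with $\psi\equiv 0$ and the crude uniform lower bound $\phi_0=\pi\eta'(0)/T$ (legitimate, since (A3) gives $\eta'' >0$ and hence $\eta'\ge \eta'(0)>0$) only yields decay like $1/(1+\tau_-)$, but for the two purposes the lemma serves --- existence of $\si_\infty$ via the Cauchy criterion and finiteness of $B_0$ in \eqref{B0} --- either rate suffices, so nothing is lost. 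Your route is shorter because it recycles the integration-by-parts machinery already set up for Proposition \ref{Prop1} (i); the paper's self-contained computation keeps the appendix independent of Section 3 and produces the sharper constant, while both arguments ultimately exploit only $\sum_n(|\al_n|+|\be_n|)/n<\infty$ after the (uniform-convergence-justified) term-by-term integration. Two housekeeping points: Lemma \ref{Lemma_ossint} asks for a \emph{positive} $\Psi_0$, so fixing $\Psi_0=1$ as you do is the right move; and there is no circularity in invoking Lemma \ref{Lemma_ossint} here even though Lemma \ref{lemma_int_si0} is cited earlier in the paper, because the proof of Lemma \ref{Lemma_ossint} does not rely on \eqref{B0}.
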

\begin{remark}
In the following proof, we require only 
$\sum_{n=1}^\infty(|\al_n|+|\be_n|)/n<\infty$ 
for the Fourier coefficient of $\si_0$; 
this is weaker than \eqref{FC}, which requires that the Fourier coefficients satisfy (A1). 
\end{remark}

\begin{proof}
Let us prove the following by restricting to the case $T=\pi$: 
\begin{align*}
  \lim_{T_0,T_1\to\infty}
  \int^{T_1}_{T_0}\frac{\si(\eta(s))}{1+s}\,ds
  =0. 
\end{align*}
For positive real numbers $T_0$ and $T_1$ satisfying $\eta^{-1}(4\pi) \le T_0<T_1$, we define $j_0, j_1 \in \Z$ and $\de_0,\de_1 \in [0,1]$ by 
\[
  j_0=\left[\frac{n\eta(T_0)}{2\pi}\right]+1,
  \;\;
  j_1=\left[\frac{n\eta(T_1)}{2\pi}\right], 
\]
\[
  \de_0
 =j_0-\frac{n\eta(T_0)}{2\pi}
 =1-\(\frac{n\eta(T_0)}{2\pi}-\left[\frac{n\eta(T_0)}{2\pi}\right]\)
\]
and
\[
  \de_1
 =\frac{n\eta(T_1)}{2\pi}-j_1
 =\frac{n\eta(T_1)}{2\pi}-\left[\frac{n\eta(T_1)}{2\pi}\right],
\]
where $[\;\;]$ denotes the floor function. 
Here we note that the following equalites holds: 
\[
  n\eta(T_0)
 =2\pi j_0-2\pi \de_0
  \;\;\text{ \textit{and} }\;\;
  n\eta(T_1)
 =2\pi j_1+2\pi \de_1. 
\]
Denoting 
\[
  \tau=\eta(s)
  \;\;\text{ \textit{and} }\;\;
  \ze(\tau)=\(1 + \eta^{-1}(\tau)\) \eta'\(\eta^{-1}(\tau)\),
\]
it follows that 
\[
  \frac{ds}{d\tau} = \frac{d}{d\tau} \eta^{-1}(\tau)
 = \frac{1}{\eta'(s)} 
 = \frac{1}{\eta'\(\eta^{-1}(\tau)\)}
 = \frac{1+\eta^{-1}(\tau)}{\ze(\tau)}
 = \frac{1+s}{\ze(\tau)}, 
\]
we have 
\begin{align*}
  \int^{T_1}_{T_0} \frac{\sin(n\eta(s))}{1+s}\,ds
&=\int^{\eta(T_1)}_{\eta(T_0)} 
  \frac{\sin(n\tau)}{\ze(\tau)}\,d\tau
 =\int^{n\eta(T_1)}_{n\eta(T_0)} 
  \frac{\sin(s)}{n \ze\(\frac{s}{n}\)}\,ds
\\
&=\int_{2\pi j_0-2\pi \de_0}^{2\pi j_0} 
    \frac{\sin(s)}{n \ze\(\frac{s}{n}\)}\,ds
 +\int_{2\pi j_0}^{2\pi j_1} 
    \frac{\sin(s)}{n \ze\(\frac{s}{n}\)}\,ds
 +\int_{2\pi j_1}^{2\pi j_1+2\pi \de_1} 
  \frac{\sin(s)}{n \ze\(\frac{s}{n}\)}\,ds
\\
&=I_{s,1} + I_{s,2} + I_{s,3}. 
\end{align*}
Since $\ze$ is monotone increasing, we have 
\begin{align*}
  |I_{s,1}|
&\le 
  \frac{1}{n \ze\(\frac{2\pi j_0-2\pi \de_0}{n}\)}
  \left|\int_{2\pi j_0-2\pi \de_0}^{2\pi j_0} \sin(s)\,ds\right|
\\
& =\frac{1}
  {n \ze\(\frac{2\pi}{n}\(\left[\frac{n\eta(T_0)}{2\pi}\right]+1\)
    -\frac{2\pi \de_0}{n}\)}
  \left|\int_{-2\pi \de_0}^{0} \sin(s)\,ds\right|
\\
&\le \frac{2}{n \ze\(\frac{2\pi}{n} \frac{n\eta(T_0)}{2\pi} -2\pi\)}
 =\frac{2}{n \ze\(\eta(T_0) -2\pi\)}, 
\end{align*}
\begin{align*}
  |I_{s,3}|
&\le 
  \frac{1}{n \ze\(\frac{2\pi j_1}{n}\)}
  \left|\int^{2\pi j_1+2\pi \de_1}_{2\pi j_1} \sin(s)\,ds\right|
 =\frac{1}{n \ze\(\frac{2\pi}{n}\left[\frac{n\eta(T_1)}{2\pi}\right]\)}
  \left|\int^{2\pi \de_1}_{0} \sin(s)\,ds\right|
\\
&\le \frac{2}{n \ze\(\frac{2\pi}{n}\(\frac{n\eta(T_1)}{2\pi} -1\)\)}
 \le \frac{2}{n \ze\(\eta(T_1) -2\pi\)}
 \le \frac{2}{n \ze\(\eta(T_0) -2\pi\)}
\end{align*}
and
\begin{align*}
  |I_{s,2}|
&=I_{s,2}
 =\sum_{k=j_0}^{j_1-1}\(
  \int_{2\pi k}^{2\pi k + \pi}
    \frac{\sin(s)}{n\ze\(\frac{s}{n}\)}\,ds
 +\int_{2\pi k + \pi}^{2\pi (k+1)}
    \frac{\sin(s)}{n\ze\(\frac{s}{n}\)}\,ds
  \)
\\
&=\sum_{k=j_0}^{j_1-1}\(
  \frac{2}{n\ze\(\frac{2\pi k}{n}\)}
 -\frac{2}{n\ze\(\frac{2\pi (k+1)}{n}\)}\)
 =\frac{2}{n\ze\(\frac{2\pi j_0}{n}\)}
 -\frac{2}{n\ze\(\frac{2\pi j_1}{n}\)}
\\
&\le \frac{2}{n\ze\(\frac{2\pi j_0}{n}\)}
 =\frac{2}{n\ze\(\frac{2\pi}{n}\(\left[\frac{n\eta(T_0)}{2\pi}\right]+1\)\)}
\\
&\le
  \frac{2}{n\ze\(\frac{2\pi}{n} \frac{n\eta(T_0)}{2\pi}\)}
= \frac{2}{n\ze\(\eta(T_0)\)}. 
\end{align*}
Hence we have 
\begin{align*}
  |I_{s,1}+I_{s,2}+I_{s,3}|
& \le
  \frac{2}{n \ze\(\eta(T_0) -2\pi\)}
 +\frac{2}{n\ze\(\eta(T_0)\)}
 +\frac{2}{n \ze\(\eta(T_0) -2\pi\)}
\\
 &\le
  \frac{6}{n\ze\(\eta(T_0) -4\pi\)}.
\end{align*}
Analogously, denoting
\begin{align*}
  \int^{T_1}_{T_0} \frac{\cos(n\eta(s))}{1+s}\,ds
&=\int_{2\pi j_0-2\pi \de_0}^{2\pi j_0 + \frac{\pi}{2}} 
    \frac{\cos(s)}{n \ze\(\frac{s}{n}\)}\,ds
 +\int_{2\pi j_0+\frac{\pi}{2}}^{2\pi (j_1-1) +\frac{\pi}{2}} 
    \frac{\cos(s)}{n \ze\(\frac{s}{n}\)}\,ds
 +\int_{2\pi (j_1-1) + \frac{\pi}{2}}^{2\pi j_1+2\pi \de_1} 
  \frac{\cos(s)}{n \ze\(\frac{s}{n}\)}\,ds
\\
&=I_{c,1} + I_{c,2} + I_{c,3},
\end{align*}
we have 
\begin{align*}
  |I_{c,1}|
&\le 
  \frac{1}{n \ze\(\frac{2\pi j_0-2\pi \de_0}{n}\)}
  \left|\int_{2\pi j_0-2\pi \de_0}^{2\pi j_0 + \frac{\pi}{2}} 
  \cos(s)\,ds\right|
 \le 
  \frac{2}{n \ze\(\frac{2\pi j_0-2\pi \de_0}{n}\)}
 =\frac{2}{n \ze\(\eta(T_0) -2\pi\)}, 
\end{align*}
\begin{align*}
  |I_{c,3}|
&\le 
  \frac{1}{n \ze\(\frac{2\pi (j_1-1) + \frac{\pi}{2}}{n}\)}
  \left|\int_{2\pi (j_1-1) + \frac{\pi}{2}}^{2\pi j_1+2\pi \de_1} 
  \cos(s)\,ds\right|
 \le \frac{2}{n \ze\(\frac{2\pi (j_1-1) + \frac{\pi}{2}}{n}\)}
\\
&=\frac{2}{n \ze\(\frac{2\pi}{n}\left[\frac{n\eta(T_1)}{2\pi}\right]
  -\frac{3\pi}{2n}\)}
\le \frac{2}{n \ze\(
  \frac{2\pi}{n}\(\frac{n\eta(T_1)}{2\pi}-1\)-\frac{3\pi}{2n}\)}
\\
&= \frac{2}{n \ze\(\eta(T_1)-\frac{7\pi}{2n}\)}
 \le \frac{2}{n \ze\(\eta(T_0)-4\pi\)}
\end{align*}
and
\begin{align*}
  |I_{c,2}|
&=-I_{c,2}
 =\int_{2\pi j_0+\frac{\pi}{2}}^{2\pi (j_1-1) +\frac{\pi}{2}} 
    \frac{-\cos(s)}{n \ze\(\frac{s}{n}\)}\,ds
\\
&=\sum_{k=j_0}^{j_1-2}
  \(
  \int_{2\pi k+\frac{\pi}{2}}^{2\pi k + \frac{3\pi}{2}} 
    \frac{-\cos(s)}{n \ze\(\frac{s}{n}\)}\,ds
 +\int_{2\pi k + \frac{3\pi}{2}}^{2\pi (k+1) +\frac{\pi}{2}} 
    \frac{-\cos(s)}{n \ze\(\frac{s}{n}\)}\,ds
  \)
\\
&\le \sum_{k=j_0}^{j_1-2}
  \(
  \frac{1}{n \ze\(\frac{2\pi k+\frac{\pi}{2}}{n}\)}
  \int_{2\pi k+\frac{\pi}{2}}^{2\pi k + \frac{3\pi}{2}} 
    -\cos(s)\,ds
 +\frac{1}{n \ze\(\frac{2\pi (k+1) +\frac{\pi}{2}}{n}\)}
  \int_{2\pi k + \frac{3\pi}{2}}^{2\pi (k+1) +\frac{\pi}{2}} 
    -\cos(s) \,ds
  \)
\\
&\le \sum_{k=j_0}^{j_1-2}
  \(
  \frac{2}{n \ze\(\frac{2\pi k+\frac{\pi}{2}}{n}\)}
 -\frac{2}{n \ze\(\frac{2\pi (k+1) +\frac{\pi}{2}}{n}\)}
  \)
 =\frac{2}{n \ze\(\frac{2\pi j_0+\frac{\pi}{2}}{n}\)}
 -\frac{2}{n \ze\(\frac{2\pi (j_1-1) +\frac{\pi}{2}}{n}\)}
\\
&\le \frac{2}{n \ze\(\frac{2\pi j_0+\frac{\pi}{2}}{n}\)}
 =\frac{2}{n \ze\(\frac{2\pi}{n}\(\left[\frac{n\eta(T_0)}{2\pi}\right]+1\) + \frac{\pi}{2n}\)}
\\
&\le 
  \frac{2}{n \ze\(\eta(T_0) + \frac{\pi}{2n}\)}
  \le 
  \frac{2}{n \ze\(\eta(T_0)\)}.
\end{align*}
Hence we have 
\begin{align*}
  |I_{c,1}+I_{c,2}+I_{c,3}|
&  \le \frac{1}{n}\(
  \frac{2}{n\ze\(\eta(T_0) -2\pi\)}
 +\frac{2}{n\ze\(\eta(T_0)\)}
 +\frac{2}{n\ze\(\eta(T_0)-4\pi\)}
  \)
\\
&\le \frac{6}{n\ze\(\eta(T_0)-4\pi\)}. 
\end{align*}
Therefore, we have 
\begin{align*}
  \left|\int^{T_1}_{T_0} \frac{\si(\eta(s))}{1+s}\,ds\right|
&\le \sum_{n=1}^\infty
  \(|\al_n|\left|\int_{T_0}^{T_1}\frac{\cos(n\eta(s))}{1+s}\,ds\right|
   +|\be_n|\left|\int_{T_0}^{T_1}\frac{\sin(n\eta(s))}{1+s}\,ds\right|\) 
\\
&\le \frac{6}{\ze\(\eta(T_0)-4\pi\)} \sum_{n=1}^\infty \frac{|\al_n|+|\be_n|}{n}
  \le \frac{6 A_1}{\ze\(\eta(T_0)-4\pi\)}
\\
& \to 0 \;\;(T_0 \to \infty).
\end{align*}
\end{proof}

\begin{lemma}\label{lemm-Vol} 
The solution to the Volterra integral equation 
\begin{equation}\label{Vol}
  w(t) = p(t) + \int^t_0 q(t,\tau) w(\tau)\,d\tau
\end{equation}
is formally represented as follows: 
\begin{align*}
  w(t)
 &=p(t) + \int^t_0 q(t,\tau_1)p(\tau_1)\,d\tau_1
\\
 &\;\; +\sum_{l=2}^\infty 
  \int^t_0 q(t,\tau_1)\int^{\tau_1}_0 q(\tau_1,\tau_2)\cdots
  \int^{\tau_{l-1}}_0 q(\tau_{l-1},\tau_l)p(\tau_l)\,d\tau_l
  \cdots d\tau_1. 
\end{align*}
\end{lemma}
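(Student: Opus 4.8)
The plan is to establish the representation by successive substitution, recognizing the right-hand side as the Neumann series generated by iterating the kernel $q$. Because the statement asks only for a \emph{formal} identity, convergence need not be addressed here; the task reduces to showing that each substitution peels off exactly one further term of the series while leaving a remainder of the expected nested-integral shape.

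First I would rename the integration variable in \eqref{Vol} to $\tau_1$ and substitute the equation for $w(\tau_1)$ back into the integral. This single substitution produces $p(t)$, the one-fold term $\int_0^t q(t,\tau_1)\,p(\tau_1)\,d\tau_1$, and a remainder $\int_0^t q(t,\tau_1)\int_0^{\tau_1} q(\tau_1,\tau_2)\,w(\tau_2)\,d\tau_2\,d\tau_1$, which already exhibits the characteristic decreasing limits $0\le\tau_2\le\tau_1\le t$.

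Next I would run an induction on the number $L$ of substitutions. The inductive hypothesis reads
\[
  w(t) = p(t) + \sum_{l=1}^{L}\int_0^t q(t,\tau_1)\cdots\int_0^{\tau_{l-1}} q(\tau_{l-1},\tau_l)\,p(\tau_l)\,d\tau_l\cdots d\tau_1 + R_L(t),
\]
with remainder given by the $L$-fold iterated kernel acting on $w$,
\[
  R_L(t) = \int_0^t q(t,\tau_1)\cdots\int_0^{\tau_{L-1}} q(\tau_{L-1},\tau_L)\,w(\tau_L)\,d\tau_L\cdots d\tau_1.
\]
The inductive step is simply to replace $w(\tau_L)$ inside $R_L$ by the right-hand side of \eqref{Vol} at argument $\tau_L$: the $p(\tau_L)$ part supplies the $(L+1)$-fold term appended to the partial sum, while the integral part becomes $R_{L+1}$. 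The nested limits $0\le\tau_{L+1}\le\tau_L\le\cdots\le\tau_1\le t$ are inherited directly from the Volterra structure, so no interchange of integration order is required.

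Finally, passing $L\to\infty$ formally discards $R_L(t)$ and identifies the limit of the partial sums with the claimed series. The only delicate point is the bookkeeping of the iterated integration domains, which is routine once the induction is set up; there is no genuine obstacle, precisely because the identity is formal. I would add that the rigorous justification for dropping $R_L$ and summing the series in the application is supplied by the factorial bound \eqref{intpqj} in Lemma \ref{lemma-est_vjk}, where the $l$-fold integral of $|q|$ against $|p_k|$ is controlled by $((2l)!)^{-1}(\sqrt{\om_1/\om_0}\,|\xi|\,t)^{2l}$; this forces $R_L\to 0$ in the regime $|\xi|\,t\le N$ and makes the formal series genuinely convergent there.
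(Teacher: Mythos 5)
Your proposal is correct: the successive-substitution/induction argument you describe is precisely the ``direct calculation'' that the paper's one-line proof alludes to, so you are taking essentially the same (standard Neumann-series) route, just spelled out in full. Your closing remark correctly identifies that the rigorous convergence in the application is supplied by the factorial bound \eqref{intpqj}, which is consistent with how the lemma is used in the proof of Lemma \ref{lemma-est_vjk}.
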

\begin{proof}
The proof can be verified by direct calculations. 
\end{proof}


\end{document}